\newcommand{\geom}[2]{{#1}\otimes_{#2}\overline{k}}
\DeclareMathOperator{\GL}{GL}
\DeclareMathOperator{\Tr}{Tr}
\DeclareMathOperator{\Hom}{Hom}
\DeclareMathOperator{\Tor}{Tor}
\DeclareMathOperator{\Aut}{Aut}
\DeclareMathOperator{\Sym}{Sym}
\DeclareMathOperator{\Res}{Res}
\DeclareMathOperator{\Pic}{Pic}
\DeclareMathOperator{\Spec}{Spec}
\DeclareMathOperator{\Frac}{Frac}
\DeclareMathOperator{\id}{id}
\DeclareMathOperator{\ev}{ev}
\DeclareMathOperator{\codim}{codim}
\DeclareMathOperator{\coker}{coker}
\DeclareMathOperator{\Frob}{Frob}
\DeclareMathOperator{\leng}{leng}
\DeclareMathOperator{\vol}{vol}
\DeclareMathOperator{\val}{val}
\DeclareMathOperator{\Gal}{Gal}
\def\AA{\mathbb{A}}
\def\FF{\mathbb{F}}
\def\GG{\mathbb{G}}
\def\OO{\mathbb{O}}
\def\PP{\mathbb{P}}
\def\ZZ{\mathbb{Z}}
\def\bR{\mathbf{R}}
\def\bM{\mathbf{M}}
\def\bN{\mathbf{N}}
\def\calA{\mathcal{A}}
\def\calB{\mathcal{B}}
\def\calE{\mathcal{E}}
\def\calF{\mathcal{F}}
\def\calG{\mathcal{G}}
\def\calK{\mathcal{K}}
\def\calL{\mathcal{L}}
\def\calM{\mathcal{M}}
\def\calN{\mathcal{N}}
\def\calO{\mathcal{O}}
\def\calP{\mathcal{P}}
\def\calQ{\mathcal{Q}}
\def\tilY{\widetilde{Y}}
\def\tilU{\widetilde{U}}
\def\tili{\widetilde{\iota}}
\def\tilR{\widetilde{R}}
\def\tila{\widetilde{a}}
\def\tilb{\widetilde{b}}
\def\frgl{\mathfrak{gl}}
\def\frh{\mathfrak{h}}
\def\frs{\mathfrak{s}}
\def\fru{\mathfrak{u}}
\def\frN{\mathfrak{N}}
\def\Ug{\textup{U}}
\def\Sg{\textup{S}}
\def\Hg{\textup{H}}
\def\Ql{\overline{\mathbb{Q}}_{\ell}}
\def\Nm{\textup{Nm}}
\def\one{\mathbf{1}}
\def\OI{\mathbf{O}}
\def\Mloc{\calM^{\textup{loc}}}
\def\Nloc{\calN^{\textup{loc}}}
\def\sMloc{M^{\textup{loc}}}
\def\sNloc{N^{\textup{loc}}}
\def\isom{\stackrel{\sim}{\to}}
\def\Quot{\textup{Quot}}
\def\Hilb{\textup{Hilb}}
\def\char{\textup{char}}
\def\Tot{\textup{Tot}}
\def\cPic{\overline{\calP\textup{ic}}}
\def\Ah{\calA^{\textup{int}}}
\def\Ared{\calA^{\heartsuit}}
\def\Adel{\calA^{\leq\delta}}
\def\Bh{\calB^{\times}}
\def\Mh{\calM^{\textup{int}}}
\def\bMh{\overline{\calM}^{\textup{int}}}
\def\bNh{\overline{\calN}^{\textup{int}}}
\def\Nh{\calN^{\textup{int}}}
\def\MHit{\calM^{\textup{Hit}}}
\def\NHit{\calN^{\textup{Hit}}}
\def\Mdel{\calM^{\leq\delta}}
\def\Ndel{\calN^{\leq\delta}}
\def\Asm{\calA^{\textup{sm}}}
\def\Msm{\calM^{\textup{sm}}}
\def\Nsm{\calN^{\textup{sm}}}
\def\Minf{\calM^{\infty}}
\def\Ninf{\calN^{\infty}}
\def\Ainf{\calA^{\infty}}
\def\AJ{\textup{AJ}}
\def\fh{f^{\textup{int}}}
\def\gh{g^{\textup{int}}}
\def\fsm{f^{\textup{sm}}}
\def\gsm{g^{\textup{sm}}}
\def\fdel{f^{\leq\delta}}
\def\gdel{g^{\leq\delta}}
\def\finf{f^{\infty}}
\def\ginf{g^{\infty}}
\def\finfdel{f^{\infty,\leq\delta}}
\def\ginfdel{g^{\infty,\leq\delta}}
\def\jsm{j^{\textup{sm}}}
\def\div{\textup{div}}
\def\red{\textup{red}}
\def\ellp{\textup{ell}}
\def\Disc{\textup{Disc}}
\def\Art{\textup{Art}}
\def\abel{\textup{ab}}
\theoremstyle{plain}
\newtheorem{theorem}[subsubsection]{Theorem}
\newtheorem{lemma}[subsubsection]{Lemma}
\newtheorem{cor}[subsubsection]{Corollary}
\newtheorem{conj}[subsubsection]{Conjecture}
\newtheorem{prop}[subsubsection]{Proposition}
\newtheorem*{claim}{Claim}
\newtheorem*{mthloc}{Main Theorem (Local Part)}
\newtheorem*{mthglob}{Main Theorem (Global Part)}
\theoremstyle{definition}
\newtheorem{defn}[subsubsection]{Definition}
\theoremstyle{remark}
\newtheorem{remark}[subsubsection]{Remark}
\numberwithin{equation}{subsection}
\title[Fundamental Lemma of Jacquet-Rallis]{The fundamental lemma of Jacquet-Rallis in positive characteristics}
\author{Zhiwei Yun}
\address{Current Address: Institute for Advanced Study}
\email{zyun@math.ias.edu}
\date{December 2008; Revise October 2009}
\subjclass[2000]{Primary 14H60, 22E35; Secondary 14F20}
\keywords{Fundamental Lemma, Hitchin moduli stack, ortibal integrals}
\begin{document}

\begin{abstract}
We prove both the group version and the Lie algebra version of the Fundamental Lemma appearing in a relative trace formula of Jacquet-Rallis in the function field case when the characteristic is greater than the rank of the relevant groups.
\end{abstract}

\maketitle

\section{Introduction}

\subsection{The conjecture of Jacquet-Rallis and its variant}

In the paper \cite{JR}, Jacquet and Rallis proposed an approach to the Gross-Prosad conjecture for unitary groups using relative trace formula. In establishing the relative trace formula, they needed a form of the Fundamental Lemma comparing the orbital integrals of the standard test functions on the symmetric space $\GL_n(E)/\GL_n(F)$ and on the unitary group $\Ug_n(F)$, here $E/F$ is an unramified extension of the local field $F$ with odd residue characteristic. They explicitly stated (up to sign) a Lie algebra version of this Fundamental Lemma as a conjecture and verified it for $n\leq3$ by explicit computation. Following this idea, Wei Zhang \cite{Z} stated the Lie group version of this Fundamental Lemma as a conjecture and verified it for $n\leq3$.

Let $\sigma$ be the Galois involution of $E$ fixing $F$. Let $\eta_{E/F}:F^\times\to\{\pm1\}$ be the quadratic character associated to the extension $E/F$. Let $\Sg_n(F)$ be the subset of $\GL_n(E)$ consisting of $A$ such that $A\sigma(A)=1$; let $\Ug_n(F)$ be the unitary group associated with the Hermitian space $E^n$ with trivial disciminant. We also need the Lie algebra counterparts of the above spaces. Let $\frs_n(F)$ be the set of $n$-by-$n$ matrices with entries in $E^{-}$ (purely imaginary elements in the quadratic extension $E/F$); let $\fru_n(F)$ be the set of $n$-by-$n$ skew-Hermitian matrices with entries in $E$ (the Hermitian form on $E^n$ has trivial discriminant). For a subset of $K\subset\frgl_n(E)$, let $\one_K$ deonte the characteristic function of $K$. The two versions of the Jacquet-Rallis conjecture are the following identities of orbital integrals:

\begin{conj}\label{cj:OI}
\begin{enumerate}
\item []
\item (Jacquet-Rallis, \cite{JR}) For {\em strongly regular semisimple elements} (see Definition \ref{def:rs}) $A\in\frs_n(F)$ and $A'\in\fru_n(F)$ that {\em match each other} (see Definition \ref{def:match}), we have:
\begin{equation}\label{eq:OI}
\OI_{A}^{\GL_{n-1},\eta}(\one_{\frs_n(\calO_F)})=(-1)^{v(A)}\OI_{A'}^{\Ug_{n-1}}(\one_{\fru_n(\calO_F)}).
\end{equation}

\item (Wei Zhang) For strongly regular semisimple elements $A\in\Sg_n(F)$ and $A'\in\Ug_n(F)$ that match each other (same notion as above), we have:
\begin{equation}\label{eq:gpOI}
\OI_{A}^{\GL_{n-1},\eta}(\one_{\Sg_n(\calO_F)})=(-1)^{v(A)}\OI_{A'}^{\Ug_{n-1}}(\one_{\Ug_n(\calO_F)}).
\end{equation}
\end{enumerate}
Here, for elements $A,A'\in\frgl_n(E)$, and smooth compactly supported functions $f,f'$ on $\frgl_n(E)$,
\begin{eqnarray*}
\OI_{A}^{\GL_{n-1},\eta}(f)&=&\int_{\GL_{n-1}(F)}f(g^{-1}Ag)\eta_{E/F}(\det(g))dg\\
\OI_{A'}^{\Ug_{n-1}}(f')&=&\int_{\Ug_{n-1}(F)}f'(g^{-1}A'g)dg
\end{eqnarray*}
For the definition of $v(A)$, see Definition \ref{def:v}.
\item In the above two situations, if $A$ does not match any $A'$, then the LHS of \eqref{eq:OI} or \eqref{eq:gpOI} is zero.
\end{conj}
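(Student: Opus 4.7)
The plan is to adapt Ngô's Hitchin-theoretic strategy for the endoscopic fundamental lemma to the Jacquet-Rallis situation. First I would globalize by working over a smooth projective curve $X$ over a finite field, with a chosen étale double cover $X'\to X$ playing the role of the extension $E/F$. Replacing $\frs_n$ (resp.\ $\fru_n$), or $\Sg_n$ (resp.\ $\Ug_n$), by their geometric analogues I would construct two Hitchin-type moduli stacks $\calM$ and $\calN$ with maps $f:\calM\to\calA$ and $g:\calN\to\calA$ to a common ``Hitchin base'' $\calA$ of characteristic data on $X$. The point is that matching local elements correspond to the same $a\in\calA$, while the $\eta_{E/F}$-twisted $\GL_{n-1}$-orbital integrals and the $\Ug_{n-1}$-orbital integrals will appear as local factors in a product formula for $|\calM_a(\FF_q)|$ and $|\calN_a(\FF_q)|$.

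The next step is this product formula: for each $a\in\calA(\FF_q)$, the $\eta$-twisted count on $\calM_a$ and the ordinary count on $\calN_a$ factor as a product over $x\in X(\overline{\FF}_q)$ of the local orbital integrals appearing in \eqref{eq:OI}/\eqref{eq:gpOI}. By Grothendieck-Lefschetz this reduces the conjecture to the identity
\[
\Tr(\Frob_a,(\bR f_*\Ql)_a^{\eta})=(-1)^{v(a)}\Tr(\Frob_a,(\bR g_*\Ql)_a)
\]
for every strongly regular semisimple $a\in\calA(\FF_q)$, where the $\eta$ superscript denotes the isotypic component for the natural action of the symmetry group coming from $X'/X$, and $v(a)$ is the globalization of the invariant $v(A)$ of Definition \ref{def:v}. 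Part~(3) of the conjecture translates to the statement that, when $\calN_a$ is empty, the $\eta$-isotypic part of $(\bR f_*\Ql)_a$ must also vanish.

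The heart of the argument is a support theorem in the style of Ngô: on the elliptic open $\calA^{\ellp}\subset\calA$, every simple perverse constituent of the $\eta$-isotypic part of $\bR f_*\Ql$ and of $\bR g_*\Ql$ has full support. Granting this, it suffices to compare the two sides on a dense open where both Hitchin fibers are compactified Picard stacks of the relevant spectral curves; on such a locus one identifies the two Picards directly, which yields the required equality up to the global sign $(-1)^{v(a)}$. Specializing the resulting sheaf-theoretic identity to an $\FF_q$-point $a$ that encodes a prescribed local datum $A$ at a chosen place $v\in|X|$ then produces the local identities \eqref{eq:OI} and \eqref{eq:gpOI}, as well as the vanishing in part~(3).

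The main obstacle will be the support theorem itself. Following Ngô, it reduces to a $\delta$-regularity inequality bounding the codimension of the strata $\Adel\subset\calA$ on which the ``defect'' of the Hitchin fiber is at least $\delta$. Proving such an inequality in the Jacquet-Rallis setting requires careful dimension estimates on the affine Springer fibers attached to the symmetric pair $(\GL_n(E),\GL_n(F))$ and to the unitary group $\Ug_n$, together with verification that $\calM$ and $\calN$ are smooth of the expected dimension over the strongly regular locus. The hypothesis $\char(F)>n$ enters here to guarantee separability of characteristic polynomials and smoothness of the relevant centralizers, so that Jordan-decomposition arguments familiar from characteristic zero transport to the positive-characteristic setting of this paper.
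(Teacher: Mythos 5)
Your broad strategy---globalize over a curve with an \'etale double cover, build Hitchin-type moduli for both sides, express orbital integrals as local factors of a product formula, and reduce the fundamental lemma to a sheaf-theoretic identity over the Hitchin base---is the correct outline and matches the paper. However, there is a genuine gap at the step you call the heart of the argument, and you skip several necessary reductions.

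You propose to prove a Ng\^o-style support theorem. The paper does something considerably more elementary and genuinely different. Its moduli stacks $\calM$ and $\calN$ classify not just Higgs pairs but quadruples $(\calE,\phi,\lambda,\mu)$ and $(\calE',h,\phi',\mu')$, with $\lambda,\mu,\mu'$ globalizing the distinguished vector $e_0$ and covector $e_0^*$. The Hitchin base is therefore $\calA\times\calB$, with $\calB$ tracking the second set of invariants $b_0,\dots,b_{n-1}$. These extra parameters are what make $\Mh_i$ and $\Nh$ \emph{smooth} (Propositions \ref{p:smGL}, \ref{p:smU}) and, crucially, make $\fh_i$ and $\gh$ \emph{small} over $\Adel\times\Bh$ (Proposition \ref{p:small}): Ng\^o's codimension bound on $\calA^{\geq\delta}$ is combined with the observation that for each fixed $a$ the locus of $b$ where the fiber has dimension $j$ has codimension at least $j+1$ in $\calB$. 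Smallness with smooth source forces the pushforwards to be middle extensions automatically, so the comparison reduces to checking an explicit ``binomial expansion'' on symmetric powers of the spectral curve over the dense open $\Asm\times\Bh$ (Lemma \ref{l:bin}). No support theorem---no cap product, no polarizability, no freeness argument---is needed. Your proposal drops the $\calB$ variables entirely (you map only to ``a common Hitchin base $\calA$''), which is precisely the ingredient that buys smallness. Without them the Hitchin maps are not small, you would truly need a support theorem, and you would have to redevelop the entire $\delta$-regularity and Picard-action machinery in this relative setting from scratch. That may be feasible, but it is a far heavier program than the one the paper executes.

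A few further omissions. The group identity \eqref{eq:gpOI} does not need its own geometry: it is deduced from the Lie algebra identity \eqref{eq:OI} by producing an $\calO_F$-algebra isomorphism between the ``multiplicative'' spectral ring $\bR_a$ and an ``additive'' $R_{\tila}$ compatible with the pairings (Proposition \ref{p:gptoalg}). Part~(3) is not obtained by showing an $\eta$-isotypic sheaf vanishes; it is an elementary lattice-duality cancellation valid for all $F$ (Lemma \ref{l:cancel}), needed before the geometric machinery applies, since the global theorem only covers the matching case $\eta_{E/F}(\Delta_{a,b})=1$. Finally, $\Mh_i$ is visibly smooth only for non-extremal degrees $i$; the paper enlarges $D_0$ and compares moduli for two choices of $D_0$ to handle the remaining degrees, an issue your sketch would also face.
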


Let $\Hg_n(F)$ be the set of Hermitian matrices in $\GL_{n}(E)$ and $\frh_n(F)$ be the set of $n$-by-$n$ Hermitian matrices in $\frgl_n(E)$ with respect to the chosen Hermitian form with triival disciminant. We have the following variant of the Conjecture \ref{cj:OI}:
\begin{conj}\label{cj:OI'}
\begin{enumerate}
\item []
\item For strongly regular semisimple elements $A\in\frgl_n(F)$ and $A'\in\frh_n(F)$ that {\em match each other} (in the similar sense as Definition \ref{def:match}), we have
\begin{equation}\label{eq:algOI'}
\OI_{A}^{\GL_{n-1},\eta}(\one_{\frgl_n(\calO_F)})=(-1)^{v(A)}\OI_{A'}^{\Ug_{n-1}}(\one_{\frh_n(\calO_F)}).
\end{equation}
\item For strongly regular semisimple elements $A\in\GL_n(F)$ and $A'\in\Hg_n(F)$ that match each other, we have
\begin{equation}\label{eq:gpOI'}
\OI_{A}^{\GL_{n-1},\eta}(\one_{\GL_n(\calO_F)})=(-1)^{v(A)}\OI_{A'}^{\Ug_{n-1}}(\one_{\Hg_n(\calO_F)}).
\end{equation}
\item In the above two situations, if $A$ does not match any $A'$, then the LHS of \eqref{eq:algOI'} or \eqref{eq:gpOI'} is zero.
\end{enumerate}
\end{conj}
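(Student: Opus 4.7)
The plan is to follow the Hitchin-theoretic template pioneered by Ng\^o for the standard Fundamental Lemma, adapted to the relative symmetric-space setting of Jacquet-Rallis: globalize the local orbital integrals on both sides of \eqref{eq:algOI'} and \eqref{eq:gpOI'} into point counts on Higgs-type moduli stacks over a global curve, then compare them via perverse sheaves on a common Hitchin base. Concretely, fix a smooth projective curve $X$ over $\FF_q$ with an \'etale double cover $\pi\colon\tilY\to X$ whose localization at a chosen place recovers the given extension $E/F$. I would introduce a moduli stack $\calM$ of Higgs pairs of type $(n,n-1)$ on $X$, encoding the $(\GL_{n-1},\GL_n)$-action on $\frgl_n$, and a moduli stack $\calN$ of the corresponding Hermitian Higgs data on $\tilY$. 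Both carry Hitchin maps $f\colon\calM\to\calA$ and $g\colon\calN\to\calA$ to a common base $\calA$ parameterizing characteristic polynomials, and the matching relation of Definition~\ref{def:match} globalizes to an $\calA$-valued identification of the strongly regular semisimple loci of the fibers.

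The first technical step is a product formula: for each $a\in\calA(\FF_q)$ in the anisotropic locus, the fibers $\calM_a$ and $\calN_a$ decompose, up to a torsor for a Picard stack, as products over closed points $x\in X$ of local affine-Springer-type strata. Combined with Weil's adelic interpretation of integrals over $\GL_{n-1}(F)\backslash\GL_{n-1}(\AA_F)$, this reformulates $|\calM_a(\FF_q)|_{\chi_\eta}$---the count of $\FF_q$-points weighted by the sign local system $\chi_\eta$ on $\calM$ induced by $\eta_{E/F}$---as a product of local orbital integrals that includes the left-hand side of \eqref{eq:algOI'} at the designated place, and similarly for $|\calN_a(\FF_q)|$ and the right-hand side. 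By tuning $a$ so that all other local factors are trivial, the conjectural identities \eqref{eq:algOI'} and \eqref{eq:gpOI'} each reduce to one instance of the global identity
\begin{equation*}
|\calM_a(\FF_q)|_{\chi_\eta}=(-1)^{v(a)}\,|\calN_a(\FF_q)|.
\end{equation*}
The group version \eqref{eq:gpOI'} is handled in parallel by replacing the Lie-algebra Hitchin stack with its multiplicative (group-valued) analogue.

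To prove this global identity I aim to establish an isomorphism of shifted perverse sheaves on a smooth open substack $\Asm\subset\calA$,
\begin{equation*}
\bR\fsm_!(\chi_\eta)\simeq\bR\gsm_!(\Ql)\otimes\calL_v,
\end{equation*}
where $\calL_v$ is an explicit rank-one local system whose Frobenius trace yields $(-1)^{v(a)}$. By Ng\^o's support theorem, provided $f$ and $g$ are proper after $\delta$-truncation and satisfy the $\delta$-regularity codimension bound on their $\delta$-strata, every simple summand of either side is determined by its restriction to any dense open of $\calA$; one can then check the isomorphism over the locus where the spectral curves are smooth, where the Beauville--Narasimhan--Ramanan correspondence reduces both sides to comparisons on Jacobians and Pryms of the spectral cover. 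The main obstacle I expect is the verification of $\delta$-regularity for these new symmetric-space and Hermitian Hitchin fibrations, together with a precise analysis of how $\chi_\eta$ couples to the Picard-stack action so as to produce exactly the sign on the unitary side; the hypothesis $\char>n$ enters to ensure separability of the spectral covers and to keep the Hitchin moduli well-behaved. Part~(3) of the conjecture, asserting vanishing of the left-hand side in the absence of a match, should translate geometrically into the emptiness of $\calN_a$ together with a $\chi_\eta$-induced cancellation on $\calM_a$, and is expected to follow from the same sheaf-theoretic comparison.
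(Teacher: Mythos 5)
Your proposal is a genuinely different route from the paper's, and in this particular case it misses the decisive simplification. The paper does \emph{not} prove Conjecture~\ref{cj:OI'} by building a separate Hitchin-theoretic apparatus for the pair $(\frgl_n,\frh_n)$. Instead it observes (crediting Xinyi Yuan) that multiplication by a purely imaginary unit $\jmath\in E^-\cap\calO_E^\times$ exchanges $\frgl_n(F)\leftrightarrow\frs_n(F)$ and $\frh_n(F)\leftrightarrow\fru_n(F)$, sending $\frgl_n(\calO_F)$ to $\frs_n(\calO_F)$ and $\frh_n(\calO_F)$ to $\fru_n(\calO_F)$ and intertwining the $\GL_{n-1}(F)$- and $\Ug_{n-1}(F)$-actions; hence \eqref{eq:algOI'} is literally equivalent to \eqref{eq:OI}. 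Part (2) then follows from part (1) by the same $\bR_a\cong R_{\tilde a}$ argument used in Proposition~\ref{p:gptoalg} (embed the degree-$n$ subscheme of $\Ug_{\calO_E/\calO_F}(1)$ into $\AA^1$ and untwist by $\jmath$), and part (3) follows from the cancellation argument of Lemma~\ref{l:cancel}. In other words, the paper's proof of this statement is a short reduction to the already-established Conjecture~\ref{cj:OI}, and only \emph{that} conjecture requires the full Hitchin machinery.

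Your plan to globalize $\frgl_n$ versus $\frh_n$ directly would, if carried out, require re-proving all of the smoothness, smallness, and product-formula results for a new pair of moduli stacks. Note also a mismatch with the paper's actual geometry: the paper's stack $\calM$ twists the Higgs field by $\calL(D)$ precisely to encode the $E^-$-structure of $\frs_n$, and this $\calL$-twist is what makes the spectral curve $Y_a$ admit the free involution $\sigma(t)=-t$. For the $\frgl_n/\frh_n$ pair the twist disappears on the $\GL$ side, so the two spectral-curve constructions would no longer match as cleanly; repairing this is exactly what the multiplication-by-$\jmath$ trick does in one line at the level of lattices. Finally, your sketch treats \eqref{eq:gpOI'} by ``passing to the multiplicative analogue'' of the Hitchin stack, which is a much heavier undertaking than the paper's reduction to the Lie-algebra case, and it leaves the vanishing statement (3) unaddressed except as an expectation. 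The takeaway is that, for this variant conjecture, the correct move is the elementary change of variables, not a parallel geometric argument.
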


\subsection{Main results} The main purpose of the paper is to prove the above Conjectures in the case $F$ is a local function field (i.e., of the form $k((\varpi))$ for some finite field $k$) and $\char(F)>n$ (see Corollary \ref{c:main}). 

We first do some reductions. In fact, as observed by Xinyi Yuan, \eqref{eq:algOI'} is simply equivalent to Conjecture \eqref{eq:OI} because multiplication by a purely imaginary element in $E^-\backslash0$ interchanges the situation. In Proposition \ref{p:gptoalg}, we show that the group version \eqref{eq:gpOI} follows from the Lie algebra version \eqref{eq:OI} for any $F$ (of any characteristic); the same argument shows that \eqref{eq:gpOI'} follows from \eqref{eq:algOI'}. Therefore, the orbital integral identity \eqref{eq:OI} for Lie algebras implies all the other identities, for any $F$. Moreover, the vanishing result in Conjecture \ref{cj:OI}(3) and \ref{cj:OI'}(3) follows from a cancellation argument (see Lemma \ref{l:cancel}).

To prove \eqref{eq:OI} in the case $\char(F)>n$, we follow the strategy of the proof of the Langlands-Shelstad Fundamental Lemma in the Lie algebra and function field case, which is recently finished by Ng\^o Bao Ch\^au (\cite{NFL}), building on the work of many mathematicians over the past thirty years. The geometry involved in the Langlands-Shelstad Fundamental Lemma consists of a local part---the affine Springer fibers (cf. \cite{GKM}) and a global part---the Hitchin fibration (cf. Ng\^o \cite{NgoHit}, \cite{NFL}). Roughly speaking, the motives of the affine Springer fibers, after taking Frobenius traces, give the orbital integrals; the motives of the Hitchin fibers can be written as a product of the motives of affine Springer fibers. The advantage of passing from local to global is that we can control the ``bad'' (non-computable) orbital integrals by ``nice'' (computable) orbital integrals, using global topological machinery such as perverse sheaves.

In the following two subsections, we reformulate Conjecture \ref{cj:OI}(1) using local and global moduli spaces, and indicate the main ideas of the proof.

\subsection{The local reformulation}
As the first step towards a local reformulation, we translate the problem of computing orbital integrals into that of counting lattices. In \cite{JR}, the authors introduced $(2n-1)$-invariants associated to an element $A\in\frgl_n(E)$ with respect to the conjugation action of $\GL_{n-1}(E)$. Here the embedding $\GL_{n-1}(E)\hookrightarrow\GL_n(E)$ is given by a splitting of the vector space $E^n=E^{n-1}\oplus E$. The splitting gives a distinguished vector $e_0$ (which spans the one dimension direct summand) and a distinguished covector $e_0^*$ (the projection to $Ee_0$). In this paper, we use a different (but equivalent) set of invariants $a=(a_1,\cdots,a_n)$ and $b=(b_0,\cdots,b_{n-1})$ for $A$: the $a_i$'s are the coefficients of the characteristic polynomial of $A$ and $b_i=e_0^*A^ie_0$ (so that $b_0=1$). We say that $A\in\frs_n(F)$ and $A'\in\fru_n(F)$ {\em match each other} if they have the same collection of invariants viewed as elements in $\frgl_n(E)$.

We fix a collection of invariants $(a,b)$ which is integral and strongly regular semisimple (see Definition \ref{def:rs} and Lemma \ref{l:eqrs}). Then we can associate a finite flat $\calO_F$-algebra $R_a$ (see \S \ref{ss:inv}). The invariants $b$ gives an $R_a$-linear embedding $\gamma_{a,b}:R_a\hookrightarrow R_a^\vee=\Hom_{\calO_F}(R_a,\calO_F)$. We can rewrite the LHS of (\ref{eq:OI}) as
\begin{equation*}
\OI_{A}^{\GL_{n-1},\eta}(\one_{\frs_n(\calO_F)})=\pm\sum_{i}(-1)^i\#\sMloc_{i,a,b}
\end{equation*}
where each $\sMloc_{i,a,b}$ is the set of $R_a$-lattices $\Lambda$ such that $R_a\subset\Lambda\subset R_a^\vee$ and $\leng_{\calO_F}(R_a^\vee:\Lambda)=i$ (see Notation \ref{n:ring}).

The $E$-vector space $R_a(E)=R_a\otimes_{\calO_F}E$ carries a natural Hermitian form given also by $b$. Similarly, we can rewrite the RHS of (\ref{eq:OI}) as
\begin{equation*}
\OI_{A'}^{\Ug_{n-1}}(\one_{\fru_n(\calO_F)})=\#\sNloc_{a,b}
\end{equation*}
where $\sNloc_{a,b}$ is the set of $R_a(\calO_E)$-lattices $\Lambda'$, self-dual under the Hermitian form, and $R_a(\calO_E)\subset\Lambda'\subset R_a^\vee(\calO_E)$.

When $F$ is a function field with residue field $k$, there are obvious moduli spaces of lattices $\Mloc_{i,a,b}$ and $\Nloc_{a,b}$ defined over $k$ such that $\sMloc_{i,a,b}$ and $\sNloc_{a,b}$ are the set of $k$-points of $\Mloc_{i,a,b}$ and $\Nloc_{a,b}$. By Lefschetz trace formula for schemes over $k$, the Conjecture \ref{cj:OI} is a consequence of the following theorem, which is the main local result of the paper

\begin{mthloc}
Suppose $\char(F)=\char(k)>\max\{n,2\}$ and $\eta_{E/F}(\Delta_{a,b})=1$. Then there is an isomorphism of graded $\Frob_k$-modules:
\begin{equation*}
\bigoplus_{i=0}^{\val_F(\Delta_{a,b})}H^*(\geom{\Mloc_{i,a,b}}{k},\Ql(\eta_{k'/k})^{\otimes i})\cong H^*(\geom{\Nloc_{a,b}}{k},\Ql)
\end{equation*}
\end{mthloc}

For more details on the notation, see \S \ref{ss:locmatch}. This theorem is deduced from the global Main Theorem, which we shall give an overview in the next subsection.

We point out that there are several easy cases of the Conjecture \ref{cj:OI} which are verified in \S \ref{ss:FL} without any restriction on $F$.

\subsection{The global approach}
The global geometry related to the Jacquet-Rallis Fundamental Lemma is the modified versions of Hitchin moduli spaces for the groups $\GL_n$ and $\Ug_n$. The geometry of Hitchin fibrations is studied in great detail by Ng\^o \cite{NFL} and Laumon-Ng\^o \cite{LN}, the latter treats the unitary group case and is especially important for the purpose of this paper.

We fix a smooth projective and geometrically connected curve $X$ over $k$; to study the unitary group, we also fix an \'etale double cover $X'\to X$. We fix two effect divisors $D$ and $D_0$ on $X$ of large enough degrees.

We introduce the stack $\calM$, classifying quadruples $(\calE,\phi,\lambda,\mu)$ where $\calE$ is a vector bundle of rank $n$; $\phi$ is a Higgs field on $\calE$; $\lambda:\calO_X(-D_0)\to\calE$  is the global counterpart of the distinguished vector $e_0$ and $\mu:\calE\to\calO_X(D_0)$ the global counterpart of the distinguished covector $e_0^*$. The stack $\calM$ is the disjoint union of $\calM_i$ ($i\in\ZZ$), according to the degree of $\calE$.

We also introduce the stack $\calN$, classifying quadruples $(\calE',h,\phi',\mu')$ where $\calE'$ is a vector bundle of rank $n$ on $X'$; $h$ is a Hermitian structure on $\calE'$; $\phi'$ is a skew-Hermitian Higgs field on $\calE'$ and $\mu':\calE\to\calO_{X'}(D_0)$ is the distinguished covector (the distinguished vector is determined by $\mu'$ using the Hermitian structure).

Both $\calM_i$ and $\calN$ fiber over the ``Hitchin base'' $\calA\times\calB$, classifying global invariants $(a,b)$. In \S \ref{s:mod}, we prove the following geometric properties of these moduli spaces
\begin{itemize}
\item (Proposition \ref{p:smGL} and \ref{p:smU}) Over the locus $\Ah\times\Bh$, $\Mh_i$ (for ``non-extremal'' $i$) and $\Nh$ are smooth schemes over $k$ and the ``Hitchin maps'' $\fh_i:\Mh_i\to\Ah\times\Bh$ and $\gh:\Nh\to\Ah\times\Bh$ are proper;
\item (Proposition \ref{p:small}) Fix a Serre invariant $\delta\geq1$ (see \S\ref{ss:small}), for $\deg(D)$ and $\deg(D_0)$ large enough, the restrictions of $\fh_i$ and $\gh$ to $\Adel\times\Bh$ are {\em small} (cf. Notation \ref{n:small}); 
\item (Proposition \ref{p:pdGL} and \ref{p:pdU}) The fibers $\calM_{i,a,b}$ and $\calN_{a,b}$ can be written as products of local moduli spaces $\calM^x_{i_x,a_x,b_x}$ and $\calN^x_{a_x,b_x}$ defined in a similar way as $\Mloc_{i,a,b}$ and $\Nloc_{a,b}$.
\end{itemize}

The global part of the Main Theorem is
\begin{mthglob}
Fix $\delta\geq1$,$\deg(D)\geq c_\delta$ and $n(\deg(D_0)-g+1)\geq\delta+g_Y$. Then there is a natural isomorphism in $D^b_c(\Adel\times\Bh)$:
\begin{equation}\label{eq:gl}
\bigoplus_{i=-d}^d\fh_{i,*}L_{d-i}|_{\Adel\times\Bh}\cong\gh_*\Ql|_{\Adel\times\Bh}.
\end{equation} 
\end{mthglob}
Here $d=n(n-1)\deg(D)/2+n\deg(D_0)$; $L_{d-i}$ is a local system of rank one and order two on $\Mh_{i}$ (see \S \ref{ss:lsonM}), which is a geometric analogue of the factor $\eta_{E/F}(\det(g))$ appearing in the orbital integral $\OI_{A}^{\GL_{n-1},\eta}(\one_{\frs_n(\calO_F)})$.

The smallness of $\fh_i$ and $\gh$ over $\Adel\times\Bh$ and the smoothness of $\Mh_i$ and $\Nh$ are the essential geometric properties that enable one to prove the above theorem. In fact, these two properties imply that both sides are middle extensions of some local system on some dense open subset of $\Adel\times\Bh$, and to verify (\ref{eq:gl}), we only need to restrict to a nice open dense subset where both sides are explicitly computable.

However, there is one technical difficulty. The schemes $\Mh_i$ are visibly smooth only for ``non-extremal'' $i$'s (see Proposition \ref{p:smGL} for the range of $i$ where $\Mh_i$ is proved to be smooth), therefore the above argument only works for these $i$'s. To get around this difficulty, we enlarge $D_0$ and compare the moduli spaces defined by two different $D_0$'s. Indices $i$ which are extremal for the original $D_0$ become non-extremal for the new $D_0$. Details are given in \S \ref{ss:match}.

Finally, we use the global part of the Main Theorem to prove the local part. We identify $F$ with the local field associated to a $k$-point $x_0$ on the curve $X$. For local invariants $(a^0,b^0)$, we may choose global invariants $(a,b)$ such that the global moduli spaces $\calM_{i,a,b}$ and $\calN_{a,b}$, when expressed as product of local moduli spaces, are very simple away from $x_0$. Taking the Frobenius traces of the two sides of (\ref{eq:gl}) and using the product formulae, we get a formula of the following form
\begin{equation*}
\Tr(\Frob_k,M_{x_0})\cdot\prod_{x\neq x_0}\Tr(\Frob_k,M_{x})=\Tr(\Frob_k,N_{x_0})\cdot\prod_{x\neq x_0}\Tr(\Frob_k,N_x)
\end{equation*}
where $M_x$ and $N_x$ are the cohomology groups of the relevant local moduli spaces. It is easy to show $\Tr(\Frob_k,M_x)=\Tr(\Frob_k,N_x)$ for all $x\neq x_0$, but to conclude that $\Tr(\Frob_k,M_{x_0})=\Tr(\Frob_k,N_{x_0})$, which is what we need, it remains to make sure that the terms $\Tr(\Frob_k,N_x)$ are nonzero for all $x\neq x_0$. This is another technical difficulty of the paper, and is responsible for the length of \S \ref{ss:pre}.

\subsection{Plan of the paper}

In \S \ref{s:loc}, we first fix notations and introduce the invariants $(a,b)$. We then reformulate the problem into counting of lattices in \S \ref{ss:OIGL} and \S \ref{ss:OIU}. We verify a few simple cases of Conjecture \ref{cj:OI} in \S \ref{ss:FL}, including Conjecture \ref{cj:OI}(3). We deduce the group version Conjecture \ref{cj:OI}(2) from the Lie algebra version Conjecture \ref{cj:OI}(1) in \S \ref{ss:gptoalg}. Except for \S \ref{ss:locmatch}, we work with no restrictions on $F$.

In \S \ref{s:mod}, we introduce the global moduli spaces $\calM_i$ and $\calN$ and study their geometric properties.

In \S \ref{s:match}, we formulate and prove the global part of the Main Theorem. To this end, we need to study perverse sheaves on the symmetric powers of curves, especially the ``binomial expansion'' formula (Lemma \ref{l:bin}), which demystifies the decomposition (\ref{eq:gl}).

In \S \ref{s:pf}, we deduce the local part of the Main Theorem from the global part.

\subsection{Notations}

\subsubsection{}\label{n:ring} Let $\calO$ be a commutative ring. For a scheme $X$ over $\calO$ or an $\calO$-module $M$, and an $\calO$-algebra $R$, we let $X(R)$ be the $R$-points of $X$ and $M(R):=M\otimes_{\calO}R$.

For a DVR $\calO$ and two full rank $\calO$-lattice $\Lambda_1,\Lambda_2$ in some $\Frac(\calO)$-vector space $V$, we define the relative length $\leng_{\calO}(\Lambda_1:\Lambda_2)$ to be
\begin{equation*}
\leng_{\calO}(\Lambda_1:\Lambda_2):=\leng_{\calO}(\Lambda_1/\Lambda_1\cap\Lambda_2)-\leng_{\calO}(\Lambda_2/\Lambda_1\cap\Lambda_2)
\end{equation*}
where $\leng_{\calO}(-)$ denotes the usual length of a torsion $\calO$-module.

\subsubsection{}\label{n:sh} Coherent sheaves are denoted by the calligraphic letters $\calE,\calF,\calL,\cdots$; constructible $\Ql$-complexes are denoted by the capital letters $L,K,\cdots$.

\subsubsection{}\label{n:semilocal} From \S 3, we will work with a fixed smooth base curve $X$ over a field $k$. For a morphism $p:Y\to X$ and a closed point $x\in X$, we denote by $\calO_{Y,x}$ the completed semilocal ring of $\calO_Y$ along $p^{-1}(x)$.

If $Y$ is a Gorenstein curve, let $\omega_{Y/X}$ be the relative dualizing sheaf
\begin{equation*}
\omega_{Y/X}=\omega_{Y/k}\otimes p^*\omega^{-1}_{X/k}.
\end{equation*}
For a coherent sheaf $\calF$ on $Y$, let $\calF^\vee=\underline{\Hom}_{Y}(\calF,\omega_{Y/X})$ be the (underived) Grothendieck-Serre dual. When we work over an extra parameter scheme $S$ so that $p:Y\to X\times S$, then $^\vee$ means $\underline{\Hom}_{Y}(-,\omega_{Y/X\times S})$.

\subsubsection{}\label{n:loc} For an \'etale double cover (a finite \'etale map of degree 2) $\pi:X'\to X$ of a scheme $X$, we decompose the sheaf $\pi_*\Ql$ into $\pm1$-eigenspaces under the natural action of $\ZZ/2\cong\Aut(X'/X)$:
\begin{equation*}
\pi_*\Ql=\Ql\oplus L.
\end{equation*}
The the rank one local system $L$ satisfies $L^{\otimes2}\cong\Ql$. We call $L$ the local system {\em associated to} the \'etale double cover $\pi$.
 
\subsubsection{}\label{n:mid} We use the terminology ``middle extension'' in a non-strict way. If $K$ is a $\Ql$-complex on a scheme $X$, we say $K$ is a {\em middle extension on $X$} if for some (and hence any) smooth open dense subset $j:U\hookrightarrow X$ over which $K$ is a local system placed at degree $n$, we have
\begin{equation*}
K\cong j_{!*}(j^*K[n+\dim X])[-n-\dim X].
\end{equation*}

\subsubsection{}\label{n:small} Recall from \cite[\S 6.2]{GM} that a proper surjective morphism $f:Y\to X$ between {\em irreducible} schemes over an algebraically closed field $\Omega$ is called {\em small} if for any $r\geq1$, we have
\begin{equation}\label{eq:cosm}
\codim_{X}\{x\in X|\dim f^{-1}(x)\geq r\}\geq 2r+1.
\end{equation}
We will use this terminology in a loose way: we will not require $Y$ to be irreducible (but we do require all other conditions). This is just for notational convenience. The main property that we will use about small morphisms is:

{\em If $Y/\Omega$ is smooth and equidimensional, then $f_*\Ql$ is a middle extension on $X$.} 

\subsubsection{}\label{n:Fr} For a finite field $k$, we denote by $\Frob_k$ the {\em geometric} Frobenius element in $\Gal(\overline{k}/k)$.

\subsection*{Acknowledgment} The author would like to thank Xinyi Yuan and Wei Zhang for suggesting the problem and for helpful discussions. He also thanks Joseph Bernstein, Julia Gordon, Yifeng Liu and Shou-Wu Zhang for useful suggestions. Different parts of this work were conceived during the workshop on Trace Formula held in Columbia University in Dec. 2008, and the AIM workshop on Relative Trace formula and Periods of Automorphic Forms in Aug. 2009. The author would like to thank the organizers of these workshops.

\section{Local formulation}\label{s:loc}

\subsection{The setting}\label{ss:setting}

Let $F$ be a non-Archimedean local field with valuation ring $\calO_F$, uniformizing parameter $\varpi$ and residue field $k=\FF_q$ such that $q$ is odd. Let $E$ be either the unramified quadratic extension of $F$ (in which case we call $E/F$ is nonsplit) or $E=F\times F$ (in which case we call $E/F$ is split). Let $\calO_E$ be the valuation ring of $E$ and $k'$ be the residue algebra of $k$. Let $\sigma$ be the generator of $\Gal(E/F)$. Let $\eta_{E/F}:F^\times/\Nm E^\times\to\{\pm1\}$ be the quadratic character associated to the extension $E/F$: this is trivial if and only if $E/F$ is split. We decompose $E$ and $\calO_E$ according into eigenspaces of $\sigma$:
\begin{equation*}
E=F\oplus E^-;\hspace{1cm} \calO_E=\calO_F\oplus\calO_E^-
\end{equation*}
where $\sigma$ acts on $E^-$ and $\calO_E^-$ by $-1$.

We fix a free $\calO_F$-module $W$ of rank $(n-1)$. Let $V=W\oplus \calO_F\cdot e_0$ be a free $\calO_F$-module of rank $n$ with a distinguished element $e_0$ and let $e_0^*:V\to \calO_F$ be the projection along $W$ such that $e_0^*(e_0)=1$. Let $W^\vee=\Hom_{\calO_F}(W,\calO_F)$ and $V^\vee=\Hom_{\calO_F}(V,\calO_F)$.

Let $\GL_{n-1}=\GL(W)$ and $\GL_{n}=\GL(V)$ be the general linear groups over $\calO_F$. We have the natural embedding $\GL_{n-1}\hookrightarrow\GL_{n}$ as block-diagonal matrices:
\begin{equation*}
A\mapsto\left(\begin{array}{lll} A & \\ & 1\end{array}\right)
\end{equation*}

Let $\frgl_n$ be the Lie algebra (over $\calO_F$) of $\GL_n$ consisting of $\calO_F$-linear operators on $V$. Let 
\begin{equation*}
\frs_n(\calO_F):=\{A\in\frgl_n(\calO_E)|A+\sigma(A)=0\}.
\end{equation*}
Then $\frs_n(F)$ is be the subset of $\frgl_n(E)$ consisting of matrices with entries in $E^-$. The group $\GL_n$ acts on $\frs_n$ by conjugation.

Let $(\cdot,\cdot)$ be a Hermitian form on $W$ with trivial discriminant and extend this to a Hermitian form on $V$ by requiring that $(W,e_0)=0$ and $(e_0,e_0)=1$. These Hermitian forms define unitary groups $\Ug_{n-1}=\Ug(W,(\cdot,\cdot))$ and $\Ug_{n}=\Ug(V,(\cdot,\cdot))$ over $\calO_F$. We also have the natural embedding $\Ug_{n-1}\hookrightarrow\Ug_{n}$ as block-diagonal matrices:
\begin{equation*}
A\mapsto\left(\begin{array}{lll} A & \\ & 1\end{array}\right)
\end{equation*}

Let $\fru_{n}$ be the Lie algebra (over $\calO_F$) of $\Ug_{n}$, i.e.,
\begin{equation*} 
\fru_n(\calO_F)=\{A\in\frgl_n(\calO_E)|A+A^{\#}=0\}
\end{equation*}
where $A^{\#}$ is the adjoint of $A$ under the Hermitian form $(\cdot,\cdot)$. Then $U_n$ acts on $\fru_n$ by conjugation.

\subsection{The invariants}\label{ss:inv}
For any $A\in\frgl_n(E)$, and $i=1,\cdots,n$, let
\begin{equation*}
a_i(A):=\Tr(\bigwedge^iA)
\end{equation*}
be the coefficients of the characteristic polynomial of $A$, i.e.,
\begin{equation*}
\det(t\id_V-A)=t^n+\sum_{i=1}^n(-1)^ia_i(A)t^{n-i}.
\end{equation*}
For $i=0,\cdots,n-1$, let
\begin{equation*}
b_i(A)=e_0^*(A^ie_0).
\end{equation*}
so that $b_0=1$. The $2n$-tuple $(a(A),b(A))$ of elements in $E$ are called the {\em invariants} of $A\in\frgl_n(E)$. It is easy to check that these are invariants of $\frgl_n(E)$ under the conjugation action by $\GL_{n-1}(E)$.

\begin{defn}\label{def:rs}
An element $A\in\frgl_n(E)$ is said to be {\em strongly regular semisimple with respect to the $\GL_{n-1}(E)$-action}, if
\begin{enumerate}
\item\label{it:rss} $A$ is regular semisimple as an element of $\frgl_n(E)$;
\item\label{it:cyc} the vectors $\{e_0,Ae_0,\cdots,A^{n-1}e_0\}$ form an $E$-basis of $V(E)$;
\item\label{it:dualcyc} the vectors $\{e_0^*,e_0A,\cdots,e_0^*A^{n-1}\}$ form an $E$-basis of $V^\vee(E)$.
\end{enumerate}
\end{defn}

\begin{defn}\label{def:v}
For $A\in\frgl_n(E)$, define $v(A)\in\ZZ$ to be the $F$-valuation of the $n$-by-$n$ matrix formed by the row vectors $\{e_0^*,e_0^*A,\cdots,e_0^*A^{n-1}\}$ under an $\calO_F$-basis of $V^\vee$.
\end{defn}

For a collection of invariants $(a,b)\in E^{2n}$ (we allow general $b_0\in E$, not just 1), we introduce a finite $E$-algebra
\begin{equation*}
R_{a}(E):=E[t]/(t^n-a_1t^{n-1}+\cdots+(-1)^na_n)
\end{equation*}
Let $R_a^\vee(E):=\Hom_E(R_{a}(E),E)$ be its linear dual, which is naturally an $R_{a}(E)$-module. The data of $b$ gives the following element $b'\in R_a^\vee(E)$:
\begin{eqnarray}\label{eq:bprime}
b':R_a(E)&\to&E\\
\notag
t^i&\mapsto&b_i,\hspace{1cm}i=0,1,\cdots,n-1.
\end{eqnarray}
which induces an $R_{a}(E)$-linear homomorphism
\begin{equation*}
\gamma'_{a,b}:R_{a}(E)\to R_{a}^\vee(E).
\end{equation*}
In other words, $\gamma'_{a,b}$ is given by the pairing:
\begin{eqnarray}\label{eq:pairing}
R_{a}(E)\otimes_{E}R_{a}(E)&\to&E\\
\notag
(x,y)\mapsto b'(xy).
\end{eqnarray}

\begin{defn}\label{def:Delta}
The $\Delta$-invariant $\Delta_{a,b}$ of the collection $(a,b)\in E^{2n}$ is the determinant of the map $\gamma'_{a,b}$ under the $E$-basis $\{1,t,\cdots,t^{n-1}\}$ of $R_{a}(E)$ and the corresponding dual basis of $R_{a}^\vee(E)$.
\end{defn}

It is easy to see that for $A\in\frgl_n(E)$ with invariants $(a,b)$, $\Delta_{a,b}$ is the determinant of the matrix $(e_0^*A^{i+j}e_0)_{0\leq i,j\leq n}$.

\begin{lemma}\label{l:eqrs}
Let $A\in\frgl_n(E)$ with invariants $(a,b)\in E^{2n}$. Then $A$ is strongly regular semisimple if and only if
\begin{enumerate}
\item $R_{a}(E)$ is an \'etale algebra over $E$;
\item $\Delta_{a,b}\neq0$; i.e., $\gamma'_{a,b}:R_{a}(E)\to R_{a}^\vee(E)$ is an isomorphism.
\end{enumerate}
\end{lemma}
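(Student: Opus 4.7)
The plan is to reduce the equivalence to the single matrix identity
\[
\Delta_{a,b} = \det M_1 \cdot \det M_2,
\]
where $M_1, M_2$ are the matrices appearing in conditions (ii) and (iii) of Definition \ref{def:rs}. Once this factorization is in hand, the lemma is immediate.

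First, I would unpack the definition of $\Delta_{a,b}$ in terms of $A$. Under the basis $\{1,t,\dots,t^{n-1}\}$ of $R_a(E)$ and its dual basis of $R_a^\vee(E)$, the $(i,j)$-entry of the matrix of $\gamma'_{a,b}$ is by \eqref{eq:pairing} equal to $b'(t^{i+j})$. For $i+j \le n-1$ this is $b_{i+j} = e_0^* A^{i+j} e_0$ by definition. For $i+j \ge n$, the relation $t^n = a_1 t^{n-1} - \dots + (-1)^{n+1} a_n$ holding in $R_a(E)$ and Cayley--Hamilton applied to $A$ give the same linear recursion for $b'(t^k)$ and for $e_0^* A^k e_0$, with matching initial values for $k = 0,\dots,n-1$. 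Hence $b'(t^k) = e_0^* A^k e_0$ for every $k \ge 0$, and
\[
\Delta_{a,b} = \det\bigl(e_0^* A^{i+j} e_0\bigr)_{0 \le i,j \le n-1}.
\]

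Next I would factor this Hankel determinant. Fix any $E$-basis of $V(E)$ with the corresponding dual basis of $V^\vee(E)$. Let $M_1$ be the $n\times n$ matrix whose $j$-th column lists the coordinates of $A^j e_0$, and $M_2$ the matrix whose $i$-th row lists the coordinates of $e_0^* A^i$. A direct computation gives $(M_2 M_1)_{ij} = (e_0^* A^i)(A^j e_0) = e_0^* A^{i+j} e_0$, so $\Delta_{a,b} = \det M_1 \cdot \det M_2$. Condition (ii) of Definition \ref{def:rs} is exactly $\det M_1 \neq 0$, and condition (iii) is exactly $\det M_2 \neq 0$; therefore $\Delta_{a,b} \neq 0$ if and only if both (ii) and (iii) hold.

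It then remains to match condition (i) with condition (1) of the lemma. Regular semisimplicity of $A$ over $E$ is equivalent to its characteristic polynomial $f(t) = t^n - a_1 t^{n-1} + \dots + (-1)^n a_n$ having distinct roots in an algebraic closure of $E$, i.e.\ being separable, i.e.\ $R_a(E) = E[t]/(f(t))$ being étale over $E$. Combining with the previous step gives strongly regular semisimple $\iff$ (i)+(ii)+(iii) $\iff$ (1)+(2). There is no real obstacle: the only point requiring care is the Cayley--Hamilton bookkeeping needed to identify the matrix of $\gamma'_{a,b}$ with the Hankel matrix $(e_0^* A^{i+j} e_0)$, after which the argument is pure linear algebra.
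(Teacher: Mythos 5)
Your proof is correct and follows essentially the same route as the paper's: the paper likewise observes (just before the lemma) that $\Delta_{a,b} = \det(e_0^*A^{i+j}e_0)$, factors this Hankel matrix as the product of the two moment matrices, and matches condition~(1) with regular semisimplicity. You have merely spelled out the Cayley--Hamilton bookkeeping that the paper leaves implicit.
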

\begin{proof}
Condition (1) is equivalent to that $A$ is regular semisimple as an element of $\frgl_n(E)$. Condition (2) is equivalent to that the matrix $(e_0^*A^{i+j}e_0)_{0\leq i,j\leq n}$ is nondegenerate. Since this matrix is the product of the two matrices $(e_0^*,e_0^*A,\cdots,e_0^*A^{n-1})$ and $(e_0,Ae_0,\cdots,A^{n-1}e_0)$, the nondegeneracy of $(e_0^*A^{i+j}e_0)_{0\leq i,j\leq n}$ is equivalent to the nondegeneracy of $(e_0^*,e_0^*A,\cdots,e_0^*A^{n-1})$ and $(e_0,Ae_0,\cdots,A^{n-1}e_0)$, which are the last two conditions of Definition \ref{def:rs}.
\end{proof}

\begin{remark}
From the above Lemma, we see that the strong regular semisimplicity of $A\in\frgl_n(E)$ is in fact a property of its invariants $(a,b)$. Therefore, we call a collection of invariants $(a,b)$ {\em strongly regular semisimple}, if it satisfies the conditions in Lemma \ref{l:eqrs}.
\end{remark}

If the invariants $(a,b)$ are elements in $\calO_E$, we get a canonical $\calO_E$-form $R_{a}(\calO_E)$ of $R_a(E)$ by setting 
\begin{equation}\label{eq:RaE}
R_{a}(\calO_E):=\calO_E[t]/(t^n-a_1t^{n-1}+\cdots+(-1)^na_n)
\end{equation}
Let $R_{a}^\vee(\calO_E)=\Hom_{\calO_E}(R_{a}(\calO_E),\calO_E)$ be its dual. Then $\gamma'_{a,b}$ restricts to an $R_{a}(\calO_E)$-linear map
\begin{equation*}
\gamma'_{a,b}:R_{a}(\calO_E)\to R_{a}^\vee(\calO_E).
\end{equation*}

For $A\in\frs_n(F)$ or $\fru_n(F)$ with invariants $(a,b)$, it is obvious that $a_i,b_i\in E^{\sigma(-1)^i}$. Suppose further more that $a_i,b_i\in\calO_E^{\sigma=(-1)^i}$. We extend the involution $\sigma$ on $\calO_E$ to an involution $\sigma_R$ on $R_{a}(\calO_E)$ by requiring $\sigma_R(t)=-t$. The involution $\sigma_R$ defines an $\calO_F$-form of $R_{a}(\calO_E)$:
\begin{equation*}
R_{a}:=R_{a}(\calO_E)^{\sigma_R}
\end{equation*}
Let $R_{a}^\vee:=\Hom_{\calO_F}(R_{a},\calO_F)$. The map $\gamma'_{a,b}$ restricts to an $R_{a}$-linear homomorphism
\begin{equation}\label{eq:gamma}
\gamma_{a,b}:R_{a}\to R_{a}^\vee.
\end{equation}
and $\gamma_{a,b}'=\gamma_{a,b}\otimes_{\calO_F}\calO_E$. Since $R_a$ is an $\calO_F$-form of $R_a(\calO_E)$ defined before, there is no confusion in using the notations $R_a(\calO_E)$ or $R_a(E)$ (cf. Notation \ref{n:ring}).

\subsection{Orbital integrals for $\frs_n(F)$}\label{ss:OIGL}
Let $A\in\frs_n(F)$ be strongly regular semisimple with invariants $(a,b)$. Let
\begin{equation*}
\OI_{A}^{\GL_{n-1},\eta}(\one_{\frs_n(\calO_F)}):=\int_{\GL_{n-1}(F)}\one_{\frs_n(\calO_F)}(g^{-1}Ag)\eta_{E/F}(\det(g))dg
\end{equation*}
where $dg$ is the Haar measure on $\frs_n(F)$ such that $\vol(\frs_n(\calO_F),dg)=1$ and $\one_{\frs_n(\calO_F)}$ is the characteristic function of $\frs_n(\calO_F)\subset\frs_n(F)$.

\begin{remark}
It is easy to see if the orbital integral $\OI_{A}^{\GL_{n-1},\eta}(\one_{\frs_n(\calO_F)})\neq0$, then $a_i,b_i\in\calO_E^{\sigma=(-1)^i}$.
\end{remark}

Now we suppose $(a,b)$ is strongly regular semisimple and $a_i,b_i\in\calO_E^{\sigma=(-1)^i}$. We view $R_{a}$ as a sublattice of $R_{a}^\vee$ via the map $\gamma_{a,b}:R_{a}\hookrightarrow R_{a}^\vee$. Then
\begin{equation*}
\leng_{\calO_F}(R_a^\vee:R_a)=\val_F(\Delta_{a,b}).
\end{equation*}

For each integer $0\leq i\leq\val_F(\Delta_{a,b})$, let
\begin{equation*}
\sMloc_{i,a,b}:=\{R_a\textup{-lattices }\Lambda|R_a\subset\Lambda\subset R_a^\vee\textup{ and }\leng_{\calO_F}(R_a^\vee:\Lambda)=i\}
\end{equation*}

\begin{prop}\label{p:GLloc} Let $A\in\frs_n(F)$ be strongly regular semisimple with invariants $a_i,b_i\in\calO_E^{\sigma=(-1)^i}$, then
\begin{equation*}
\OI^{\GL_{n-1},\eta}_A(\one_{\frs_n(\calO_F)})=\eta_{E/F}(\varpi)^{v(A)}\sum_{i=0}^{\val_F(\Delta_{a,b})}\eta_{E/F}(\varpi)^i\#\sMloc_{i,a,b}.
\end{equation*}
\end{prop}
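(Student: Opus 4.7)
The plan is to translate the orbital integral into a weighted count of $R_a$-lattices sitting between $R_a$ and $R_a^\vee$, via the canonical $R_a$-equivariant isomorphism arising from strong regular semisimplicity. Since $E/F$ is unramified or split, $\eta_{E/F}$ is trivial on $\calO_F^\times$, so $\eta_{E/F}(\det g)=\eta_{E/F}(\varpi)^{\val_F(\det g)}$ factors through the cosets $[g]\in\GL_{n-1}(F)/\GL_{n-1}(\calO_F)$. With Haar measure appropriately normalized, the orbital integral becomes a finite sum over those $[g]$ with $g^{-1}Ag\in\frs_n(\calO_F)=\frgl_n(\calO_E)\cap\frs_n(F)$. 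This integrality condition is equivalent to $A$-stability of the $\calO_E$-lattice $L:=gV(\calO_E)\subset V(E)$; and since $g\in\GL_{n-1}(F)$ commutes with $\sigma$ and fixes $e_0$, the admissible cosets are in bijection with $\calO_E$-lattices $L$ in $V(E)$ that are $A$-stable, $\sigma$-stable, contain $e_0$, and satisfy $e_0^*(L)=\calO_E$.

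By Lemma \ref{l:eqrs}, there is a canonical $\sigma$-equivariant, $R_a(E)$-linear isomorphism $\iota:V(E)\xrightarrow{\sim} R_a(E)$ determined by $\iota(e_0)=1$. Under $\iota$ the covector $e_0^*$ corresponds to the functional $b'$ from \eqref{eq:bprime}, and the conditions on $L$ translate to: $\tilde L_E:=\iota(L)$ is a $\sigma$-stable, $R_a(\calO_E)$-stable $\calO_E$-lattice containing $R_a(\calO_E)$. Together with $R_a(\calO_E)$-stability, the condition $b'(\tilde L_E)\subset\calO_E$ forces $\gamma'_{a,b}(\tilde L_E)\subset R_a^\vee(\calO_E)$, so $\tilde L_E$ lies between $R_a(\calO_E)$ and $R_a^\vee(\calO_E)$ under the identification by $\gamma_{a,b}$. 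Taking $\sigma$-invariants gives a bijection between admissible cosets and lattices $\tilde L:=\tilde L_E^\sigma\in\bigsqcup_i\sMloc_{i,a,b}$.

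The remaining task is to compute $\val_F(\det g)$ in terms of $i:=\leng_{\calO_F}(R_a^\vee:\tilde L)$. Consider the composite $M^*:=\gamma'_{a,b}\circ\iota:V(E)\to R_a^\vee(E)$. Using the $R_a$-equivariance $\iota(A^me_j)=t^m\iota(e_j)$, one finds that the matrix of $M^*$ with respect to the $\calO_E$-bases $\{e_j\}$ of $V(\calO_E)$ and $\{(t^i)^\vee\}$ of $R_a(\calO_E)^\vee$ has $(i,j)$-entry $b'(t^i\iota(e_j))=e_0^*(A^ie_j)$, which is precisely the matrix in Definition \ref{def:v}. Converting to $\calO_F$-bases of $R_a^\vee$ introduces a factor $\tau^{\lfloor n/2\rfloor}$ of $F$-valuation zero (where $\tau\in\calO_E^-$ is a unit), yielding $\val_F(\det (M^*)^\sigma)=v(A)$. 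Multiplicativity of determinants combined with $\val_F(\det\gamma_{a,b})=\val_F(\Delta_{a,b})$ then gives $\leng_{\calO_F}(R_a:V^\iota)=v(A)-\val_F(\Delta_{a,b})$, where $V^\iota:=\iota(V)$. Additivity of signed lengths along the chain $V^\iota, R_a, \tilde L, R_a^\vee$ produces
\[ \val_F(\det g)=\leng_{\calO_F}(gV:V)=\leng_{\calO_F}(\tilde L:V^\iota)=(\val_F(\Delta_{a,b})-i)+(v(A)-\val_F(\Delta_{a,b}))=v(A)-i. \]
Since $\eta_{E/F}(\varpi)^2=1$, this gives $\eta_{E/F}(\varpi)^{\val_F(\det g)}=\eta_{E/F}(\varpi)^{v(A)+i}$, and summing over admissible cosets yields the proposition.

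The main obstacle is the integral bookkeeping in the final step: because $\iota$ is canonical only as an $E$-linear map, $V^\iota$ is a twisted $\calO_F$-lattice that must be compared to $R_a$ through the $\sigma$-twists implicit in the $\calO_F$-structure of $R_a(\calO_E)$ (each basis element $t^i$ contributing a factor $\tau^{i\bmod 2}$). Tracking these twists carefully enough to obtain the clean identity $\val_F(\det g)=v(A)-i$ is the technical heart of the proof.
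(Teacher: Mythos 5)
Your proof is correct and follows essentially the same route as the paper: both translate the orbital integral into a lattice count via the $\sigma$-equivariant isomorphism $\iota$ and then track the index shift by $v(A)$; the paper packages this as a chain of explicit set bijections through auxiliary sets $X_{i,A}$, $X'_{i,A}$, $M^W_{i,a,b}$ (with the key saturation lemma for $\calO_F 1_R\subset\Lambda$), while you work with $\sigma$-stable $\calO_E$-lattices and compute $\val_F(\det g)$ directly by additivity of signed lengths. One small nit: with the paper's sign convention for $\leng_{\calO_F}(\Lambda_1:\Lambda_2)$, you actually have $\val_F(\det g)=-\leng_{\calO_F}(gV:V)=i-v(A)$ rather than $v(A)-i$, but since $\eta_{E/F}(\varpi)^2=1$ this does not change the conclusion.
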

\begin{proof}
By Definition \ref{def:rs}(\ref{it:cyc}), we have a $\sigma$-equivariant $E$-linear isomorphism:
\begin{eqnarray}\label{eq:iota}
\iota':R_{a}(E)&\isom& V(E)\\
t^i&\mapsto& A^ie_0.
\end{eqnarray}
Therefore $\iota'$ restrict to an $F$-linear isomorphism 
\begin{equation*}
\iota:R_a(F)\isom V(F).
\end{equation*}
Define $R_{a,W}:=R_a\cap\iota^{-1}(W(F))$ and $R_{a,W}^\vee:=R_a^\vee\cap\iota^{-1}(W(F))$. We define some auxiliary sets
\begin{eqnarray*}
X_{i,A}&:=&\{g\in\GL_{n-1}(F)/\GL_{n-1}(\calO_F)|g^{-1}Ag\in\frs_n(\calO_F),\val_F(\det(g))=i\};\\
X'_{i,A}&:=&\{\calO_F\textup{-lattices }L\subset W(F)|A(L)\subset\calO_E^-L,\leng_{\calO_F}(L:W)=i\};\\
M^W_{i,a,b}&:=&\{\calO_F\textup{-lattices }\Lambda_W\subset R_{a,W}(F)|\Lambda_W\oplus\calO_F1_R\subset R_a(F)\textup{ is stable under }R_a,\\
&&\hspace{1cm}\leng_{\calO_F}(R_{a,W}^\vee:\Lambda_W)=i\}.
\end{eqnarray*}
where $1_R\in R_a$ is the identity element.

Note that the group $\GL_{n-1}(F)$ acts transitively on the set of $\calO_F$-lattices in $W(F)$ by left translation, and the stabilizer of $W$ equal to $\GL_{n-1}(\calO_F)$. Therefore we get a bijection
\begin{eqnarray*}
X_{i,A}&\isom&X'_{i,A}\\
g&\mapsto& gW.
\end{eqnarray*}

We identify $R_a\stackrel{\gamma}{\hookrightarrow}R_a^\vee$ both as $\calO_F$-lattices in $V(E)$ via $\iota$. Observe that
\begin{equation*}
\leng_{\calO_F}(R_{a,W}^\vee:W)=\leng_{\calO_F}(R_a^\vee:V)=v(A),
\end{equation*}
therefore we have a bijection
\begin{eqnarray*}
M^W_{v(A)-i,a,b}&\isom&X'_{i,A}\\
\Lambda_W&\mapsto&\iota(\Lambda_W).
\end{eqnarray*}
Finally we have a bijection
\begin{eqnarray*}
M^W_{i,a,b}&\isom&\sMloc_{i,a,b}\\
\Lambda_W&\mapsto&\Lambda_W\oplus\calO_F1_R.
\end{eqnarray*}
We check this is a bijection. On one hand, for $\Lambda_W\in M^W_{i,a,b}$, we have
\begin{equation*}
R_a\subset R_a(\Lambda_W\oplus\calO_F1_R)\subset\Lambda_W\oplus\calO_F1_R.
\end{equation*}
We also have
\begin{equation*}
b'(R_a(\Lambda_W\oplus\calO_F1_R))\subset b'(\Lambda_W\oplus\calO_F1_R)\subset\calO_F,
\end{equation*}
therefore $\Lambda_W\oplus\calO_F1_R\subset R_a^\vee$. This verifies $\Lambda_W\oplus\calO_F1_R\in\sMloc_{i,a,b}$.

On the other hand, we have to make sure that every $\Lambda\in\sMloc_{i,a,b}$ has the form $\Lambda=\Lambda_W\oplus\calO_F1_R$ for some lattice $\Lambda_W\subset R_{a,W}(F)$, i.e., $\calO_F1_R\subset\Lambda$ is saturated. But we can factorize the identity map on $\calO_F$ as
\begin{equation*}
\calO_F\xrightarrow{1\mapsto1_R}R_a\xrightarrow{\gamma}R_a^\vee\xrightarrow{\ev(1_R)}\calO_F.
\end{equation*}
Therefore for $R_a\subset\Lambda\subset R_a^\vee$, $\calO_F1_R\subset\Lambda$ is always saturated.

Now that we have set up a bijection between $X_{i,A}$ and $\sMloc_{v(A)-i,a,b}$, we have
\begin{eqnarray*}
\OI^{\GL_{n-1},\eta}_A(\one_{\frs_n(\calO_F)})&=&\sum_{i}\eta_{E/F}(\varpi)^i\#X_{i,A}\\
&=&\eta_{E/F}(\varpi)^{v(A)}\sum_{i}\eta_{E/F}(\varpi)^i\#\sMloc_{i,a,b}.
\end{eqnarray*}
\end{proof}

\subsection{Orbital integrals for $\fru_n(F)$}\label{ss:OIU}

\begin{remark}
It is easy to see if the orbital integral $\OI_{A}^{\Ug_{n-1}}(\one_{\frs_n}(\calO_F))\neq0$, then $a_i,b_i\in\calO_E^{\sigma=(-1)^i}$.
\end{remark}

Now we suppose $(a,b)$ is strongly regular semisimple and $a_i,b_i\in\calO_E^{\sigma=(-1)^i}$. We identify $R_{a}(\calO_E)$ as a sublattice of $R_{a}^\vee(\calO_E)$ via $\gamma'_{a,b}$. Recall from (\ref{eq:iota}) that we have an isomorphism $\iota':R_a(E)\isom V(E)$. The transport of the Hermitian form $(\cdot,\cdot)$ to $R_a(E)$ via $\iota'$ is given by:
\begin{equation}\label{eq:HermR}
(x,y)_R=b'(x\sigma_R(y)).
\end{equation}
where $b':R_a(E)\to E$ is defined in (\ref{eq:bprime}).

\begin{remark}\label{r:evenval}
Since the Hermitian form $(\cdot,\cdot)$ has trivial discriminant, so is $(\cdot,\cdot)_R$. Therefore if $A'\in\fru_n(F)$ has invariants $(a,b)$, then $\eta_{E/F}(\Delta_{a,b})=1$.
\end{remark}

Recall that for an $\calO_E$-lattice $\Lambda'\subset R_{a}(E)$, the {\em dual lattice under the Hermitian form $(\cdot,\cdot)_R$} is the $\calO_E$-lattice
\begin{equation*}
\Lambda'^{\bot}:=\{x\in R_{a}(E)|(x,\Lambda')_R\subset\calO_E\}.
\end{equation*}
Such a lattice is called {\em self-dual} (under the given Hermitian form) if $\Lambda'^{\bot}=\Lambda'$. Comparing with the pairing (\ref{eq:pairing}), it is easy to see that $R_{a}^\vee(\calO_E)$ is the dual of $R_{a}(\calO_E)$ under the Hermitian form $(\cdot,\cdot)_R$. We define
\begin{equation*}
\sNloc_{a,b}:=\{\textup{self-dual }R_{a}(\calO_E)\textup{-lattices }\Lambda'|R_{a}(\calO_E)\subset\Lambda'\subset R_a^\vee(\calO_E)\}.
\end{equation*}

\begin{prop}\label{p:Uloc}
Let $A'\in\fru_n(F)$ be strongly regular semisimple with invariants $a_i,b_i\in\calO_E^{\sigma=(-1)^i}$, then
\begin{equation*}
\OI_{A'}^{\Ug_{n-1}}(\one_{\fru_n(\calO_F)})=\#\sNloc_{a,b}.
\end{equation*}
\end{prop}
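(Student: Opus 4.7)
The plan is to mirror the proof of Proposition \ref{p:GLloc}, with two modifications: drop the sign character (since there is no $\eta_{E/F}$ to incorporate), and replace the count of arbitrary lattices between $R_a$ and $R_a^\vee$ by a count of self-dual lattices between $R_a(\calO_E)$ and $R_a^\vee(\calO_E)$. The workhorse is again the $\sigma$-equivariant isomorphism $\iota': R_a(E)\isom V(E)$ sending $t^i\mapsto A'^{i}e_0$, which by construction carries the Hermitian form $(\cdot,\cdot)$ on $V(E)$ to the form $(x,y)_R = b'(x\sigma_R(y))$ on $R_a(E)$ as in (\ref{eq:HermR}).

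First, I would introduce the auxiliary set
\begin{equation*}
X_{A'} := \{g\in \Ug_{n-1}(F)/\Ug_{n-1}(\calO_F) \mid g^{-1}A'g \in \fru_n(\calO_F)\},
\end{equation*}
whose cardinality equals $\OI_{A'}^{\Ug_{n-1}}(\one_{\fru_n(\calO_F)})$ since $\vol(\Ug_{n-1}(\calO_F))=1$. The group $\Ug_{n-1}(F)$ acts simply transitively on self-dual $\calO_E$-lattices in $W(E)$ with stabilizer $\Ug_{n-1}(\calO_F)$, which lets me rewrite $X_{A'}$ as the set of self-dual $\calO_E$-lattices $L\subset W(E)$ such that $A'(L\oplus\calO_E e_0)\subset L\oplus \calO_E e_0$. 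Transporting via $\iota'$, such an $L$ corresponds to an $R_a(\calO_E)$-stable $\calO_E$-lattice $\Lambda'\subset R_a(E)$ containing $1_R$.

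The final step is to identify this set of $\Lambda'$ with $\sNloc_{a,b}$. The relations $(e_0,W)=0$ and $(e_0,e_0)=1$ transport under $\iota'$ to $(1_R,\iota'^{-1}(W(E)))_R=0$ and $(1_R,1_R)_R=b_0=1$, so the decomposition $\Lambda' = \Lambda_W\oplus\calO_E\cdot 1_R$ is orthogonal and $L$ is self-dual iff $\Lambda'$ is. Given such a self-dual $\Lambda'$, the $R_a(\calO_E)$-stability together with $1_R\in\Lambda'$ forces $R_a(\calO_E)\subset\Lambda'$, and then self-duality forces $\Lambda'=\Lambda'^\bot\subset R_a(\calO_E)^\bot = R_a^\vee(\calO_E)$, so $\Lambda'\in\sNloc_{a,b}$. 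Conversely, for any $\Lambda'\in\sNloc_{a,b}$, the saturation argument from Proposition \ref{p:GLloc} applies verbatim over $\calO_E$: the identity on $\calO_E$ factors as $\calO_E\xrightarrow{1\mapsto 1_R}R_a(\calO_E)\xrightarrow{\gamma'_{a,b}}R_a^\vee(\calO_E)\xrightarrow{\ev(1_R)}\calO_E$ (using $b'(1_R)=b_0=1$), so $\calO_E\cdot 1_R$ is saturated in $\Lambda'$ and the orthogonal complement is a self-dual $\Lambda_W\subset W(E)$ of the required type. No single step is conceptually hard; the main point requiring care is the orthogonal splitting in the middle of the argument, since that is what converts self-duality of $L$ cleanly into self-duality of $\Lambda'$ rather than into some twisted version.
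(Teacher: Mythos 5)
Your proof is correct and follows essentially the same route as the paper's: both set up $X_{A'}=Y_{A'}$ and transport the problem through $\iota'$ to self-dual $R_a(\calO_E)$-lattices, with the paper simply deferring the verification details to "similar to Proposition \ref{p:GLloc}" while you spell out the orthogonal splitting and the saturation of $\calO_E\cdot 1_R$. The one slip is the phrase "acts simply transitively... with stabilizer $\Ug_{n-1}(\calO_F)$"; you mean transitively (simple transitivity would force a trivial stabilizer), but the intended meaning is clear and the argument is unaffected.
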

\begin{proof}
The argument is similar as the proof of Proposition \ref{p:GLloc}. Let
\begin{equation*}
Y_{A'}=\{g\in\Ug_{n-1}(F)/\Ug_{n-1}(\calO_F)|g^{-1}A'g\in\fru_n(\calO_F)\}.
\end{equation*}
Then $\Ug_{n-1}(F)$ acts transitively on the set of self-dual $\calO_E$-lattices in $W(E)$, such that the stabilizer of $W(\calO_E)$ is $\Ug_{n-1}(\calO_F)$. Therefore we get a bijection
\begin{eqnarray*}
Y_{A'}&\isom&\sNloc_{a,b}\\
g&\mapsto&\iota'^{-1}(gW(\calO_E))\oplus\calO_E1_R.
\end{eqnarray*}
Hence
\begin{equation*}
\OI_{A'}^{\Ug_{n-1}}(\one_{\fru_n(\calO_F)})=\#Y_{A'}=\#\sNloc_{a,b}.
\end{equation*}
\end{proof}

\subsection{The Fundamental Lemma and simple cases}\label{ss:FL}
\begin{defn}\label{def:match}
Two strongly regular semisimple elements $A\in\frs_n(F)$ and $A'\in\fru_n(F)$ are said to {\em match each other}, if they have the same invariants.
\end{defn}

Now we have explained all the notions appearing in Conjecture \ref{cj:OI}. By Proposition \ref{p:GLloc} and \ref{p:Uloc}, the Conjecture \ref{cj:OI}(1)(3) are implied by
\begin{conj}\label{cj:count}
For any strongly regular semisimple collection of invariants $(a,b)$ such that $a_i,b_i\in\calO_E^{\sigma=(-1)^i}$ (in particular we allow arbitrary $b_0\in\calO_F$), we have
\begin{equation}\label{eq:count}
\sum_{i=0}^{\val_F(\Delta_{a,b})}\eta_{E/F}(\varpi)^i\#\sMloc_{i,a,b}=\#\sNloc_{a,b}.
\end{equation}
\end{conj}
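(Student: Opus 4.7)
My plan is to lift the numerical identity \eqref{eq:count} to a cohomological one and prove the latter by globalizing to a Hitchin-type fibration. First, I would observe that $\sMloc_{i,a,b}$ and $\sNloc_{a,b}$ are the $k$-points of moduli schemes $\Mloc_{i,a,b}$ and $\Nloc_{a,b}$ over $k$ (the relevant lattices lie in the finite-length $\calO_F$-module $R_a^\vee/R_a$, so the functors are representable). The sign $\eta_{E/F}(\varpi)^i$ is the Frobenius trace of a natural rank-one local system $\Ql(\eta_{k'/k})^{\otimes i}$ on $\Mloc_{i,a,b}$, built from the unramified quadratic character of $\Gal(k'/k)$; when $E/F$ is split this local system is trivial and both sides simply count lattices. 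By the Grothendieck--Lefschetz trace formula, \eqref{eq:count} follows from an isomorphism of graded $\Frob_k$-modules
$$\bigoplus_{i=0}^{\val_F(\Delta_{a,b})} H^*\bigl(\geom{\Mloc_{i,a,b}}{k},\, \Ql(\eta_{k'/k})^{\otimes i}\bigr) \;\cong\; H^*\bigl(\geom{\Nloc_{a,b}}{k},\, \Ql\bigr),$$
which is precisely the Main Theorem (Local Part).

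To establish the local cohomological statement, I would pass to a global geometric setup. Identify $F$ with the completed local field at a closed point $x_0$ of a smooth projective curve $X/k$ and, on $X$ together with an \'etale double cover $X'\to X$, construct the Hitchin-type stacks $\calM_i$ and $\calN$ and the Hitchin maps $\fh_i:\Mh_i\to\Adel\times\Bh$, $\gh:\Nh\to\Adel\times\Bh$. The essential geometric inputs, to be supplied by Propositions \ref{p:smGL}, \ref{p:smU}, and \ref{p:small}, are that for sufficiently large $D,D_0$ the total spaces are smooth and the Hitchin maps are proper and small over $\Adel\times\Bh$; consequently $\bigoplus_i \fh_{i,*}L_{d-i}$ and $\gh_*\Ql$ are both middle extensions of local systems from a dense open. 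By the rigidity of middle extensions it then suffices to construct the isomorphism over a dense open locus (e.g.\ the smooth locus $\Asm\times\Bh$) where both sides can be computed in terms of spectral and cameral data; the matching should be governed by a ``binomial expansion'' identity on symmetric powers of $X'/X$ (Lemma \ref{l:bin}), which transparently accounts for the direct-sum decomposition on the $\GL_n$ side. Once the global identity is in hand, apply the product formulae (Propositions \ref{p:pdGL}, \ref{p:pdU}) at a globalization of $(a,b)$ chosen to have controlled local behaviour away from $x_0$; taking Frobenius traces factors each side as $\Tr(\Frob_k,M_{x_0})\prod_{x\neq x_0}\Tr(\Frob_k,M_x)$ and, after cancelling the auxiliary factors, produces the desired local identity at $x_0$.

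Two technical obstacles stand out. Smoothness of $\Mh_i$ is only visible for ``non-extremal'' degrees $i$, so to cover all admissible $i$ one must enlarge $D_0$ and compare the two resulting moduli problems: indices that are extremal for the original $D_0$ become interior for the larger one, and one checks that the statement is compatible with this shift (to be carried out in \S \ref{ss:match}). Second and more seriously, the local-to-global product argument only isolates the factor at $x_0$ after cancelling the factors $\Tr(\Frob_k, N_x)$ at auxiliary points $x\neq x_0$; ensuring these unitary-side factors are all nonzero---by constructing a globalization whose local behaviour at every $x\neq x_0$ is ``easy'' enough that the corresponding cohomology has nonvanishing Frobenius trace---is, I expect, the principal technical hurdle, and is what forces the careful analysis of \S \ref{ss:pre}.
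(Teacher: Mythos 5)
Your proposal follows the paper's own proof strategy almost line for line: geometrize the lattice counts as $k$-points of the schemes $\Mloc_{i,a,b}$ and $\Nloc_{a,b}$ (\S\ref{ss:locmatch}), reduce the identity \eqref{eq:count} to the cohomological isomorphism of Theorem \ref{th:main} via Lefschetz, globalize to the Hitchin-type stacks $\calM_i$ and $\calN$, use smoothness and smallness to show both $\bigoplus_i\fh_{i,*}L_{d-i}$ and $\gh_*\Ql$ are middle extensions, match them over $\Asm\times\Bh$ via the binomial expansion (Lemma \ref{l:bin}), and recover the local identity from the product formula at a well-chosen globalization. You also correctly single out the two real technical hurdles of \S\ref{ss:match} and \S\ref{ss:pre} (extremal degrees, and nonvanishing of the auxiliary unitary factors).

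One small gap, though, is worth naming. Theorem \ref{th:main} carries the hypothesis $\eta_{E/F}(\Delta_{a,b})=1$ (in the nonsplit case, that $\val_F(\Delta_{a,b})$ is even), so the reduction you describe covers only that case, whereas Conjecture \ref{cj:count} makes no such assumption. When $\eta_{E/F}(\Delta_{a,b})\neq1$ (nonsplit, odd valuation of $\Delta_{a,b}$), the identity requires a separate but elementary argument: the right side is zero because $\leng_{\calO_E}(R_a^\vee(\calO_E):R_a(\calO_E))$ is odd so no self-dual lattice exists, and the left side is zero because the involution $\Lambda\mapsto\Lambda^\vee$ pairs $\sMloc_{i,a,b}$ with $\sMloc_{\val_F(\Delta_{a,b})-i,a,b}$, whose contributions cancel in the alternating sum. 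This is exactly Lemma \ref{l:cancel}. You should also record that the entire geometric program presupposes $\char(F)>n$ (needed, e.g., for the Kostant section in \S\ref{ss:pre} and for the codimension estimate on the $\delta$-strata), so what one actually obtains is Corollary \ref{c:main}, not the conjecture for arbitrary $F$. Finally, your last step of ``cancelling the auxiliary factors'' quietly uses both the nonvanishing you flag \emph{and} the equality $\Tr(\Frob,M_x)=\Tr(\Frob,N_x)$ for $x\neq x_0$; the latter holds because $Y_a$ is chosen smooth away from $x_0$, so each $R_{a_x}$ is a product of DVRs and Lemma \ref{l:DVR} applies, and the local constancy of Proposition \ref{p:locconst} is what lets you identify the factor at $x_0$ with the original local data $(a^0,b^0)$.
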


In the rest of this subsection, we prove some easy cases of the Conjecture \ref{cj:count} by straight-forward counting argument.

\begin{lemma}\label{l:cancel}
The Conjecture \ref{cj:count} is true if $\eta_{E/F}(\Delta_{a,b})\neq1$, in which case both sides of (\ref{eq:count}) are zero. In particular, Conjecture \ref{cj:OI}(3) holds.
\end{lemma}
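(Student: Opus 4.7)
The plan is to show both sides of \eqref{eq:count} vanish separately under the hypothesis $\eta_{E/F}(\Delta_{a,b}) \neq 1$. First I would observe that $\eta_{E/F}$ is trivial in the split case, so the hypothesis forces $E/F$ to be the unramified quadratic extension; then $\eta_{E/F}(\varpi) = -1$ and the hypothesis reads $d := \val_F(\Delta_{a,b})$ is odd. For the right-hand side, any $\Lambda' \in \sNloc_{a,b}$ satisfies $\Lambda' = (\Lambda')^\bot$ with $R_a(\calO_E) \subset \Lambda' \subset R_a^\vee(\calO_E)$, forcing $\leng_{\calO_E}(\Lambda'/R_a(\calO_E)) = \leng_{\calO_E}(R_a^\vee(\calO_E)/\Lambda')$; summing gives $\leng_{\calO_E}(R_a^\vee(\calO_E)/R_a(\calO_E)) = 2\leng_{\calO_E}(\Lambda'/R_a(\calO_E))$. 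In the unramified case $\calO_E$ is a DVR with the same uniformizer as $\calO_F$, so this $\calO_E$-length equals $\val_F(\Delta_{a,b}) = d$, which is odd---contradiction. Hence $\sNloc_{a,b} = \emptyset$ (this is essentially Remark \ref{r:evenval} run in reverse), and the RHS is zero.

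For the left-hand side, I would exhibit a length-reversing involution on $\bigsqcup_i \sMloc_{i,a,b}$ using the symmetric $\calO_F$-bilinear pairing $R_a \otimes_{\calO_F} R_a \to \calO_F$, $(x,y) \mapsto b'(xy)$, which after extending $F$-linearly produces a nondegenerate pairing on $R_a(F) \otimes_F R_a(F) \to F$. The dual of a lattice $\Lambda$ in this pairing is $\Lambda^* := \{x \in R_a(F) \mid b'(x\Lambda) \subset \calO_F\}$. By the very definition of $\gamma_{a,b}: R_a \hookrightarrow R_a^\vee$, one has $(R_a)^* = R_a^\vee$ and $(R_a^\vee)^* = R_a$ (after identifying $R_a^\vee$ with its image under $\gamma_{a,b}$), so $R_a \subset \Lambda \subset R_a^\vee$ forces $R_a \subset \Lambda^* \subset R_a^\vee$. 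The $R_a$-stability of $\Lambda^*$ follows immediately from the $R_a$-bilinearity of $(x,y) \mapsto b'(xy)$. The basic length identity $\leng_{\calO_F}(R_a^\vee/\Lambda^*) = \leng_{\calO_F}(\Lambda/R_a)$ for perfect pairings shows that $\Lambda \mapsto \Lambda^*$ restricts to a bijection $\sMloc_{i,a,b} \isom \sMloc_{d-i,a,b}$.

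Using this bijection, I would pair index $i$ with $d-i$ in the LHS. Since $d$ is odd, no index is self-paired, and the combined coefficient $(-1)^i + (-1)^{d-i} = (-1)^i(1 + (-1)^d) = 0$. Each pair therefore contributes zero, so the LHS vanishes. The only real point to verify is that $\Lambda^*$ is genuinely an $R_a$-submodule lying between $R_a$ and $R_a^\vee$; this is where the particular construction of $\gamma_{a,b}$ out of the $R_a$-linear functional $b'$ is used in an essential way. The remainder is bookkeeping.
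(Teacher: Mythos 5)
Your proof is correct and follows essentially the same route as the paper: both show the RHS vanishes because an odd $\calO_E$-length between $R_a(\calO_E)$ and $R_a^\vee(\calO_E)$ precludes self-dual lattices, and both kill the LHS by the length-reversing involution $\Lambda \mapsto \Lambda^*$ (the paper writes it as $\Lambda \mapsto \Lambda^\vee = \Hom_{\calO_F}(\Lambda,\calO_F)$ transported into $R_a(F)$ via $\gamma_{a,b}$, which is the same thing as your bilinear-form dual). The only difference is cosmetic: you unwind $\gamma_{a,b}$ into the explicit pairing $b'(xy)$ and check $(R_a)^*=R_a^\vee$ directly, whereas the paper invokes the abstract linear dual.
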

\begin{proof}
The situation $\eta_{E/F}(\Delta_{a,b})\neq1$ happens if and only if $E/F$ is nonsplit and $\val_F(\Delta_{a,b})=\leng_{\calO_F}(R_a^\vee:R_a)$ is odd. In this case, $\leng_{\calO_E}(R_a^\vee(\calO_E):R_a(\calO_E))$ is odd, therefore there are no self-dual lattices in $R_a(E)$, i.e., $\sNloc_{a,b}=\varnothing$.

Now we show that the LHS of (\ref{eq:count}) is also zero. For an $\calO_F$-lattice $\Lambda\in R_a(F)$, the linear dual $\Lambda^\vee=\Hom_{\calO_F}(\Lambda,\calO_F)$ can be naturally viewed as another $\calO_F$-lattice of $R_a(F)$ via the identification $\gamma_{a,b}:R_a(F)\isom R_a^\vee(F)$. It is easy to check that if $\Lambda$ is stable under multiplication by $R_a$,  the same is true for $\Lambda^\vee$. This operation sets up a bijection
\begin{equation*}
(-)^\vee:\sMloc_{i,a,b}\isom\sMloc_{\val_F(\Delta_{a,b})-i,a,b}.
\end{equation*}
Since $E/F$ is nonsplit and $\val_F(\Delta_{a,b})$ is odd, then by Proposition \ref{p:GLloc}
\begin{equation*}
\sum_i(-1)^i\#\sMloc_{i,a,b}=\sum_{i=0}^{\lfloor\val_F(\Delta_{a,b})/2\rfloor}(-1)^i(\#\sMloc_{i,a,b}-\#\sMloc_{\val_F(\Delta_{a,b})-i,a,b})=0.
\end{equation*}
This completes the proof.
\end{proof}

\begin{remark}
In \cite{JR}, Jacquet-Rallis showed that every strongly regular semisimple $A'\in\fru_n(F)$ matches some $A\in\frs_n(F)$; conversely, a strongly regular semisimple $A\in\frs_n(F)$ matches some $A'\in\fru_n(F)$ if and only if $\eta_{E/F}(\Delta_{a,b})=1$, where $(a,b)$ are the invariants of $A$. We have seen from Lemma \ref{l:cancel} that if $A\in\frs_n(F)$ does not match any element in $\fru_n(F)$, then $\OI^{\GL_{n-1},\eta}_A(\one_{\frs_n}(\calO_F))=0$. This vanishing result was also conjectured in \cite{JR}.
\end{remark}

\begin{lemma}\label{l:split}
The Conjecture \ref{cj:count} is true if $E/F$ is split.
\end{lemma}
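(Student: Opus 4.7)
When $E/F$ is split, $\eta_{E/F}$ is trivial, so the identity to prove reduces to
\begin{equation*}
\sum_{i=0}^{\val_F(\Delta_{a,b})}\#\sMloc_{i,a,b}=\#\sNloc_{a,b},
\end{equation*}
and I plan to identify both sides with a single counting problem attached to one \'etale $\calO_F$-algebra of rank $n$. Write $\calO_E=\calO_F\times\calO_F$ with $\sigma$ the coordinate swap. Then $\calO_E[t]=\calO_F[t]\times\calO_F[t]$, and the constraint $\sigma(a_i)=(-1)^ia_i$ forces the defining polynomial of $R_a(\calO_E)$ to factor as $P_+(t)P_-(t)$ with $P_-(t)=(-1)^nP_+(-t)$. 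Setting $R^\pm=\calO_F[t]/P_\pm(t)$, the map $\widetilde\tau:R^\pm\isom R^\mp$ sending $t\mapsto -t$ is an $\calO_F$-algebra isomorphism, and $\sigma_R$ is the swap composed with $\widetilde\tau$. Taking fixed points gives $R_a\cong R^+$ via the first projection, and accordingly $R_a^\vee\cong R^{+\vee}$.

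The key observation is that $b'=(b'_+,b'_-)$ with $b'_\pm:R^\pm\to\calO_F$ determined by $b'_\pm(t^i)=b_{i,\pm}$, and the constraint $\sigma(b_i)=(-1)^ib_i$ forces $b'_-\circ\widetilde\tau=b'_+$. Under the above identifications, the map $\gamma_{a,b}$ becomes $\gamma_+:R^+\to R^{+\vee}$, $x\mapsto(y\mapsto b'_+(xy))$, and $\val_F(\Delta_{a,b})=\leng_{\calO_F}(R^{+\vee}:R^+)$. The $\frs_n$-side is then tautological: $\sMloc_{i,a,b}$ is exactly the set of $R^+$-stable lattices $M$ with $R^+\subset M\subset R^{+\vee}$ and $\leng_{\calO_F}(R^{+\vee}:M)=i$.

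For the $\fru_n$-side, any $R_a(\calO_E)$-stable lattice $\Lambda'\subset R_a(E)$ splits as $\Lambda'_+\oplus\Lambda'_-$ with $\Lambda'_\pm\subset R^\pm(F)$. A direct computation using $\sigma_R(y_+,y_-)=(\widetilde\tau(y_-),\widetilde\tau(y_+))$ and $b'_-\circ\widetilde\tau=b'_+$ rewrites the Hermitian form \eqref{eq:HermR} as the pairing between $\Lambda'_+$ and $\widetilde\tau(\Lambda'_-)$ through $b'_+$. Consequently the self-duality condition $\Lambda'^\bot=\Lambda'$ is equivalent to $\widetilde\tau(\Lambda'_-)=M^{*,b'_+}$ where $M:=\Lambda'_+$ and $M^{*,b'_+}:=\{x\in R^+(F):b'_+(xM)\subset\calO_F\}$. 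In particular, $\Lambda'$ is completely determined by $M$. A short check (using $\widetilde\tau(R^{-\vee})=R^{+\vee}$ and the involutivity of $b'_+$-duality, which is valid because $(a,b)$ is strongly regular semisimple so $\gamma_+$ is an isomorphism) shows that the condition $R_a(\calO_E)\subset\Lambda'\subset R_a^\vee(\calO_E)$ on $\Lambda'$ is equivalent to the single condition $R^+\subset M\subset R^{+\vee}$, with no length constraint.

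Summing $\sMloc_{i,a,b}$ over $i$ therefore recovers exactly $\sNloc_{a,b}$, giving the desired identity. The only nontrivial step is the self-duality translation in the third paragraph; once the two involutive identities $\widetilde\tau^2=\id$ and $b'_-\circ\widetilde\tau=b'_+$ are in place, the rest is bookkeeping in split-case coordinates.
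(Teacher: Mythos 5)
Your proposal is correct and follows essentially the same route as the paper: split $E\cong F\oplus F$, decompose $R_a(E)$ into two factors, rewrite the Hermitian form in coordinates to see that self-duality forces one summand of $\Lambda'$ to be the $\gamma_{a,b}$-dual of the other, and conclude the bijection $\coprod_i\sMloc_{i,a,b}\isom\sNloc_{a,b}$. The paper gets there a little faster by observing directly that $R_a(E)=R_a\otimes_{\calO_F}E\cong R_a(F)\oplus R_a(F)$ (with $\sigma_R$ the coordinate swap), which avoids introducing the auxiliary rings $R^\pm$ and the transfer map $\widetilde\tau$ and also sidesteps the minor imprecision in your write-up (the characteristic polynomial does not ``factor as $P_+P_-$''; rather it is the pair $(P_+,P_-)\in\calO_F[t]\times\calO_F[t]$, so the quotient algebra, not the polynomial, decomposes).
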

\begin{proof}
Fix an isomorphism $E\cong F\oplus F$ such that $\sigma$ interchanges the two factors. Using this, we can identify $R_a(E)$ is with $R_a(F)\oplus R_a(F)$ and the Hermitian form $(\cdot,\cdot)_R$ takes the form:
\begin{equation*}
(x\oplus y,x'\oplus y')_R=b(xy')\oplus b(x'y)\in E,\hspace{1cm}x,y,x',y'\in R_a(F).
\end{equation*}
Therefore each $R_a(\calO_E)$-lattice $R_a(\calO_E)\subset\Lambda'\subset R_a^\vee(\calO_E)$ has the form $\Lambda'=\Lambda_1\oplus\Lambda_2$, with $R_a\subset\Lambda_i\subset R_a^\vee$. The self-duality requirement is equivalent to $\Lambda_2=\gamma_{a,b}^{-1}(\Lambda_1^\vee)$ (note that $\Lambda_1^\vee\subset R_a^\vee(F)$ and recall the isomorphism $\gamma_{a,b}:R_a(F)\isom R_a^\vee(F)$). In this way, we get a bijection
\begin{eqnarray*}
\coprod_{i=0}^{\val_F(\Delta_{a,b})}\sMloc_{i,a,b}&\isom&\sNloc_{a,b}\\
\Lambda&\mapsto&\Lambda\oplus\gamma_{a,b}^{-1}(\Lambda^\vee).
\end{eqnarray*}
Therefore,
\begin{equation*}
\sum_{i}\#\sMloc_{i,a,b}=\#\sNloc_{a,b},
\end{equation*}
which verifies Conjecture \ref{cj:count} in the split case because $\eta_{E/F}$ is trivial.
\end{proof}

\begin{lemma}\label{l:DVR}
The Conjecture \ref{cj:count} is true if $R_a$ is a product of DVRs.
\end{lemma}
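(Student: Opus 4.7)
\emph{Reduction to a single DVR.} Write $R_a = \prod_{j=1}^r R_j$ with each $R_j$ a DVR. Because $\gamma_{a,b}$ is $R_a$-linear, the factors of $R_a$ are mutually orthogonal under the pairing (\ref{eq:pairing}), so $R_a^\vee = \prod_j R_j^\vee$ and every $R_a$-stable lattice $R_a \subset \Lambda \subset R_a^\vee$ decomposes uniquely as $\Lambda = \prod_j \Lambda_j$ with $R_j \subset \Lambda_j \subset R_j^\vee$, the relative length being additive over $j$. Similarly $R_a(\calO_E) = \prod_j R_j(\calO_E)$, and the Hermitian form (\ref{eq:HermR}) is an orthogonal direct sum of its restrictions to the factors, so self-dual lattices decompose as products. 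Consequently, letting $P_j(t) = \sum_i \#\{\Lambda_j : R_j \subset \Lambda_j \subset R_j^\vee,\ \leng_{\calO_F}(R_j^\vee:\Lambda_j) = i\}\, t^i$ and writing $N_j$ for the analogous count of self-dual lattices in the $j$-th factor, the LHS of (\ref{eq:count}) equals $\prod_j P_j(\eta_{E/F}(\varpi))$ while $\#\sNloc_{a,b} = \prod_j N_j$. It thus suffices to treat the case where $R_a$ is itself a DVR.

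\emph{The single-DVR case.} Let $f = [k_a : k]$ and $s = \leng_{R_a}(R_a^\vee/R_a)$, so that $\val_F(\Delta_{a,b}) = sf$. The lattices $R_a \subset \Lambda \subset R_a^\vee$ are exactly $\pi_a^{-k} R_a$ for $0 \le k \le s$, with $\leng_{\calO_F}(R_a^\vee : \pi_a^{-k} R_a) = (s-k) f$. By Lemma \ref{l:split} we may assume $E/F$ is nonsplit, so the LHS of (\ref{eq:count}) equals $\sum_{k=0}^s (-1)^{(s-k)f}$. If $f$ is odd, equivalently $E \not\subset \Frac(R_a)$, then $R_a(\calO_E)$ is a DVR sharing the uniformizer $\pi_a$ (since $E/F$ is unramified); the Hermitian dual of $\pi_a^{-k} R_a(\calO_E)$ equals $\pi_a^{k-s} R_a(\calO_E)$ because $\sigma_R$ fixes $\pi_a$ and preserves $R_a(\calO_E)$, so self-duality forces $k = s/2$, yielding $\#\sNloc_{a,b} = 1$ if $s$ is even and $0$ otherwise, matching $\sum_{k=0}^s (-1)^k$. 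If $f$ is even, equivalently $E \subset \Frac(R_a)$, then $R_a(\calO_E) = R' \times R''$ with $\sigma_R$ swapping the two factors; the argument of Lemma \ref{l:split} applied locally yields the bijection $\Lambda \mapsto \Lambda \oplus \gamma_{a,b}^{-1}(\Lambda^\vee)$ from $\coprod_i \sMloc_{i,a,b}$ onto $\sNloc_{a,b}$, giving $\#\sNloc_{a,b} = s+1 = \sum_{k=0}^s (-1)^{kf}$.

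\emph{Main obstacle.} The principal technical point is verifying that both (\ref{eq:pairing}) and the Hermitian form (\ref{eq:HermR}) decompose orthogonally along $R_a = \prod_j R_j$: the $R_a$-(semi)linearity of these forms, combined with the mutually orthogonal idempotents of $R_a$, forces each factor to pair only with itself, and hence the product decompositions of $\sMloc$ and $\sNloc$ are compatible with all the structures involved. Once this is set up, the single-DVR computation is a routine bookkeeping of lengths and uniformizers, directly modeled on the proof of Lemma \ref{l:split}.
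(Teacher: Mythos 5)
Your proof is correct and follows essentially the same route as the paper's: reduce to a single DVR, handle the split case by Lemma~\ref{l:split}, enumerate the lattices as $\varpi_R^{-j}R_a$ and case-split on the parity of the residue degree $f=[k(R_a):k]$. The main difference is cosmetic — you spell out the orthogonality underlying the product decomposition (which the paper dismisses with ``it is clear''), and you fold the odd-$\val_F(\Delta_{a,b})$ case directly into the $f$-odd computation rather than citing Lemma~\ref{l:cancel} as the paper does.
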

\begin{proof}
By reducing to the lattice-counting problems, it is clear that it suffices to deal with the case $R_a$ is a DVR. Since we already dealt with the split case, we may assume $E/F$ is nonsplit. Let $k(R_a)$ be the residue field of $R_a$ and $\varpi_R$ be a uniformizing parameter of $R_a$. Then $R_a^\vee=\varpi_R^{-d}R_a$ for some integer $d=\leng_{R_a}(R^\vee_a:R_a)$. We have
\begin{equation}\label{eq:leng}
\val_F(\Delta_{a,b})=\leng_{\calO_F}(R^\vee_a:R_a)=d[k(R_a):k]
\end{equation}
We already solve the case when $\val_F(\Delta_{a,b})$ is odd in Lemma \ref{l:cancel} and when $E/F$ is split in Lemma \ref{l:split}. Now suppose $\val_F(\Delta_{a,b})$ is even and $E/F$ is nonsplit. In this case, either $d$ or $[k(R_a):k]$ has to be even. Let us explicitly count the cardinalities of $\sMloc_{i,a,b}$ and $\sNloc_{a,b}$. 

On one hand, the only $R_a$-lattices which sit between $R_a$ and $R_a^\vee$ are $\varpi_R^{-j}R_a$ for $0\leq j\leq d$, and $\leng_{\calO_F}(R_a^\vee:\varpi_R^{-j}R_a)=(d-j)[k(R_a):k]$. Therefore
\begin{equation*}
\sum_i(-1)^i\#\sMloc_{i,a,b}=\sum_{j=0}^{d}(-1)^{(d-j)[k(R_a):k]}=\left\{
\begin{array}{ll}
d+1 & [k(R_a):k]\textup{ even;}\\ 
1 & [k(R_a):k]\textup{ odd.}
\end{array}\right.
\end{equation*}

On the other hand, if $[k(R_a):k]$ is odd and $d$ is even, $R_a(\calO_E)$ remains a DVR, therefore $\varpi_R^{-d/2}R_a(\calO_E)$ is the unique self-dual $R_a(\calO_E)$ lattice between $R_a(\calO_E)$ and $R_a^\vee(\calO_E)=\varpi_R^{-d}R_a(\calO_E)$. If $[k(R_a):k]$ is even, then we can identify $R_a(\calO_E)\cong R_a\oplus R_a$ such that $\sigma_R$ acts by interchanging the two factors. In this case $\sNloc_{a,b}$ consists of lattices $\varpi_R^{-j}R_a\oplus\varpi_R^{-d+j}R_a$ for $0\leq j\leq d$. In any case, we have
\begin{equation*}
\sum_{i}(-1)^i\#\sMloc_{i,a,b}=\#\sNloc_{a,b}.
\end{equation*}
\end{proof}

\subsection{From the Lie algebra version to the group version}\label{ss:gptoalg}
As mentioned in the Introduction, it is the group version identity \eqref{eq:gpOI} which is directly relevant to Jacquet-Rallis's approach to the Gross-Prasad conjecture for the unitary groups. In this subsection we deduce the group version \eqref{eq:gpOI} from the Lie algebra version \eqref{eq:OI}. The same argument also shows that \eqref{eq:gpOI'} follows from \eqref{eq:algOI'}. 

For an element $A\in\GL_n(E)$, viewed as an element in $\frgl_n(E)$, the invariants $a_i(A),b_i(A)$ and $v(A)$ are defined as in \S \ref{ss:inv}. When $a_i,b_i\in\calO_E$, we introduce the $\calO_E$-algebra
\begin{equation}
\bR_a(\calO_E)=\calO_E[t,t^{-1}]/(t^n-a_1t^{n-1}+\cdots+(-1)^na_n).
\end{equation}
We view $Z'_a=\Spec\bR_a(\calO_E)$ as a subscheme of $\Spec\calO_E\times\GG_m$ which is finite flat over $\Spec\calO_E$ of degree $n$. Let $\theta$ be the involution on $\Spec\calO_E\times\GG_m$ which is the product of $\sigma$ on $\calO_E$ and $t\mapsto t^{-1}$ on $\GG_m$. The fixed point subcsheme under $\theta$ is the unitary group $\Ug_{\calO_E/\calO_F}(1)$ under $\Spec\calO_F$.

Recall that $\Sg_n(\calO_F)=\{A\in\GL_n(\calO_E)|A\sigma(A)=1\}$. For an element $A$ in either $S_n(\calO_F)$ or $\Ug_n(\calO_F)$, the subscheme $Z'_a$ is stable under $\theta$, hence determining a subscheme $Z_a\subset\Ug_{\calO_E/\calO_F}(1)$, finite flat of degree $n$ over $\Spec\calO_F$. Let $\bR_a$ be the coordinate ring of $Z_a$, which is a finite flat $\calO_F$-algebra of rank $n$ satisfying $\bR_a\otimes_{\calO_F}\calO_E=\bR_a(\calO_E)$. The invariants $b_i$ determines an $\bR_a$-linear map $\gamma_{a,b}:\bR_a\to\bR_a^\vee$, as in \eqref{eq:gamma}.

\begin{prop}\label{p:gptoalg}
Conjecture \ref{cj:OI}(2) follows from Conjecture \ref{cj:OI}(1).
\end{prop}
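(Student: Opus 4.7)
The plan is to parallel \S\ref{ss:OIGL} and \S\ref{ss:OIU} in the group setting, reducing \eqref{eq:gpOI} to a lattice-counting identity over $\bR_a$, then transporting that identity to a Lie-algebra instance of Conjecture \ref{cj:count} via a Cayley substitution on $\bR_a$.

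First I would establish group analogs of Propositions \ref{p:GLloc} and \ref{p:Uloc}: for strongly regular semisimple $A\in\Sg_n(F)$ and matching $A'\in\Ug_n(F)$ with integral invariants $(a,b)$, define $\bar M^{\textup{loc}}_{i,a,b}$ to be the set of $\bR_a$-lattices $\Lambda$ with $\bR_a\subset\Lambda\subset\bR_a^\vee$ and $\leng_{\calO_F}(\bR_a^\vee:\Lambda)=i$, and $\bar N^{\textup{loc}}_{a,b}$ the set of self-dual $\bR_a(\calO_E)$-lattices between $\bR_a(\calO_E)$ and $\bR_a^\vee(\calO_E)$. The same cyclic-module argument used in the Lie-algebra case goes through verbatim, with the relation $\sigma(A)=A^{-1}$ (which forces $A^{-1}\in\GL_n(\calO_E)$ since $A\in\Sg_n(\calO_F)$) replacing $\sigma(A)=-A$; the integrality of the isomorphism $\iota:\bR_a(F)\isom V(F)$ is preserved. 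One obtains
\begin{equation*}
\OI_A^{\GL_{n-1},\eta}(\one_{\Sg_n(\calO_F)})=\eta_{E/F}(\varpi)^{v(A)}\sum_i\eta_{E/F}(\varpi)^i\#\bar M^{\textup{loc}}_{i,a,b},
\end{equation*}
together with $\OI_{A'}^{\Ug_{n-1}}(\one_{\Ug_n(\calO_F)})=\#\bar N^{\textup{loc}}_{a,b}$, so \eqref{eq:gpOI} becomes the identity $\sum_i\eta_{E/F}(\varpi)^i\#\bar M^{\textup{loc}}_{i,a,b}=\#\bar N^{\textup{loc}}_{a,b}$.

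Second, I would reduce this identity to an instance of the Lie-algebra Conjecture \ref{cj:count} via a Cayley substitution. Since $\bR_a(E)$ is \'etale over $E$, $\bR_a(\calO_E)$ decomposes as a product of local factors, and the lattice counts, the pairing, and the self-duality condition split multiplicatively over this product; thus it suffices to treat one local factor at a time. On a local factor with residue value $\bar t$, perform the substitution $s=(t-c)/(t+c)$ with $c\in\{\pm1\}$ chosen so that $\bar t+c\ne 0$ (available because $q$ is odd). Because $c^2=1$ one computes
\begin{equation*}
\theta(s)=(t^{-1}-c)/(t^{-1}+c)=(1-ct)/(1+ct)=-(t-c)/(t+c)=-s,
\end{equation*}
so $s$ is integral, $\theta$-anti-invariant, and (since $t=c(1+s)/(1-s)$ with $1-s$ a unit on the factor) generates the factor over $\calO_E$. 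Its minimal polynomial yields local Lie-algebra invariants $\tilde a^{\textup{loc}}$ with $\tilde a^{\textup{loc}}_i\in\calO_E^{\sigma=(-1)^i}$, and expanding the Hermitian pairing $(x,y)\mapsto b'(x\theta(y))$ in the new coordinate produces $\tilde b^{\textup{loc}}$ so that the transported pairing takes the form $(x,y)\mapsto\tilde b'(x\sigma_R(y))$. This identifies the local factor of $(\bR_a,\gamma_{a,b})$ with $(R_{\tilde a^{\textup{loc}}},\gamma_{\tilde a^{\textup{loc}},\tilde b^{\textup{loc}}})$ as $\calO_F$-algebras with involution and pairing, so the local-factor identity is a local instance of Conjecture \ref{cj:count}, implied by Conjecture \ref{cj:OI}(1) via Propositions \ref{p:GLloc} and \ref{p:Uloc}.

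The main obstacle is the residue points where $\bar t=\pm1$: there a single global choice $c=1$ fails, forcing $c$ to depend on the local factor. This is precisely why I would avoid trying to construct a single global Lie-algebra element $\tilde A\in\frs_n(F)$ and instead exploit the multiplicativity of Conjecture \ref{cj:count} over local factors, applying the factor-wise Cayley isomorphism separately. A secondary technical point is verifying that the pairing $b'(x\theta(y))$ pulls back to the claimed form $\tilde b'(x\sigma_R(y))$: this is a direct but not entirely trivial calculation with the coordinate change $t=c(1+s)/(1-s)$, expressing $b_i=b'(t^i)$ in terms of the $\tilde b_j=\tilde b'(s^j)$ via the binomial expansion of $(1+s)^i(1-s)^{-i}$, which is valid over the local factor.
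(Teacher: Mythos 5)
Your first step matches the paper exactly: reduce \eqref{eq:gpOI} by the lattice-counting translation of Propositions~\ref{p:GLloc} and \ref{p:Uloc} with $\bR_a$ in place of $R_a$, and then the goal is to identify the group data $(\bR_a,\gamma_{a,b})$ with some Lie-algebra data $(R_{\tila},\gamma_{\tila,\tilb})$. After that point you diverge from the paper. The paper produces a \emph{single global} isomorphism $\rho:R_{\tila}\isom\bR_a$ of $\calO_F$-algebras: since the special fiber of $Z_a$ is a finite subscheme of the smooth curve $\Ug_{k'/k}(1)$, it embeds in $\AA^1_k$; this lifts by Nakayama to a monic presentation $R^F_c\isom\bR_a$, and twisting by a purely imaginary unit $\jmath$ gives $R_{\tila}\isom\bR_a$. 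Crucially, once $\rho$ exists with $\rho_{\calO_E}(\tilde t)$ a $\theta$-anti-invariant generator, the $\tilb$ it forces automatically satisfies $\tilb_j\in\calO_E^{\sigma=(-1)^j}$, so no explicit binomial computation is needed. You instead decompose $\bR_a$ into its local $\calO_F$-factors and apply an explicit Cayley transform $s=(t-c)/(t+c)$ on each, with $c\in\{\pm1\}$ chosen factor-by-factor, and then invoke multiplicativity of Conjecture~\ref{cj:count}. Your route works, and the Cayley transform is a nice concrete realization of exactly the abstract fact the paper uses (that finite subschemes of a smooth curve embed in $\AA^1$); your check that $\theta(s)=-s$ is correct. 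But two things should be made explicit: (i) Conjecture~\ref{cj:count} is multiplicative over the factorization $\bR_a\cong\prod_j\bR_{a,j}$ — both $\sMloc$ and $\sNloc$ split as products, so the identity factors, but this is nowhere stated in your write-up; and (ii) the local factors have rank $n_j<n$, so your reduction uses Conjecture~\ref{cj:OI}(1) for \emph{all} ranks $\leq n$, a slight weakening compared with the paper, which stays at rank $n$ throughout. Also, the decomposition you actually need is by local factors of $\bR_a$ (equivalently, by $\theta$-orbits of local factors of $\bR_a(\calO_E)$): for $s$ to descend to the $\calO_F$-form, the choice of $c$ must be constant on each $\theta$-orbit. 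This is automatic — since $c^2=1$ the condition $\bar t+c\neq 0$ is $\theta$-invariant — but you should say so, since otherwise the factor-wise choices might appear to break $\theta$-equivariance.
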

\begin{proof}
Using the same argument for Propositions \ref{p:GLloc} and \ref{p:Uloc}, we reduce the orbital integrals in \eqref{eq:gpOI} to counting of points in the corresponding sets $\bM^{\textup{loc}}_{i,a,b}$ and $\bN^{\textup{loc}}_{a,b}$, defined using $\bR_a$ instead of $R_a$. Therefore, it suffices to find $\tila_i,\tilb_i\in\calO_E^{\sigma=(-1)^i}$ and an isomorphism of $\calO_F$-algebras $\rho:R_{\tila}\isom\bR_a$ such that the following diagram is commutative:
\begin{equation}\label{eq:gammacomm}
\xymatrix{R_{\tila}\ar[r]^{\gamma_{\tila,\tilb}}\ar[d]^{\rho}_{\wr} & R_{\tila}^\vee\\
\bR_a\ar[r]^{\gamma_{a,b}} & \bR^\vee_a\ar[u]_{\rho^\vee}^{\wr}}
\end{equation}
Moreover, once we find $\rho:R_{\tila}\isom\bR_{a}$, the choice of $\tilb$ is uniquely determined by the diagram \eqref{eq:gammacomm}, because the data of $\tilb$ and $\gamma_{\tila,\tilb}$ determine each other, as seen in \eqref{eq:bprime} and \eqref{eq:pairing}. Therefore we only need to find $\tila$ such that $\bR_a$ is isomorphic to $R_{\tila}$.

Consider the special fiber of the $\calO_F$-scheme $Z_a$, which is a finite subscheme of $\Ug_{k'/k}(1)$ of degree $n$. Since $\Ug_{k'/k}(1)$ is a smooth curve, any subscheme of it can be embedded into $\AA^1_{k}$. In other words, there is a surjection of algebras $k[s]\twoheadrightarrow\bR_a\otimes_{\calO_F}k$. Lifting the image of the generator $s$ to an element of $\bR_a$, we get a surjection of $\calO_F$-algebras $\calO_F[s]\twoheadrightarrow\bR_a$ (surjectivity follows from Nakayama's lemma). In other words, $Z_a$ can be embedded as a subscheme of $\Spec\calO_F\times\AA^1$. It is well-known that any such finite flat $\calO_F$-subscheme of $\Spec\calO_F\times\AA^1$ is defined by one equation of the form $t^n-c_1t^{n-1}+\cdots+(-1)^{n}c_n$ for some $c_i\in\calO_F$, i.e., there is an isomorphism
\begin{equation}\label{RFc}
R^F_c:=\calO_F[s]/(t^n-c_1t^{n-1}+\cdots+(-1)^{n}c_n)\isom\bR_a.
\end{equation}
Let $\jmath\in \calO_E^-\cap\calO_E^\times$ be a purely imaginary unit element and let $\tila_i=\jmath^ic_i\in\calO_E^{\sigma=(-1)^{i}}$, then we have an isomorphism $R_{\tila}\isom R^F_c$ by sending $t\mapsto\jmath^{-1}s$. Composing with \eqref{RFc}, we get the desired isomorphism of $\calO_F$-algebras $\rho:R_{\tila}\isom\bR_a$. This completes the proof.
\end{proof}

\subsection{Geometric reformulation}\label{ss:locmatch}
In this subsection, we assume $\char(F)=\char(k)$. In this case, we can interpret the sets $\sMloc_{i,a,b}$ and $\sNloc_{a,b}$ as $k$-points of certain schemes.

Fix a strongly regular semisimple pair $(a,b)$ such that $a_i,b_i\in\calO_E^{\sigma=(-1)^i}$. For $0\leq i\leq\val_F(\Delta_{a,b})$, consider the following functor
\begin{equation*}
S\mapsto\left\{
\begin{array}{l|l}
R_{a}\boxtimes_{k}\calO_S-& R_{a}\boxtimes_{k}\calO_S\subset\Lambda\subset R_{a}^\vee\boxtimes_{k}\calO_S,\\
\textup{modules }\Lambda_x & R_{a}^\vee\boxtimes_{k}\calO_S/\Lambda\textup{ is a vector bundle of rank }i\textup{ over }S
\end{array}\right\}.
\end{equation*}
It is clear that this functor is represented by a projective scheme $\Mloc_{i,a,b}$ over $k$, and
\begin{equation*}
\sMloc_{i,a,b}=\Mloc_{i,a,b}(k).
\end{equation*}

Similarly, we have a projective scheme $\Nloc_{a,b}$ over $k$ representing the functor
\begin{equation*}
S\mapsto\left\{
\begin{array}{l|l}
\textup{Self-dual }R_{a}(\calO_x)\boxtimes_{k} & R_{a}(\calO_E)\boxtimes_{k}\calO_S\subset\Lambda'\subset R_{a}^\vee(\calO_E)\boxtimes_{k}\calO_S,\\
\calO_S\textup{-modules }\Lambda'& R_{a}^\vee(\calO_E)\boxtimes_{k}\calO_S/\Lambda'\textup{ is a vector bundle over }\calO_S
\end{array}\right\}
\end{equation*}
We also have
\begin{equation*}
\sNloc_{a,b}=\Nloc_{a,b}(k).
\end{equation*}

Let $\ell$ be a prime number different from $\char(k)$. Let $\Ql(\eta_{k'/k})$ be the rank one $\Ql$-local system on $\Spec k$ associated to the extension $k'/k$: it is trivial if $E/F$ is split and has order two otherwise.

The local part of the main theorem of the paper is:
\begin{theorem}\label{th:main} Suppose $\char(F)=\char(k)>\max\{n,2\}$ and $\eta_{E/F}(\Delta_{a,b})=1$. Then there is an isomorphism of graded $\Frob_k$-modules:
\begin{equation}\label{eq:main}
\bigoplus_{i=0}^{\val_F(\Delta_{a,b})}H^*(\geom{\Mloc_{i,a,b}}{k},\Ql(\eta_{k'/k})^{\otimes i})\cong H^*(\geom{\Nloc_{a,b}}{k},\Ql)
\end{equation}
\end{theorem}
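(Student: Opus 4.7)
The plan is to deduce Theorem \ref{th:main} from the Main Theorem (Global Part) by inserting the given local data at a single closed point of a carefully chosen global curve. Let $(a^0,b^0)$ denote the given strongly regular semisimple local invariants with $\eta_{E/F}(\Delta_{a^0,b^0})=1$. Realize $F$ as the completion at a closed point $x_0\in X$ of some smooth projective geometrically connected curve $X/k$, and realize $E/F$ via the localization at $x_0$ of an \'etale double cover $X'\to X$. The first step is to produce global invariants $(a,b)\in(\calA\times\calB)(k)$ such that (i) the Taylor expansion of $(a,b)$ at $x_0$ coincides with $(a^0,b^0)$, and (ii) at every other closed point $x\neq x_0$ of $X$ the local discriminant $\Delta_{a_x,b_x}$ is a unit in $\calO_{X,x}$. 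With $\deg(D)$ and $\deg(D_0)$ chosen sufficiently large (as required by Proposition \ref{p:small} for $\delta:=\val_F(\Delta_{a^0,b^0})$, and by Propositions \ref{p:pdGL} and \ref{p:pdU}), this places $(a,b)$ in $\Adel\times\Bh$ with total Serre invariant supported at $x_0$.

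Apply the Main Theorem (Global Part) and take stalks of both sides of \eqref{eq:gl} at the $k$-point $(a,b)\in\Adel\times\Bh$. By proper base change, the stalks compute the cohomology of the fibers $\calM_{i,a,b}$ and $\calN_{a,b}$ (with the restriction of $L_{d-i}$). This yields an isomorphism of graded $\Frob_k$-modules
\begin{equation*}
\bigoplus_{i=-d}^{d} H^*(\calM_{i,a,b},\, L_{d-i}|_{\calM_{i,a,b}}) \cong H^*(\calN_{a,b}).
\end{equation*}
Next, combine this with the product formulae of Propositions \ref{p:pdGL} and \ref{p:pdU} and the K\"unneth formula to express both sides as tensor products indexed by the closed points $x\in X$. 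By construction, at every $x\neq x_0$ the pair $(a_x,b_x)$ has $\Delta_{a_x,b_x}$ a unit, so the local moduli $\calM^x_{i_x,a_x,b_x}$ is a single reduced point for $i_x=0$ and empty otherwise, and $\calN^x_{a_x,b_x}$ is a single point. Each tensor factor at $x\neq x_0$ is therefore canonically $\Ql$ concentrated in degree $0$ with trivial Frobenius, and these factors cancel on both sides of the isomorphism in the symmetric monoidal category of graded $\Frob_k$-modules.

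What survives is the factor at $x_0$. The product formula also expresses the restriction of $L_{d-i}$ at $x_0$ as $\Ql(\eta_{k'/k})^{\otimes i_{x_0}}$ pulled back from $\Spec k$: indeed $L_{d-i}$ is built from the local system associated to the \'etale double cover $X'\to X$ (Notation \ref{n:loc}), and at the point $x_0$ this specializes to the Galois character of $k'/k$. Identifying the local moduli at $x_0$ with $\Mloc_{i,a^0,b^0}$ and $\Nloc_{a^0,b^0}$ then delivers the desired isomorphism \eqref{eq:main}.

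The principal obstacle lies in the globalization step: one needs a global $(a,b)$ whose local germ matches $(a^0,b^0)$ at $x_0$ and whose global discriminant divisor is supported entirely at $x_0$. Naive interpolation via Riemann--Roch easily matches the Taylor expansion at $x_0$, but enforcing $\Delta_{a_x,b_x}\in\calO_{X,x}^\times$ at every other point is delicate and requires a careful ample-deformation construction together with the hypothesis $\char(F)>n$ (which ensures \'etaleness of the relevant $R_a$); this is the technical content of \S \ref{ss:pre}. A secondary obstacle is that Proposition \ref{p:smGL} supplies smoothness of $\Mh_i$ only for non-extremal indices, so to apply the Global Main Theorem to all indices $i$ that appear non-trivially in the direct sum \eqref{eq:main}, one must enlarge $D_0$ to push every relevant index into the non-extremal range and then compare the resulting moduli spaces for two choices of $D_0$, following the strategy described in \S \ref{ss:match}.
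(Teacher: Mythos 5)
Your globalization step (ii) is impossible, and this gap propagates into the cancellation argument. The discriminant divisor $\div(a,b)$ has degree exactly $2d = n(n-1)\deg(D) + 2n\deg(D_0)$, so $\sum_{x}[k(x):k]\,\val_{F_x}(\Delta_{a_x,b_x}) = 2d$, which grows without bound as $\deg(D), \deg(D_0)$ grow (and they must be large for Proposition \ref{p:small} to apply). On the other hand, requiring $(a,b)$ to agree with $(a^0,b^0)$ to high order at $x_0$ forces $\val_{F}(\Delta_{a_{x_0},b_{x_0}}) = \val_F(\Delta_{a^0,b^0})$, a fixed number. Hence the discriminant has support of degree $2d - \val_F(\Delta_{a^0,b^0}) > 0$ away from $x_0$, and you cannot make $\Delta_{a_x,b_x}$ a unit at all $x\neq x_0$. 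Consequently the local factors at $x\neq x_0$ are not single reduced points, and the proposed tensor-factor cancellation fails.

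The paper's actual route is genuinely different at this stage. One chooses $a$ so that the spectral curve $Y_a$ is smooth away from $p_a^{-1}(x_0)$ (this \emph{is} achievable by a Bertini argument); this makes $R_{a_x}$ a product of DVRs for $x\neq x_0$, so the local contributions there are handled by Lemma \ref{l:DVR}, not by triviality. The crucial and delicate point you are missing is Lemma \ref{l:nonvan}: one must arrange $b$ so that $\calN^x_{a_x,b_x}\neq\varnothing$ at every $x\neq x_0$, otherwise both sides of the comparison degenerate to $0=0$. This is where the hypothesis $\eta_{E/F}(\Delta_{a^0,b^0})=1$ (evenness of $\val_F(\Delta_{a^0,b^0})$) enters through a class-field-theoretic obstruction argument, and where $\char(k)>n$ is used via the Kostant section — not, as you suggest, merely to guarantee \'etaleness of $R_a$. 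Because the surviving local factors are nontrivial (just of matching nonzero cardinality), the conclusion is obtained by comparing Frobenius traces degree-by-degree, dividing by the common nonzero local products, and running this over all finite extensions $k_m$; the semisimplified $\Frob_k$-isomorphism is then upgraded to an actual isomorphism using Lemma \ref{l:locisom}. Your sketch captures the overall shape (globalize, take stalks, apply product formulae, extract the $x_0$-factor) and correctly flags the non-extremal-index issue, but it omits the nonvanishing lemma, the trace-level argument over all $k_m$, and the final semisimplification step, and the core construction you propose cannot be carried out.
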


Taking Frobenius traces of the isomorphism (\ref{eq:main}), we get
\begin{cor}\label{c:main}
Conjecture \ref{cj:count}, hence Conjecture \ref{cj:OI} is true if $\char(F)=\char(k)>n$.
\end{cor}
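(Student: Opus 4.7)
The plan is to extract Corollary \ref{c:main} from the local part of the Main Theorem (Theorem \ref{th:main}) by a formal Lefschetz-trace computation, and then to repackage the resulting lattice-counting identity (Conjecture \ref{cj:count}) into the full Conjecture \ref{cj:OI} using the reductions already established in \S\ref{ss:FL} and \S\ref{ss:gptoalg}.

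First I would dispose of the case where Theorem \ref{th:main} does not directly apply, namely $\eta_{E/F}(\Delta_{a,b}) \neq 1$. In that range Lemma \ref{l:cancel} has already shown that both sides of \eqref{eq:count} vanish, so Conjecture \ref{cj:count} holds. Henceforth I assume $\eta_{E/F}(\Delta_{a,b}) = 1$, which is the standing hypothesis of Theorem \ref{th:main}. Under $\char(F) = \char(k) > n$, that theorem supplies an isomorphism of graded $\Frob_k$-modules
\begin{equation*}
\bigoplus_{i=0}^{\val_F(\Delta_{a,b})} H^*\!\bigl(\geom{\Mloc_{i,a,b}}{k},\Ql(\eta_{k'/k})^{\otimes i}\bigr) \;\cong\; H^*\!\bigl(\geom{\Nloc_{a,b}}{k},\Ql\bigr).
\end{equation*}
Taking the alternating sum of traces of $\Frob_k$ and applying the Grothendieck-Lefschetz trace formula to the projective $k$-schemes $\Mloc_{i,a,b}$ and $\Nloc_{a,b}$, the right-hand side becomes $\#\Nloc_{a,b}(k) = \#\sNloc_{a,b}$. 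On the left, since $\Ql(\eta_{k'/k})$ is pulled back from $\Spec k$ and $\Frob_k$ acts on it by $\eta_{E/F}(\varpi) \in \{\pm 1\}$, the $i$-th summand contributes $\eta_{E/F}(\varpi)^{i}\cdot\#\Mloc_{i,a,b}(k) = \eta_{E/F}(\varpi)^{i}\cdot\#\sMloc_{i,a,b}$. Summing yields exactly the identity \eqref{eq:count}, proving Conjecture \ref{cj:count}.

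Finally I assemble the full Conjecture \ref{cj:OI}. Parts (1) and (3) follow from Conjecture \ref{cj:count} via Propositions \ref{p:GLloc} and \ref{p:Uloc}, which rewrite both orbital integrals in \eqref{eq:OI} as the weighted lattice counts appearing in \eqref{eq:count} (the sign factor matches because $\eta_{E/F}(\varpi)^{v(A)} = (-1)^{v(A)}$ in the nonsplit case, the split case being already handled by Lemma \ref{l:split}). Part (2), the group version, then follows from Part (1) by Proposition \ref{p:gptoalg}, and Part (3) also subsumes the vanishing produced by Lemma \ref{l:cancel} whenever matching fails. The only nontrivial input is Theorem \ref{th:main} itself, so the main obstacle is not located in this corollary but in the global cohomological machinery that establishes the local isomorphism; the extraction carried out here is essentially formal, requiring only the Lefschetz trace formula together with the reductions already in place.
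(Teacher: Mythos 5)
Your proposal is correct and takes essentially the same approach as the paper: the paper's proof is the one-line remark ``taking Frobenius traces of the isomorphism (\ref{eq:main})'', relying implicitly on Lemma \ref{l:cancel} for the case $\eta_{E/F}(\Delta_{a,b})\neq 1$ and on Propositions \ref{p:GLloc}, \ref{p:Uloc}, \ref{p:gptoalg} for the passage from Conjecture \ref{cj:count} to Conjecture \ref{cj:OI}. You have simply spelled out these implicit reductions and the Lefschetz trace bookkeeping, all of which are accurate.
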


The proof of Theorem \ref{th:main} will be completed in \S \ref{s:pf}. We first prove an easy case.

\begin{lemma}\label{l:locisom}
Let $\Omega=k$ if $k'/k$ is split, or $\Omega=k'$ if $k'/k$ is nonsplit, we have an isomorphism of schemes over $\Omega$:
\begin{equation}\label{eq:NM}
\coprod_{i=0}^{\val_F(\Delta_{a,b})}\Mloc_{i,a,b}\otimes_k\Omega\isom\Nloc_{a,b}\otimes_k\Omega,
\end{equation}
\end{lemma}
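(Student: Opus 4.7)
The plan is to geometrize the counting argument of Lemma \ref{l:split}, promoting it to a natural transformation of moduli functors over $\Omega$. By the choice of $\Omega$, the $\calO_F$-algebra $\calO_E\otimes_{\calO_F}\Omega$ splits as a product $\Omega\oplus\Omega$ with $\sigma$ acting by swap: in the split case this already holds over $k$, and in the nonsplit case base change to $\Omega=k'$ trivializes the Galois action. This yields compatible decompositions
\begin{equation*}
R_a(\calO_E)\otimes_k\Omega\cong R_a^\Omega\oplus R_a^\Omega,\qquad R_a^\vee(\calO_E)\otimes_k\Omega\cong (R_a^\Omega)^\vee\oplus(R_a^\Omega)^\vee,
\end{equation*}
where $R_a^\Omega:=R_a\otimes_k\Omega$, with $\sigma_R$ acting by swap, the inclusion of the first into the second induced componentwise by $\gamma_{a,b}$, and the transported Hermitian pairing given by $((x_1,x_2),(y_1,y_2))_R=b'(x_1y_2)\oplus b'(x_2y_1)$.

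Given this, I would define the map \eqref{eq:NM} at the functor level as follows: for $S$ over $\Omega$ and an $S$-point $\Lambda$ of $\Mloc_{i,a,b}\otimes_k\Omega$ (so $R_a\boxtimes_k\calO_S\subset\Lambda\subset R_a^\vee\boxtimes_k\calO_S$ with quotient locally free of rank $i$), send it to the $R_a(\calO_E)\boxtimes_k\calO_S$-submodule $\Lambda\oplus\gamma_{a,b}^{-1}(\Lambda^\vee)$ of $R_a^\vee(\calO_E)\boxtimes_k\calO_S$. Here $\Lambda^\vee$ is the $\calO_S$-linear dual of $\Lambda$, naturally sitting between $R_a\boxtimes_k\calO_S$ and $R_a^\vee\boxtimes_k\calO_S$ with locally free quotient of rank $\val_F(\Delta_{a,b})-i$. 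The explicit pairing formula shows that $\Lambda\oplus\gamma_{a,b}^{-1}(\Lambda^\vee)$ is self-dual, so this assignment lands in $\Nloc_{a,b}\otimes_k\Omega$.

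To produce the inverse, note that any self-dual lattice $\Lambda'\subset R_a^\vee(\calO_E)\boxtimes_k\calO_S$ decomposes as $\Lambda_1\oplus\Lambda_2$ under the splitting, and self-duality together with the explicit pairing forces $\Lambda_2=\gamma_{a,b}^{-1}(\Lambda_1^\vee)$; hence $\Lambda'\mapsto\Lambda_1$ is a well-defined inverse, landing in the component of the disjoint union indexed by the locally constant rank of $R_a^\vee\boxtimes_k\calO_S/\Lambda_1$. The main obstacle is purely bookkeeping: one must verify the compatibility of $\gamma_{a,b}$, $\sigma_R$, and the Hermitian pairing $(\cdot,\cdot)_R$ under the chosen splitting of $\calO_E\otimes\Omega$, and ensure $S$-flatness of the second summand (which follows from $S$-flatness of $R_a^\vee\boxtimes_k\calO_S/\Lambda$). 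There is no serious geometric difficulty here, since the split structure pins everything down.
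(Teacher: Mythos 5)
Your proof is correct and takes essentially the same approach as the paper, which base changes to $\Omega$ to reduce to the split case and then invokes the bijection $\Lambda\mapsto\Lambda\oplus\gamma_{a,b}^{-1}(\Lambda^\vee)$ from the proof of Lemma~\ref{l:split}. You spell out the functor-of-points details more carefully than the paper does (which is genuinely needed to get an isomorphism of schemes, not just a bijection on $k$-points), and apart from a minor slip in stating the splitting of $\calO_E$ (it is $\calO_E\otimes_k\Omega\cong(\calO_F\otimes_k\Omega)^{\oplus 2}$, not $\Omega\oplus\Omega$) the argument matches the intended one.
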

\begin{proof}
After base change to $\Omega$, we may assume that $E/F$ is split. Then the argument is the same as the proof of Lemma \ref{l:split}, once we fix an identification $R_a(E)\cong R_a(F)\oplus R_a(F)$.
\end{proof}

\begin{cor}
Theorem \ref{th:main} holds if $E/F$ is split.
\end{cor}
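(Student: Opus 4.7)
The plan is to combine Lemma \ref{l:locisom} with the triviality of the twisting local system in the split case, obtaining the desired isomorphism essentially for free.

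First, I observe that when $E/F$ is split, the residue extension $k'/k$ is split as well, so by the definition recalled in Notation \ref{n:loc} the rank one local system $\Ql(\eta_{k'/k})$ on $\Spec k$ is canonically trivial. Consequently every tensor power $\Ql(\eta_{k'/k})^{\otimes i}$ is canonically isomorphic to $\Ql$ as a $\Frob_k$-module, and the left-hand side of \eqref{eq:main} simplifies to
\begin{equation*}
\bigoplus_{i=0}^{\val_F(\Delta_{a,b})}H^*\bigl(\geom{\Mloc_{i,a,b}}{k},\Ql\bigr).
\end{equation*}

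Next, since $E/F$ is split we may take $\Omega=k$ in Lemma \ref{l:locisom}, obtaining an isomorphism of $k$-schemes
\begin{equation*}
\coprod_{i=0}^{\val_F(\Delta_{a,b})}\Mloc_{i,a,b}\isom\Nloc_{a,b}.
\end{equation*}
This isomorphism is defined over $k$, hence is $\Frob_k$-equivariant after base change to $\overline{k}$. Taking $\ell$-adic cohomology and using the fact that cohomology commutes with disjoint unions, we get a $\Frob_k$-equivariant isomorphism
\begin{equation*}
\bigoplus_{i=0}^{\val_F(\Delta_{a,b})}H^*\bigl(\geom{\Mloc_{i,a,b}}{k},\Ql\bigr)\cong H^*\bigl(\geom{\Nloc_{a,b}}{k},\Ql\bigr),
\end{equation*}
which is exactly \eqref{eq:main} in the split case.

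There is no real obstacle here: the corollary is essentially a compatibility check between Lemma \ref{l:locisom} and the trivialization of $\Ql(\eta_{k'/k})$, together with the standard fact that étale cohomology converts disjoint unions into direct sums. The only thing to be mildly careful about is that the splitting hypothesis forces both the scheme-theoretic identification and the triviality of the coefficient system to hold over $k$ itself (rather than after a further base change), so the graded $\Frob_k$-module structure is preserved throughout.
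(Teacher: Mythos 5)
Your proof is correct and follows the same route the paper intends: the corollary is placed immediately after Lemma \ref{l:locisom} precisely because it follows by taking $\Omega=k$ there, passing to cohomology, and noting that $\Ql(\eta_{k'/k})$ is trivial when $k'/k$ is split. Your observation that the scheme isomorphism holds over $k$ itself (so $\Frob_k$-equivariance is automatic) is exactly the point worth making.
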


\section{Global formulation---the moduli spaces}\label{s:mod}
Let $k=\FF_q$ be a finite field with $\char(k)>\max\{n,2\}$. Let $X$ be a smooth, projective and geometrically connected curve over $k$ of genus $g$. Let $\pi:X'\to X$ be an \'etale double cover such that $X'/k$ is also geometrically connected. Let $\sigma$ denote the nontrivial involution of $X'$ over $X$. We have a canonical decomposition:
\begin{equation*}
\pi_*\calO_{X'}=\calO_X\oplus\calL
\end{equation*}
into $\pm1$-eigenspaces of $\sigma$. Here $\calL$ is a line bundle on $X$ such that $\calL^{\otimes2}\cong\calO_X$.

Let $D$ and $D_0$ be effective divisors on $X$. Assume $\deg(D)\geq2g$.

\subsection{The moduli spaces associated to $\frs_n$}\label{ss:mods}
Consider the functor $\underline{\calM}:\textup{Sch}/k\to\textup{Grpd}$:
\begin{eqnarray*}
S\mapsto\left\{
\begin{array}{l|l}
&\calE\textup{ is a vector bundle of rank }n\textup{ over }X\times S,\\
(\calE,\phi,\lambda,\mu)&\phi:\calE\to\calE\otimes_{\calO_X}\calL(D),\\
&\calO_{X\times S}(-D_0)\xrightarrow{\lambda}\calE\xrightarrow{\mu}\calO_{X\times S}(D_0)
\end{array}\right\}.
\end{eqnarray*}
Here, the twisting by $(D)$ or $(D_0)$ means tensoring with the pull-back of the line bundles $\calO_X(D)$ or $\calO_X(D_0)$ to $X\times S$. For each integer $i$, we define the subfunctor $\underline{\calM}_i$ of $\underline{\calM}$ by taking only those vector bundles $\calE$ such that
\begin{equation*}
\chi(X\otimes_kk(s),\calE\otimes_kk(s))=i-n(g-1)
\end{equation*}
for any geometric point $s$ of $S$. It is clear that $\underline{\calM}_i$ is represented by an algebraic stack $\calM_i$ over $k$ locally of finite type; $\underline{\calM}$ is represented by $\calM=\coprod_i\calM_i$.

Let $\MHit_i$ be the {\em Hitchin moduli stack} for $\GL_n$ (with the choice of the line bundle $\calL(D)$ on $X$) which classifies only the pairs $(\calE,\phi)$ as above. For more details about this Hitchin stack, we refer the readers to \cite[\S 4.2, \S 4.7]{NFL}.

Let
\begin{eqnarray*}
\calA&:=&\bigoplus_{i=1}^{n}H^0(X,\calL(D)^{\otimes i});\\
\calB&:=&\bigoplus_{i=0}^{n-1}H^0(X,\calO_X(2D_0)\otimes\calL(D)^{\otimes i}).
\end{eqnarray*}
viewed as affine spaces over $k$. We have a natural morphism:
\begin{equation*}
f_i:\calM_i\to\calA\times\calB
\end{equation*}
which, on the level of $S$-points sends $(\calE,\phi,\alpha_X,\beta_X)$ to $a=(a_1,\cdots,a_n)\in\calA$ and $b=(b_0,\cdots,b_{n-1})\in\calB$ where
\begin{equation*}
a_i=\Tr(\bigwedge^i\phi)\in H^0(X\times S,\calL(D)^{\otimes i})\\
\end{equation*}
and $b_i\in H^0(X\times S,\calO_{X\times S}(2D_0)\otimes\calL(D)^{\otimes i})=\Hom_{X\times S}(\calO_{X\times S}(-D_0),\calO_{X\times S}(D_0)\otimes\calL(D)^{\otimes i})$ is represented by the following homomorphism
\begin{equation*}
\calO_{X\times S}(-D_0)\xrightarrow{\lambda}\calE\xrightarrow{\phi^i}\calE\otimes\calL(D)^{\otimes i}\xrightarrow{\mu}\calO_{X\times S}(D_0)\otimes\calL(D)^{\otimes i}
\end{equation*}

\subsection{The spectral curves}
Following \cite[\S 2.5]{LN}, we define the universal spectral curve $p:Y\to\calA\times X$ as follows. For each $S$-point $a=(a_1,\cdots,a_n)\in\calA(S)$, define the following scheme, affine over $X'\times S$:
\begin{equation*}
Y_a':=\underline{\Spec}_{X'\times S}\left(\bigoplus_{i=0}^{n-1}\calO_{X'\times S}(-iD)t^i\right)
\end{equation*}
where the ring structure on the RHS is defined by the relation
\begin{equation*}
t^n-a_1t^{n-1}+a_2t^{n-2}-\cdots+(-1)^na_n=0.
\end{equation*}
Let $p_a':Y_a'\to X'\times S$ be the natural projection. This is a finite flat morphism of degree $n$. The scheme $Y_a'$ over $X'\times S$ naturally embeds into the total space $\Tot_{X'\times S}(\calO(D))$ of the line bundle $\calO_{X'\times S}(D)$ over $X'\times S$. The free involution $\sigma$ on $X'$ extends to a free involution on $Y_a'$ by requiring $\sigma(t)=-t$. The quotient of $Y_a'$ by $\sigma$ is the scheme
\begin{equation*}
Y_a:=\underline{\Spec}_{X\times S}\calO_{Y_a'}^{\sigma}=\underline{\Spec}_{X\times S}\left(\bigoplus_{i=0}^{n-1}\calL(-D)^{\otimes i}\boxtimes\calO_{S}t^i\right).
\end{equation*}
Let $p_a:Y\to X\times S$ be the natural projection. This is a finite flat morphism of degree $n$. The scheme $Y_a$ naturally embeds into the total space $\Tot_{X\times S}(\calL(D))$ of the line bundle $\calL(D)$ over $X\times S$. The quotient map $\pi_a:Y_a'\to Y_a$ is an \'etale double cover.

Let $\Ah$ (resp. $\Ared$, resp. $\Asm$) be the open subset of $\calA$ consisting of those geometric points $a$ such that $Y_a'$, and hence $Y_a$, are integral (resp. reduced, resp. smooth and irreducible). Let $\Bh=\calB-\{0\}$. Let $\Mh_i$ be the restriction of $\calM_i$ to $\Ah\times\Bh$.

\begin{lemma}\label{l:intcomp}
The codimension of $\Ared-\Ah$ in $\Ared$ has codimension at least $\deg(D)$.
\end{lemma}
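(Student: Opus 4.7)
My plan is to stratify $\Ared \setminus \Ah$---the locus where $Y_a'$ is reduced but reducible---by the combinatorial type of the decomposition of $Y_a'$, then bound the dimension of each stratum via Riemann--Roch. Since reducedness of $Y_a'$ is an open condition on $\calA$ (the non-vanishing of the discriminant of $F_a$), $\Ared$ is open in $\calA$, and codimension in $\Ared$ equals codimension in $\calA$; it suffices to bound the latter.

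Suppose $Y_a' = Y_1' \cup Y_2'$ is a nontrivial decomposition into reduced closed subschemes, finite flat over $X'$ of degrees $d_j$ with $d_1 + d_2 = n$ and $d_j \geq 1$. Each $Y_j' \hookrightarrow \Tot_{X'}(\calO_{X'}(D))$ should be cut out by a monic polynomial $F_j(t) = t^{d_j} + c^{(j)}_1 t^{d_j - 1} + \cdots + c^{(j)}_{d_j}$ with $c^{(j)}_i \in H^0(X', \calO_{X'}(iD))$---concretely, the characteristic polynomial of multiplication by $t$ on the rank-$d_j$ locally free $\calO_{X'}$-module $(p_a')_*\calO_{Y_j'}$. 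Hence $F_a = F_1 \cdot F_2$, and the involution $\tilde{\sigma} = (\sigma, -\mathrm{id})$ on $\Tot_{X'}(\calO_{X'}(D))$, which preserves $Y_a'$, either (i) fixes each $Y_j'$ setwise or (ii) swaps them.

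In case (i), each $Y_j'$ descends to a degree-$d_j$ spectral curve over $X$, so $a$ lies in the image of the multiplication map $\calA_{d_1} \times \calA_{d_2} \to \calA$ with $\calA_d := \bigoplus_{i=1}^d H^0(X, \calL(D)^{\otimes i})$. In case (ii), $n = 2m$ is even with $d_1 = d_2 = m$, and $a$ lies in the image of the map $\calA'_m := \bigoplus_{i=1}^m H^0(X', \calO_{X'}(iD)) \to \calA$ sending $F_1 \mapsto F_1 \cdot F_1^{\tilde\sigma}$. Using $\deg(D) \geq 2g$ together with Riemann--Hurwitz ($g_{X'} = 2g - 1$), the relevant $H^1$'s vanish and Riemann--Roch gives $\dim H^0(X, \calL(D)^{\otimes i}) = i \deg(D) + 1 - g$ and $\dim H^0(X', \calO_{X'}(iD)) = 2i \deg(D) + 2 - 2g$. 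A direct calculation then yields codimension $d_1 d_2 \deg(D) \geq (n-1)\deg(D)$ in case (i) and codimension $m^2 \deg(D) \geq \deg(D)$ in case (ii); both are at least $\deg(D)$ for $n \geq 2$ (the statement is vacuous for $n = 1$).

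The main obstacle I foresee is justifying the global factorization claim: that a reducible reduced spectral curve decomposes into subcurves cut out by monic polynomials whose coefficients lie in $H^0(X', \calO_{X'}(iD))$, not merely in the function field of $X'$. This reduces to identifying $(p_a')_*\calO_{Y_j'}$ as a locally free $\calO_{X'}$-subalgebra of $(p_a')_*\calO_{Y_a'} \cong \bigoplus_{i=0}^{n-1} \calO_{X'}(-iD) \cdot t^i$ carrying the expected filtration by powers of $t$, which I expect to follow from flatness of $Y_j'$ over $X'$ and the explicit structure of the ambient $\Tot_{X'}(\calO_{X'}(D))$.
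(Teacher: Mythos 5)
Your proof is correct, and it is a \emph{self-contained} version of what the paper does by citation. The paper observes that $\Ah = \calA^{\ellp}_{\GL_n} \cap \calA^{\ellp}_{\Ug_n}$ (irreducibility of $Y_a$, plus bijectivity of irreducible components under $Y_a'\to Y_a$), and then invokes Ng\^o's Proposition~6.5.1 in \cite{NFL} twice to bound each of $\codim_{\Ared}(\Ared\setminus\calA^{\ellp}_{\GL_n})$ and $\codim_{\Ared}(\Ared\setminus\calA^{\ellp}_{\Ug_n})$ below by $\deg(D)$. Your case~(i) (the involution preserving the two pieces, i.e.\ $Y_a$ itself reducible) is exactly the complement of $\calA^{\ellp}_{\GL_n}$ and your case~(ii) ($Y_a$ irreducible, $Y_a' = Z\sqcup\sigma(Z)$) is the part of the complement of $\calA^{\ellp}_{\Ug_n}$ missed by case~(i); the dimension count you do via Riemann--Roch on the ``Hitchin bases for Levi subgroups'' $\calA_{d_1}\times\calA_{d_2}$ and $\calA'_m$ is exactly what Ng\^o's general argument amounts to when specialized to $\GL_n$ and $\Ug_n$, and the paper's proof sketch even flags this (``we only need to compute the dimension of the Hitchin bases for the Levi subgroups''). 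So you unpack the black box rather than replace the argument; what this buys is that the lemma no longer depends on the external codimension estimate, at the cost of a page of bookkeeping. Your arithmetic checks out: $\dim\calA - \dim(\calA_{d_1}\times\calA_{d_2}) = d_1d_2\deg(D)$ and $\dim\calA - \dim\calA'_m = m^2\deg(D)$, both $\geq\deg(D)$.

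The factorization worry you flag at the end is not a gap, and the cleanest way to dispel it avoids asking whether each $Y_j'$ is literally cut out by a single equation. You only need $a$ to lie in the image of the multiplication map. Write $\phi_j$ for multiplication by $t$ on the rank-$d_j$ bundle $(p'_a)_*\calO_{Y_j'}$; this is an $\calO_{X'}(D)$-valued Higgs field, so $c^{(j)}_i := \Tr(\bigwedge^i\phi_j)\in H^0(X',\calO_{X'}(iD))$, and $F_j := \chi_{\phi_j}$ is monic of degree $d_j$ with these coefficients. Since $Y_1'\cap Y_2'$ is finite (both being reduced curves finite flat over the smooth curve $X'$), the inclusion $p'_*\calO_{Y_a'}\hookrightarrow p'_*\calO_{Y_1'}\oplus p'_*\calO_{Y_2'}$ is an isomorphism over a dense open of $X'$, whence $F_a = \chi_{\phi_a} = F_1 F_2$ on that open, and therefore globally since the $c_i$ are sections of line bundles on the integral curve $X'$. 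You do not need $Y_j' = V(F_j)$ (which can fail: $V(F_j)$ may be non-reduced); the factorization of $F_a$ is what parametrizes $a$. One small point worth stating explicitly: if $Y_a'$ has more than two irreducible components, you must \emph{choose} the two-piece grouping so that $\tilde\sigma$ either preserves or swaps the pieces; this is always possible because either $Y_a$ is already reducible (group the preimages of a nontrivial decomposition of $Y_a$, giving case~(i)) or $Y_a$ is irreducible, in which case the \'etale double cover $Y_a'\to Y_a$ has exactly two components, necessarily disjoint and swapped by $\sigma$ (case~(ii)).
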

\begin{proof}
In our situation, $\calA$ serves as the Hitchin base for $\GL_n$ and $\Ug_n$ at the same time (the unitary Hitchin stack will be recalled in \S \ref{ss:modU}). The locus $\Ah$ is in fact the intersection of two elliptic loci: $\Ah=\calA^{\ellp}_{\GL_n}\cap\calA^{\ellp}_{\Ug_n}$. Here, $\calA^{\ellp}_{\GL_n}$ is the locus where $Y_a$ is irreducible; $\calA^{\ellp}_{\Ug_n}$ is the locus where the set of irreducible components of $Y_a'$ is in bijection with that of $Y_a$ (cf. \cite[\S 2.8]{LN}. In the $\Ug_n$ case, the elliptic locus $\calA^{\ellp}_{\Ug_n}$ is the same as the anisotropic locus considered in \cite{NFL}, and by Proposition 6.5.1 of {\em loc.cit.}, we have
\begin{equation*}
\codim_{\Ared}(\Ared-\calA^{\ellp}_{\Ug_n})\geq\deg(D).
\end{equation*}
In the $\GL_n$ case, the same argument also works to prove that
\begin{equation*}
\codim_{\Ared}(\Ared-\calA^{\ellp}_{\GL_n})\geq\deg(D).
\end{equation*}
In fact, we only need to compute the dimension of the Hitchin bases for the Levi subgroups $\GL_{n_1}\times\cdots\times\GL_{n_r}$.

Therefore,
\begin{equation*}
\dim(\Ared-\Ah)=\dim\left((\Ared-\calA^{\ellp}_{\Ug_n})\cup(\Ared-\calA^{\ellp}_{\GL_n})\right)\leq\dim\Ared-\deg(D).
\end{equation*}
\end{proof}

The following lemma is a direct calculation.
\begin{lemma}\label{l:arithg}
For a geometric point $a\in\Ah$, the arithmetic genera of the curves $Y_a'$ and $Y_a$ are
\begin{eqnarray*}
g'_Y:=1-\chi(Y_a',\calO_{Y_a'})&=&n(n-1)\deg(D)+(2g-2)n+1;\\
g_Y:=1-\chi(Y_a,\calO_{Y_a})&=&n(n-1)\deg(D)/2+(g-1)n+1.
\end{eqnarray*}
\end{lemma}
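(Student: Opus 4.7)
The plan is a direct Riemann--Roch calculation, using that $p_a : Y_a \to X\times S$ and $p_a' : Y_a' \to X'\times S$ are finite flat of degree $n$ with explicit pushforward structure sheaves built into the definition of $Y_a$ and $Y_a'$. Since we restrict to a geometric point $a\in\Ah$, we may drop $S$ and work with single curves $Y_a/X$ and $Y_a'/X'$.

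First, I would compute $g_Y$. By the definition of $Y_a$ as a relative spectrum,
\[
p_{a,*}\calO_{Y_a} \;=\; \bigoplus_{i=0}^{n-1}\calL^{\otimes i}(-iD),
\]
so $\chi(Y_a,\calO_{Y_a}) = \sum_{i=0}^{n-1}\chi(X,\calL^{\otimes i}(-iD))$. Since $\calL^{\otimes 2}\cong\calO_X$ we have $\deg\calL=0$, hence $\deg(\calL^{\otimes i}(-iD)) = -i\deg(D)$. Riemann--Roch on $X$ gives $\chi(X,\calL^{\otimes i}(-iD)) = -i\deg(D) + 1-g$, and summing over $i=0,\dots,n-1$ produces
\[
\chi(Y_a,\calO_{Y_a}) \;=\; -\tfrac{n(n-1)}{2}\deg(D) + n(1-g),
\]
which rearranges to the claimed formula for $g_Y$.

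Second, the same argument works for $Y_a'$ once I compute the genus of $X'$. Since $\pi:X'\to X$ is étale of degree $2$ and $X'$ is geometrically connected, Riemann--Hurwitz gives $2g(X')-2 = 2(2g-2)$, i.e.\ $g(X')=2g-1$, equivalently $\chi(X',\calO_{X'})=2-2g$. The defining formula for $Y_a'$ yields $p_{a,*}'\calO_{Y_a'} = \bigoplus_{i=0}^{n-1}\calO_{X'}(-iD)$ (where $D$ is pulled back to $X'$, hence has degree $2\deg(D)$). Riemann--Roch on $X'$ gives $\chi(X',\calO_{X'}(-iD)) = -2i\deg(D) + 2-2g$, and summing over $i=0,\dots,n-1$ produces
\[
\chi(Y_a',\calO_{Y_a'}) \;=\; -n(n-1)\deg(D) + n(2-2g),
\]
which gives the asserted value of $g'_Y$.

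There is no real obstacle here: the only inputs beyond the definitions are $\deg\calL=0$ (from $\calL^{\otimes 2}\cong\calO_X$) and the Riemann--Hurwitz computation $g(X')=2g-1$. Integrality of $Y_a$, $Y_a'$ (which holds on $\Ah$) is not needed for the Euler characteristic computation itself, but it ensures that $g_Y$ and $g'_Y$ are honestly the arithmetic genera of the (integral) spectral curves.
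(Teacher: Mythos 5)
Your computation is correct and is exactly the "direct calculation" the paper alludes to without spelling out: push forward the structure sheaves via the defining formulas for $Y_a$ and $Y_a'$, use $\deg\calL = 0$ and Riemann--Roch on $X$ (resp.\ on $X'$ with $g(X')=2g-1$), and sum over $i$. Nothing to add.
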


Recall that for a locally projective flat family of geometrically integral curves $C$ over $S$, we have the compactified Picard stack $\cPic(C/S)=\coprod_i\cPic^i(C/S)$ over $S$ (see \cite{AK}) whose fiber over a geometric point $s\in S$ classifies the groupoid of torsion-free coherent sheaves $\calF$ of generic rank 1 over $C_s$ such that $\chi(C_s,\calF)=i$. Each $\cPic^i(C/S)$ is an algebraic stack of finite type over $S$; it is in fact a $\GG_m$-gerb over the compactified Picard scheme of $\overline{\Pic}^i(C/S)$. The scheme $\overline{\Pic}^i(C/S)$ is proper over $S$ and contains the usual Picard scheme $\Pic^i(C/S)$ as an open substack.

For each $a\in\Ah(S)$ and $\calF\in\cPic(Y_a/S)$, the coherent sheaf $\calE=p_{a,*}\calF$ is a vector bundle of rank $n$ over $X\times S$ which is naturally equipped with a Higgs field $\phi:\calE\to\calE\otimes_{\calO_X}\calL(D)$; conversely, every object $(E,\phi)\in\MHit(S)$ over $a\in\Ah(S)$ comes in this way. Therefore we have a natural isomorphism of stacks (cf. \cite{BNR})
\begin{equation}\label{eq:HitPic}
\cPic(Y/\Ah)\isom\MHit|_{\Ah}.
\end{equation}
Therefore we can view $\Mh_i$ as a stack over $\cPic(Y/\Ah)$.

\begin{lemma}\label{l:modpic}
The stack $\Mh_i$ represents the following functor
\begin{equation*}
S\mapsto\left\{
\begin{array}{l|l}
&a\in\Ah(S),\calF\in\cPic^{i-n(g-1)}(Y_a/S),\\
(a,\calF,\alpha,\beta)&\calO_{Y_a}(-D_0)\xrightarrow{\alpha}\calF\xrightarrow{\beta}\omega_{Y_a/X\times S}(D_0)\textup{ such that }\gamma=\beta\circ\alpha\textup{ is}\\
&\textup{ nonzero along each geometric fiber of }Y_a\to S
\end{array}
\right\}.
\end{equation*}
\end{lemma}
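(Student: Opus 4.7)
The strategy is to combine the spectral (BNR) correspondence \eqref{eq:HitPic} with two classical dualities -- adjunction for the finite pushforward and relative Grothendieck--Serre duality -- to convert the data $(\calE,\phi,\lambda,\mu)$ on $X\times S$ into the data $(\calF,\alpha,\beta)$ on the spectral curve $Y_a$, and conversely.

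First, over $\Ah$ the curve $Y_a$ is integral in every geometric fiber, so \eqref{eq:HitPic} identifies Higgs pairs $(\calE,\phi)$ with characteristic $a$ with torsion-free rank-one coherent sheaves $\calF$ on $Y_a$; explicitly $\calE=p_{a,*}\calF$ and $\phi$ is multiplication by the tautological section $t$ of $p_a^*\calL(D)$ on $Y_a$. Because $p_a$ is finite, $\chi(X\times_Sk(s),\calE_s)=\chi(Y_{a,s},\calF_s)$, which matches the normalization of $\cPic^{i-n(g-1)}(Y_a/S)$ with the numerical condition built into the definition of $\calM_i$.

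Next, I would translate $\lambda$ and $\mu$ individually. By the adjunction $(p_a^*,p_{a,*})$ between quasi-coherent sheaves on $X\times S$ and on $Y_a$, a map $\lambda\colon\calO_{X\times S}(-D_0)\to p_{a,*}\calF$ is the same thing as a map
\begin{equation*}
\alpha\colon p_a^*\calO_{X\times S}(-D_0)=\calO_{Y_a}(-D_0)\longrightarrow\calF.
\end{equation*}
For $\mu$ I would invoke relative Grothendieck--Serre duality for the finite flat morphism $p_a\colon Y_a\to X\times S$; since $p_a$ is finite flat the relative dualizing complex is the sheaf $\omega_{Y_a/X\times S}$ concentrated in degree zero, and one has a canonical isomorphism
\begin{equation*}
\Hom_{X\times S}(p_{a,*}\calF,\calO_{X\times S}(D_0))\isom \Hom_{Y_a}(\calF,\omega_{Y_a/X\times S}(D_0)),
\end{equation*}
which turns $\mu$ into $\beta\colon\calF\to\omega_{Y_a/X\times S}(D_0)$. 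Both bijections are functorial in $S$ and commute with base change (using flatness of $p_a$), so the procedure is well-defined at the level of $S$-families.

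Finally, I would match the nonvanishing condition encoded by $\Bh=\calB-\{0\}$. Setting $\gamma=\beta\circ\alpha\in\Hom_{Y_a}(\calO_{Y_a}(-D_0),\omega_{Y_a/X\times S}(D_0))=H^0(Y_a,\omega_{Y_a/X\times S}(2D_0))$, I would compute its pushforward to $X\times S$ using $p_{a,*}\calO_{Y_a}=\bigoplus_{j=0}^{n-1}\calL(-D)^{\otimes j}\boxtimes\calO_S$ together with the Grothendieck--Serre dual decomposition $p_{a,*}\omega_{Y_a/X\times S}=\bigoplus_{j=0}^{n-1}\calL(D)^{\otimes j}\boxtimes\calO_S$ (using $\calL^{\otimes 2}=\calO_X$). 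Under these identifications the components of $p_{a,*}\gamma$ are precisely the sections $\mu\circ\phi^j\circ\lambda=b_j$ from \S\ref{ss:mods}, so $\gamma$ is nonzero on a geometric fiber over $s\in S$ if and only if $b(s)\neq0$, i.e.\ exactly when the image of $s$ lies in $\Bh$.

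The main obstacle is the bookkeeping in the last step: checking that Grothendieck--Serre duality identifies $p_{a,*}\omega_{Y_a/X\times S}$ with $\bigoplus\calL(D)^{\otimes j}$ in a way compatible with the definition of $b$, and that the components of $p_{a,*}\gamma$ recover the $b_j$ literally and not just up to isomorphism. This is a direct, if slightly laborious, computation with the explicit presentation of $Y_a$ as $\underline{\Spec}_{X\times S}\bigl(\bigoplus\calL(-D)^{\otimes j}t^j\bigr)$ and the trace pairing on $p_{a,*}\calO_{Y_a}$; once this identification is pinned down, the two functors are inverse to each other and the lemma follows.
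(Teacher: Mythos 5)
Your proposal is correct and follows essentially the same route as the paper: the BNR spectral correspondence identifies $(\calE,\phi)$ with $\calF$ on $Y_a$, $\lambda\leftrightarrow\alpha$ is the $(p_a^*,p_{a,*})$ adjunction, $\mu\leftrightarrow\beta$ is the Grothendieck--Serre/dualizing-sheaf adjunction for the finite flat map $p_a$, and the nonvanishing condition is matched by decomposing the pushforward of $\gamma$ along $p_{a,*}\calO_{Y_a}=\bigoplus_{j=0}^{n-1}\calL(-D)^{\otimes j}$ (the paper phrases the last step as a map $p_{a,*}\calO_{Y_a}\to\calO_{X\times S}(2D_0)$ rather than a section of $p_{a,*}\omega_{Y_a/X\times S}(2D_0)$, but these are the same by the trace pairing you already invoke). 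The extra verification you flag as the ``main obstacle''---that the components of $p_{a,*}\gamma$ literally recover the $b_j$---is exactly what the paper asserts and relies on, and it is a direct unwinding of the definitions, so there is no gap.
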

\begin{proof}
For a quadruple $(a,\calF,\alpha,\beta)$ as above, we associate $(\calE=p_{a,*}\calF,\phi)\in\MHit_i$ by the isomorphism (\ref{eq:HitPic}). By adjunction, we have
\begin{eqnarray*}
\Hom_X(\calO_{X\times S}(-D_0),\calE)=\Hom_{Y_a}(\calO_{Y_a}(-D_0),\calF)\\
\Hom_X(\calE,\calO_{X\times S}(D_0))=\Hom_{Y_a}(\calF,\omega_{Y_a/X\times S}(D_0)).
\end{eqnarray*}
Therefore from $(\alpha,\beta)$ we can associate a unique pair of homomorphisms
\begin{equation*}
\calO_{X\times S}(-D_0)\xrightarrow{\lambda}\calE\xrightarrow{\mu}\calO_{X\times S}(D_0).
\end{equation*}
Let $b=(b_0,\cdots,b_{n-1})\in\calB$ be the second collection of invariants of $(\calE,\phi,\lambda,\mu)$. The composition $\gamma=\beta\circ\alpha$ is an element in
\begin{equation*}
\Hom_{Y_a}(\calO_{Y_a}(-D_0),\omega_{Y_a/X\times S}(D_0))=\Hom_{X\times S}(\calO_{Y_a},\calO_{X\times S}(2D_0))
\end{equation*}
which is given by
\begin{equation*}
(b_0,\cdots,b_{n-1}):\calO_{Y_a}=\bigoplus_{i=0}^{n-1}\calL(-D)^{\otimes i}\to\calO_{X\times S}(2D_0).
\end{equation*}
Therefore for any geometric point $s$ of $S$, the condition $b(s)\neq0$ is equivalent to that $\gamma|_{Y_s}\neq0$.
\end{proof}

\begin{remark}\label{rm:gamma}
From the proof of above lemma, we see that for any $(a,b)\in\Ah\times\Bh$, the homomorphism $\gamma:\calO_{Y_a}(-D_0)\to\omega_{Y_a/X}(D_0)$ is independent of the choice of $(\calF,\alpha,\beta)\in\Mh_{i,a,b}$. We denote this $\gamma$ by $\gamma_{a,b}$. Therefore we get a morphism
\begin{eqnarray*}
\coker(\gamma):\Ah\times\Bh&\to&\Quot^{2d}(\omega(D_0)/Y/\Ah)\\
(a,b)&\mapsto& \coker(\gamma_{a,b}).
\end{eqnarray*}
\end{remark}

\begin{remark}\label{rm:defd}
Let
\begin{equation*}
d=n\deg(D_0)-n(g-1)+g_Y-1=n(n-1)\deg(D)/2+n\deg(D_0)
\end{equation*}
By the moduli interpretation given in Lemma \ref{l:modpic}, $\Mh_i$ is non-empty only if 
\begin{equation*}
-d-n(g-1)=\chi(Y_a,\calO_{Y_a}(-D_0))\leq i-n(g-1)\leq\chi(Y_a,\omega_{Y_a/X}(D_0))=d-n(g-1).
\end{equation*}
(here $a\in\Ah$ is any geometric point); i.e., $-d\leq i\leq d$.
\end{remark}

\begin{prop}\label{p:smGL}
For $-d+2g_Y-1\leq i\leq d-2g_Y+1$, $\Mh_i$ is a scheme smooth over $k$ and the morphism $\fh_i:\Mh_i\to\Ah\times\Bh$ is proper.
\end{prop}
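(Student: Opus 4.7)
The plan is to work spectrally, using Lemma \ref{l:modpic} to view $\Mh_i$ as the moduli of quadruples $(a, \calF, \alpha, \beta)$ on the spectral curve $Y_a$. Both smoothness and properness will be controlled by cohomological data on $Y_a$, and the range of $i$ in the statement is tailored precisely to force the relevant $H^1$ groups to vanish. The main ingredient from the Hitchin-theoretic literature that I shall invoke is the standard anisotropic vanishing of the Higgs deformation obstruction on $\Ah$, in the style of Ng\^o.

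The crucial numerical computation is that the bound $-d + 2g_Y - 1 \leq i \leq d - 2g_Y + 1$ is precisely the range in which
\[
\deg_{Y_a}\calF(D_0) \geq 2g_Y - 1 \quad\text{and}\quad \deg_{Y_a}\calF^\vee(D_0) \geq 2g_Y - 1,
\]
as one checks directly from $\deg\calF = i - n(g-1) + g_Y - 1$, $\deg\omega_{Y_a/X} = n(n-1)\deg D$, and Lemma \ref{l:arithg}. Consequently $H^1(Y_a, \calF(D_0)) = H^1(Y_a, \calF^\vee(D_0)) = 0$ for every geometric point $a \in \Ah$ and every torsion-free rank one sheaf $\calF$ on $Y_a$ of the specified Euler characteristic.

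With these vanishings in hand, smoothness of $\Mh_i$ at a point $(\calE, \phi, \lambda, \mu)$ follows from the deformation complex
\[
C^\bullet = \bigl[\End(\calE) \xrightarrow{d^0} \End(\calE)\otimes\calL(D) \oplus \calE(D_0) \oplus \calE^\vee(D_0)\bigr],\qquad d^0(\xi) = ([\xi,\phi],\xi\lambda,-\mu\xi),
\]
placed in degrees $0$ and $1$. The obstruction $\HH^2(X, C^\bullet)$ is a quotient of $H^1$ of the degree-one term; the $\calE(D_0)$ and $\calE^\vee(D_0)$ summands vanish by the spectral computation, while the remaining $\End(\calE)\otimes\calL(D)$ summand is surjected onto by $H^1(X, \End(\calE))$ via $[\cdot,\phi]$, which is the anisotropic vanishing for the $\GL_n$-Hitchin stack over $\Ah$. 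That $\Mh_i$ is a scheme and not a genuine stack is then immediate: any automorphism of the quadruple fixes the nonzero map $\alpha: \calO_{Y_a}(-D_0) \to \calF$ on the integral curve $Y_a$, hence equals the identity.

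For properness of $\fh_i$ I shall apply the valuative criterion. Given a DVR $R$ with fraction field $K$, an $R$-point $(a_R, b_R)$ of $\Ah \times \Bh$, and a $K$-lift $(\calF_K, \alpha_K, \beta_K)$, properness of the relative compactified Picard $\cPic^{i - n(g-1)}(Y/\Ah)$ over $\Ah$ extends $\calF_K$ to $\calF_R$, uniquely up to isomorphism. Cohomology and base change (using the same $H^1$-vanishings) make the spaces of $\alpha$'s and $\beta$'s into free $R$-modules commuting with fibers; the freedom of rescaling within the isomorphism class of $\calF_R$, together with the condition that $b_R$ be nonvanishing on the special fiber (the precise meaning of $b_R \in \Bh(R)$), pins down a unique representative of $\calF_R$ for which both $\alpha_K$ and $\beta_K$ extend integrally, and uniqueness of the extended $R$-point then follows from the triviality of automorphisms. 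The main obstacle I anticipate is this last step: one must verify that the representative of $\calF_R$ for which $\alpha$ extends coincides with the one for which $\beta$ extends, and this compatibility is forced exactly by the hypothesis $b_R \in \Bh(R)$, which rules out the pathology where $\alpha$ or $\beta$ acquires a pole not compensated by any rescaling.
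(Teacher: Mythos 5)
Your argument takes a genuinely different route from the paper's. The paper realizes $\Mh_i$ as the fiber product of two Quot-schemes $\Quot^{d\pm i}(\omega(D_0)/Y/\Ah)$ over the compactified Picard stack $\cPic^{i-n(g-1)}(Y/\Ah)$ along the Abel--Jacobi maps $\AJ_{d-i}$ and $(\AJ_{d+i})^\vee$; smoothness then follows from the Altman--Kleiman smoothness of those maps when $d\pm i\geq 2g_Y-1$ together with smoothness of $\cPic(Y/\Ah)\cong\MHit|_{\Ah}$, and properness follows because the Quot-scheme and Abel--Jacobi maps are proper (passed through the $\GG_m$-quotient $\bMh_i$ and the torsor $\Ah\times\Bh\to\Ah\times\PP\Bh$). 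You instead compute the obstruction directly from the deformation complex of the quadruple and invoke the valuative criterion. The two are cohomologically equivalent: your computation $\deg_{Y_a}\calF(D_0)=d+i$ and $\deg_{Y_a}\calF^\vee(D_0)=d-i$ reproduces the paper's $d\pm i\geq 2g_Y-1$, and the $H^1$-vanishings $H^1(Y_a,\calF(D_0))=H^1(Y_a,\calF^\vee(D_0))=0$ are precisely what drives the Altman--Kleiman smoothness of the two Abel--Jacobi maps. Your approach is more transparent about \emph{why} the numerical range appears; the paper's is more economical in that scheme-theoretic niceties come for free from the Quot-scheme. (A terminological quibble: the surjectivity of $\xi\mapsto[\xi,\phi]$ on $H^1(\End\calE)$ is the smoothness of $\MHit$ for $\deg(D)\geq 2g$, from \cite[Proposition 4.12.1]{NFL}; this has nothing to do with anisotropy or ellipticity of $a$.)

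Two places need more. First, triviality of automorphisms only makes $\Mh_i$ an algebraic space; the Proposition asserts it is a \emph{scheme}. The paper obtains this directly: $\Mh_i$ is a fiber product of Quot-schemes, with the Abel--Jacobi map being schematic. Your argument would need a separate input (for instance embedding $\Mh_i$ into a Quot-scheme, or exhibiting an ample line bundle) to upgrade from algebraic space to scheme. Second, your properness step ``pins down a unique representative of $\calF_R$ for which both $\alpha_K$ and $\beta_K$ extend integrally'' is the crux and is asserted rather than proved. You correctly identify $b_R\in\Bh(R)$ as the decisive hypothesis, but you should supply the argument: normalize the $K^\times$-rescaling so that $\alpha_R$ is integral with nonzero reduction; then writing $\beta_R=\varpi^{-m}\beta'$ with $\beta'$ integral of nonzero reduction, the identity $\gamma_R=\beta_R\alpha_R$ with $\gamma_R$ integral forces $(\beta'\alpha_R)|_0=\varpi^m\gamma_R|_0$. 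If $m>0$ this reduction vanishes, but on the integral special-fiber curve $Y_{a_0}$ any nonzero map of torsion-free rank-one sheaves is injective, so $\beta'|_0\circ\alpha_R|_0$ is injective hence nonzero --- contradiction. Thus $m=0$ and $\beta_R$ is integral, with nonzero reduction exactly because $\gamma_R|_0\neq 0$, i.e.\ because $b_R\in\Bh(R)$.
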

\begin{proof}
We have the following Cartesian diagram
\begin{equation}\label{d:CartM}
\xymatrix{\Mh_i \ar[d]^{r_\alpha} \ar[r]^{r_\beta} & \Quot^{d-i}(\omega(D_0)/Y/\Ah)\ar[d]^{\AJ_{d-i}}\\
\Quot^{d+i}(\omega(D_0)/Y/\Ah)\ar[r]^(.6){(\AJ_{d+i})^\vee} & \cPic^{i-n(g-1)}(Y/\Ah)}
\end{equation}
which, on the level of $S$-points over $a\in\Ah(S)$, are defined as
\begin{equation*}
\xymatrix{(\calF,\alpha,\beta)\ar[r]^{r_\beta}\ar[d]^{r_\alpha} & \coker(\calF\xrightarrow{\beta}\omega_{Y_a/X\times S}(D_0))\ar[d]^{\AJ_{d-i}}\\
\coker(\omega_{Y_a/X\times S}(D_0)\xrightarrow{\alpha^\vee}\calF^\vee)\ar[r]^(.6){(\AJ_{d+i})^\vee} & \calF}
\end{equation*}
The only thing that we need to check is that $\coker(\beta)$ and $\coker(\alpha^\vee)$ are finite {\em flat} $\calO_S$-modules of rank $d-i$ and $d+i$ respectively (when $S$ is locally noetherian). We check this for $\calQ:=\coker(\beta)$. For any geometric point $s\in S$, the map $\gamma_s:\calO_{Y_{a,s}}(-D_0)\xrightarrow{\alpha_s}\calF_s\xrightarrow{\beta_s}\omega_{Y_{a,s}/X_s}(D_0)$ is nonzero, hence generically an isomorphism. Therefore $\beta_s$ is surjective on the generic point of $Y_{a,s}$. Since $\calF_s$ is torsion-free of rank 1, we conclude that $\beta_s$ is injective. Since $\omega_{Y_a/X\times S}(D_0)$ is flat over $\calO_S$, we have
\begin{equation*}
\Tor_1^{\calO_S}(\calQ,k(s))=\ker(\beta_s)=0.
\end{equation*}
This being true for any geometric point $s\in S$, we conclude that $\calQ$ is flat over $\calO_S$. The rank of $\calQ_s$ over $k(s)$ for any geometric point $s\in S$ is
\begin{equation*}
\chi(Y_{a,s},\omega_{Y_{a,s}/X_s}(D_0))-\chi(Y_{a,s},\calF_s)=d-i.
\end{equation*}

Since the Abel-Jacobi map $\AJ_{d-i}$ is schematic and $\Quot^{d+i}(\omega(D_0)/Y/\Ah)$ is a scheme, $\Mh_i$ is also a scheme. For $i$ in the given range, we have $d\pm i\geq 2g_Y-1$, therefore $(\AJ_{d+i})^\vee$ and $\AJ_{d-i}$ are smooth by \cite[Theorem 8.4(v)]{AK}. Moreover, since we assumed $\deg(D)\geq2g$ from the beginning, $\cPic(Y/\Ah)\cong\MHit|_{\Ah}$ is smooth by \cite[Proposition 4.12.1]{NFL}. Therefore, $\Mh_i$ is a smooth scheme over $k$.

We have a $\GG_m\times\GG_m$ actions on $\Mh_i$: $(c_1,c_2)$ acts by changing $(\calF,\alpha,\beta)$ to $(\calF,c_1\alpha,c_2\beta)$. It is easy to see that $(c,c^{-1})$ acts trivially so that the action factors through the multiplication map $\GG_m\times\GG_m\to\GG_m$. This $\GG_m$-action is free and the quotient $\bMh_i$ exists as a scheme. In fact, we can define $\bMh_i$ by a similar Cartesian diagram as (\ref{d:CartM}) (the only difference is that the stack $\cPic^{i-n(g-1)}(Y/\Ah)$ is replaced by the scheme $\overline{\Pic}^{i-n(g-1)}(Y/\Ah)$):
\begin{equation}\label{d:CartbM}
\xymatrix{\bMh_i \ar[d]\ar[r] & \Quot^{d-i}(\omega(D_0)/Y/\Ah)\ar[d]^{\AJ_{d-i}}\\
\Quot^{d+i}(\omega(D_0)/Y/\Ah)\ar[r]^(.6){(\AJ_{d+i})^\vee} & \overline{\Pic}^{i-n(g-1)}(Y/\Ah)}.
\end{equation}
In other words, $\bMh_i$ classifies isomorphism classes of $(\calF,\alpha,\beta)$ up to rescaling $\alpha$ and $\beta$. It is clear that $\bMh_i$ is the quotient of $\Mh_i$ under the above-mentioned $\GG_m$-action so that $\Mh_i$ becomes a $\GG_m$-torsor over $\bMh_i$.

From diagram (\ref{d:CartbM}), we see that $\bMh_i$ is proper over $\Ah$ because the Quot-scheme $\Quot^{d+i}(\omega(D_0)/Y/\Ah)$ is proper over $\Ah$ and the morphism $\AJ_{d-i}$ is proper (with fibers isomorphic to projective spaces). Moreover we have a Cartesian diagram
\begin{equation*}
\xymatrix{\Mh_i\ar[r]\ar[d]^{\fh_i} & \bMh_i\ar[d]^{\overline{\fh_i}}\\
\Ah\times\Bh\ar[r] & \Ah\times\PP\Bh}
\end{equation*}
where the horizontal maps are $\GG_m$ torsors. Since both $\bMh_i$ and $\Ah\times\PP\Bh$ are proper over $\Ah$, the morphism $\overline{\fh_i}$ is proper. Therefore $\fh_i$ is also proper. This completes the proof.
\end{proof}

\subsection{The moduli space associated to $\fru_n$}\label{ss:modU}
Consider the functor $\underline{\calN}:\textup{Sch}/k\to\textup{Grpd}$:
\begin{eqnarray*}
S\mapsto\left\{
\begin{array}{l|l}
&\calE'\textup{ is a vector bundle of rank }n\textup{ over }X'\times S,\\
(\calE',h,\phi',\mu')&h:\calE'\isom\sigma^*(\calE')^\vee\textup{ is a Hermitian form, i.e., }\sigma^*h^\vee=h,\\
&\phi':\calE'\to\calE'(D)\textup{ such that }\sigma^*\phi'^\vee\circ h+h\circ\phi'=0,\\
&\mu':\calE'\to\calO_{X'\times S}(D_0)
\end{array}\right\}.
\end{eqnarray*}
Here $(-)^\vee=\underline{\Hom}_{X}(-,\calO_X)$. It is clear that $\underline{\calN}$ is represented by an algebraic stack $\calN$ locally of finite type.

Recall that we also have the usual Hitchin moduli stack $\NHit$ for $\Ug_n$ classifying only the triples $(\calE',h,\phi')$ as above. For details about this Hitchin stack, we refer the readers to \cite{LN}.

For $(\calE',h,\phi',\mu')\in\calN(S)$, since $\sigma^*\phi'^\vee=-h\circ\phi'\circ h^{-1}$, we have
\begin{equation*}
a_i=\Tr(\bigwedge^i\phi')\in H^0(X'\times S,\calO_{X'\times S}(iD))^{\sigma=(-1)^i}=H^0(X,\calL(D)^{\otimes i}).
\end{equation*}
Let $\lambda'=h^{-1}\circ\sigma^*\mu'^\vee:\calO_{X'\times S}(-D_0)\to\sigma^*\calE'^\vee\isom\calE'$. Consider the homomorphism
\begin{equation*}
b_i':\calO_{X'\times S}(-D_0)\xrightarrow{\lambda'}\calE\xrightarrow{\phi'^i}\calE'(iD)\xrightarrow{\mu'}\calO_{X'\times S}(D_0+iD).
\end{equation*}
We have a canonical isomorphism
\begin{eqnarray*}
\iota:\sigma^*b_i'^\vee&\cong&\sigma^*\lambda'^\vee\circ(\sigma^*\phi'^\vee)^i\circ\sigma^*h^\vee\circ\sigma^*\mu'^\vee\\
&\cong&\mu'\circ h^{-1}\circ(-h\circ\phi'\circ h^{-1})^i\circ h\circ\lambda'\\
&=&(-1)^i\mu'\circ\phi'^i\circ\lambda'=(-1)^ib_i'.
\end{eqnarray*}
such that $\sigma^*\iota^\vee=\iota$. Therefore $b_i'$ comes from a homomorphism
\begin{equation*}
b_i:\calO_{X\times S}(-D_0)\to\calO_{X\times S}(D_0)\otimes\calL(D)^{\otimes i}.
\end{equation*}
In other words, we may view $b_i$ as an element in $H^0(X\times S,\calO_{X\times S}(2D_0)\otimes\calL(D)^{\otimes i})$. The map that sends $(\calE',h,\phi',\lambda')\in\calN(S)$ to $a=(a_1,\cdots,a_n)\in\calA(S)$ and $b=(b_0,\cdots,b_{n-1})\in\calB(S)$ defines a morphism
\begin{equation*}
g:\calN\to\calA\times\calB.
\end{equation*}

Let $\Nh$ be the restriction of $\calN$ to $\Ah\times\Bh$. Similar to Lemma \ref{l:modpic}, we can rewrite $\Nh$ in terms of spectral curves.

\begin{lemma}\label{l:modpicU}
The stack $\Nh$ represents the following functor
\begin{equation*}
S\mapsto\left\{
\begin{array}{l|l}
&a\in\Ah(S),\calF'\in\cPic(Y_a'/S),h:\calF'\isom\sigma^*(\calF'^\vee),\\
(a,\calF',\beta')&\textup{ such that }\sigma^*h^\vee=h,\textup{ and }\calF'\xrightarrow{\beta'}\omega_{Y_a'/X\times S}(D_0)\\
&\textup{ which is nonzero along each geometric fiber of }Y_a'\to S
\end{array}
\right\}.
\end{equation*}
\end{lemma}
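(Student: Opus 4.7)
The plan is to run the argument of Lemma \ref{l:modpic} in its unitary analogue, using the spectral correspondence for the unitary Hitchin stack as developed by Laumon-Ng\^o \cite{LN}.

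First, I would establish the spectral description of $\NHit|_{\Ah}$ equipped with Hermitian structure. By the BNR-type correspondence, for $a \in \Ah(S)$ the pair $(\calE', \phi')$ corresponds to a torsion-free sheaf $\calF' \in \cPic(Y_a'/S)$ via $\calE' = p'_{a,*}\calF'$, with $\phi'$ identified with multiplication by $t \in \calO_{Y_a'}$. Grothendieck-Serre duality together with the equality $\omega_{Y_a'/X'\times S} = \omega_{Y_a'/X\times S}$ (since $X'\to X$ is \'etale) gives $p'_{a,*}\calF'^\vee = \calE'^\vee$, while pullback by $\sigma$ commutes with $p'_{a,*}$. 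The crucial sign computation is that on $\sigma^*\calF'^\vee$ the induced Higgs field acts as multiplication by $\sigma^*t = -t$; so the skew-adjointness condition $\sigma^*\phi'^\vee\circ h + h\circ\phi' = 0$ translates into the $\calO_{Y_a'}$-linearity of $h:\calF'\isom\sigma^*\calF'^\vee$, while $\sigma^*h^\vee = h$ transports to the analogous symmetry on the spectral side.

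Second, I would translate the distinguished covector by adjunction:
\begin{equation*}
\Hom_{X'\times S}(p'_{a,*}\calF', \calO_{X'\times S}(D_0)) = \Hom_{Y_a'}(\calF', \omega_{Y_a'/X\times S}(D_0)),
\end{equation*}
sending $\mu'$ to the desired $\beta'$. Because $\lambda' = h^{-1}\circ\sigma^*\mu'^\vee$ is uniquely recovered from $\mu'$ and $h$, the entire quadruple $(\calE', h, \phi', \mu')$ can be reconstructed from $(a, \calF', h, \beta')$ with no residual data. Third, I would verify the openness condition exactly as in Lemma \ref{l:modpic}: the invariant $b\in\calB$ is encoded in the composition $\gamma'_{a,b} = \beta'\circ(h^{-1}\circ\sigma^*\beta'^\vee)\colon\calO_{Y_a'}(-D_0)\to\omega_{Y_a'/X\times S}(D_0)$, whose expansion in the basis $\{t^i\}$ of $p'_{a,*}\calO_{Y_a'}$ reads off the components $b_i$. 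Since $Y_a'$ is integral for $a\in\Ah$ and $\calF'_s$ is torsion-free of generic rank one, the condition $b(s)\neq 0$ is equivalent to $\beta'|_{Y_{a,s}'}\neq 0$ for every geometric $s\in S$.

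The step I expect to be the main obstacle is the first: one must keep careful track of the interplay between Grothendieck-Serre duality, pullback by $\sigma$, and push-forward along $p_a'$, and verify that the symmetry $\sigma^*h^\vee=h$ on $\calE'$ corresponds exactly to the analogous symmetry on $\calF'$, and that the sign in the skew-Hermitian identity is precisely matched by $\sigma^*t = -t$. Once this dictionary is in place, the remaining steps are a formal repackaging via adjunction, strictly parallel to the $\GL_n$ argument in Lemma \ref{l:modpic}.
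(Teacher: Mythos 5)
Your proposal is correct and takes precisely the route the paper intends: the paper actually gives no proof for this lemma, merely remarking that it is ``similar to Lemma~\ref{l:modpic}'' and deferring the unitary BNR/spectral correspondence to Laumon--Ng\^o \cite{LN}, and your three steps (the $\sigma^*t=-t$ dictionary translating skew-adjointness of $\phi'$ into $\calO_{Y_a'}$-linearity of $h$, the adjunction identifying $\mu'$ with $\beta'$, and the openness of $\beta'\neq 0$ via integrality of $Y_a'$ and rank-one torsion-freeness of $\calF'$) supply exactly the omitted details. The only point worth being slightly more explicit about is that $(\calF'^\vee)^\vee\cong\calF'$ (reflexivity of rank-one torsion-free sheaves on an integral Gorenstein curve) is used implicitly when transporting the symmetry $\sigma^*h^\vee=h$ to the spectral side, but this is standard.
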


For $(\calF',h,\beta')\in\Nh_{a}(S)$, let $\gamma'=\beta'\circ\alpha'$ where $\alpha'=h^{-1}\circ\sigma^*\beta'^\vee:\calO_{Y_a'}(-D_0)\to\calF'$. Then $\gamma'\in\Hom_{Y_a'}(\calO_{Y_a'}(-D_0),\omega_{Y_a'/X\times S}(D_0))$ satisfies $\sigma^*\gamma'^\vee=\gamma'$. Therefore $\gamma'$ comes from $\gamma\in\Hom_{Y_a}(\calO_{Y_a}(-D_0),\omega_{Y_a/X\times S}(D_0))$ via pull-back along the double cover $\pi_a: Y_a'\to Y_a$. It is easy to see that this $\gamma$ is the same as the $\gamma_{a,b}$ defined in Remark \ref{rm:gamma}. 

\begin{prop}\label{p:smU}
Suppose $n(\deg(D_0)-g+1)\geq g_Y$. Then the stack $\Nh$ is a scheme smooth over $k$ and the morphism $\gh:\Nh\to\Ah\times\Bh$ is proper.
\end{prop}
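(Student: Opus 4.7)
The proof parallels that of Proposition \ref{p:smGL}, with the Picard stack of $Y/\Ah$ replaced by the Hermitian Picard stack $\cPic^{Herm}(Y'/\Ah)$ classifying the pairs $(\calF',h)$ of Lemma \ref{l:modpicU}. The key point is that once $h$ is fixed, $\alpha'=h^{-1}\circ\sigma^*\beta'^\vee$ is determined by $\beta'$; the only datum beyond $(\calF',h)$ is thus $\beta'$ itself. By \cite[\S 4]{LN}, the hypothesis $\deg(D)\geq 2g$ implies that $\cPic^{Herm}(Y'/\Ah)$, which is the spectral description of $\NHit|_{\Ah}$, is smooth over $\Ah$.

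To establish the smoothness of $\Nh$, I would set up a Cartesian square
\begin{equation*}
\xymatrix{\Nh \ar[d] \ar[r]^(.3){r_\beta} & \Quot^{e}(\omega_{Y'/X}(D_0)/Y'/\Ah) \ar[d]^{\AJ_e}\\
\cPic^{Herm}(Y'/\Ah) \ar[r] & \cPic(Y'/\Ah)}
\end{equation*}
in which the bottom arrow is the forgetful morphism $(\calF',h)\mapsto\calF'$ and the right arrow is the usual Abel--Jacobi map. As in the proof of Proposition \ref{p:smGL}, $\calQ':=\coker(\beta')$ is shown to be finite flat of fiberwise length
\begin{equation*}
e=\deg(\omega_{Y'/X})/2+2n\deg(D_0)=2g_Y-2+2n(\deg(D_0)-g+1),
\end{equation*}
the first equality using the Hermitian constraint $2\deg(\calF')=\deg(\omega_{Y'/X})$ and the second using Lemma \ref{l:arithg} together with $g_{Y'}=2g_Y-1$ and the \'etale double cover $\pi:X'\to X$. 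A short calculation shows the hypothesis $n(\deg(D_0)-g+1)\geq g_Y$ is equivalent to $e\geq 2g_{Y'}-1$, so \cite[Theorem 8.4(v)]{AK} implies $\AJ_e$ is smooth. Combined with smoothness of $\cPic^{Herm}(Y'/\Ah)$ over $\Ah$, the Cartesian property transfers smoothness to $\Nh$ over $k$, and the rigidification by $\beta'$ shows $\Nh$ is a scheme.

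For properness of $\gh$, I would imitate the $\GG_m$-quotient argument at the end of the proof of Proposition \ref{p:smGL}. A $\GG_m$-action on $\Nh$ rescaling $\beta'$ (and correspondingly $h$) yields a quotient $\bNh$ fitting into a Cartesian diagram analogous to \eqref{d:CartbM}, but with $\overline{\Pic}(Y/\Ah)$ replaced by the compactified Hermitian Picard scheme $\overline{\Pic}^{Herm}(Y'/\Ah)$, proper over $\Ah$ by the compactified Prym theory of \cite{LN}. Since both $\Quot^e$ and $\overline{\Pic}^{Herm}(Y'/\Ah)$ are proper over $\Ah$, so is $\bNh$ over $\Ah\times\PP\Bh$, and the $\GG_m$-torsor argument gives properness of $\gh$.

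The main technical hurdle is checking the Cartesian identification $\Nh\cong\cPic^{Herm}(Y'/\Ah)\times_{\cPic(Y'/\Ah)}\Quot^{e}$ holds on the nose: one must verify that the forgetful morphism $\cPic^{Herm}(Y'/\Ah)\to\cPic(Y'/\Ah)$ is representable and that the Hermitian data $h$ indeed parametrizes the entire fiber over $\calF'$, which is essentially the content of the Prym analysis in \cite{LN}.
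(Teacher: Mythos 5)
Your proof takes essentially the same approach as the paper: the Cartesian square you propose is exactly the paper's diagram, with your $\cPic^{Herm}(Y'/\Ah)$ being what the paper calls $\NHit|_{\Ah}$ (via the spectral identification), your $\Quot^e$ with $e=2d$ being its $\Quot^{2d}$, and your $\cPic(Y'/\Ah)$ being its $\cPic^{-2n(g-1)}(Y'/\Ah)$ (the fixed Euler characteristic coming from $\calF'\cong\sigma^*\calF'^\vee$). The smoothness calculation $e=2d\geq 2g_{Y'}-1$ matches, and invoking smoothness of $\NHit|_{\Ah}$ over $k$ via \cite{LN} is exactly what the paper does.

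One place where you are less careful than the paper is properness. The $\GG_m$-action rescaling $\beta'$ is \emph{not} free: $\mu_2$ acts trivially (since $(\calF',h,-\beta')\cong(\calF',h,\beta')$ via the automorphism $-1$ of $\calF'$), so the quotient $\bNh$ is only a Deligne--Mumford stack, and $\Nh\to\bNh$ is not a $\GG_m$-torsor. Also, the parenthetical ``(and correspondingly $h$)'' is off --- only $\beta'$ is rescaled; $h$ stays put and $\alpha'=h^{-1}\circ\sigma^*\beta'^\vee$ comes along automatically, so $\gamma'$ is rescaled by $c^2$, hence $b$ by $c^2$. This is why the paper introduces the quotient stack $(\PP\Bh)'$ of $\Bh$ by the \emph{square} dilation action, a separated DM stack, and then deduces $\gh$ proper from $\overline{\gh}:\bNh\to\Ah\times(\PP\Bh)'$ being proper and a Cartesian diagram. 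Your appeal to a ``compactified Hermitian Picard scheme'' isn't needed: since $\NHit|_{\Ah}$ is already proper over $\Ah$ and $\bNh\to\NHit|_{\Ah}$ is proper (the fibers being linear-system-type projective spaces), one gets $\bNh$ proper over $\Ah$, and the $(\PP\Bh)'$ device finishes it. The rest of your argument is correct and matches the paper.
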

\begin{proof}
By the moduli interpretation given in Lemma \ref{l:modpicU}, we have a Cartesian diagram
\begin{equation}\label{d:CartN}
\xymatrix{\Nh\ar[r]^(.4){r_\beta'}\ar[d]^{r}&\Quot^{2d}(\omega(D_0)/Y'/\Ah)\ar[d]^{\AJ_{2d}}\\
\NHit|_{\Ah}\ar[r]^{u}&\cPic^{-2n(g-1)}(Y'/\Ah)}
\end{equation}
which, on the level of $S$-points over $a\in\Ah(S)$, are defined as
\begin{equation*}
\xymatrix{(\calF',h,\beta')\ar[r]^(.4){r_\beta'}\ar[d]^{r}&\coker(\calF'\xrightarrow{\beta'}\omega_{Y_a'/X\times S}(D_0))\ar[d]^{\AJ_{2d}}\\
(\calF',h)\ar[r]^{u} &\calF'}
\end{equation*}
Here $\calF'$ has Euler characteristic $-2n(g-1)$ along every geometric fiber of $Y_a'\to S$ because $\calF'\cong\sigma^*\calF'^\vee$.

Since $u$ is schematic and $\Quot^{2d}(\omega(D_0)/Y'/\Ah)$ is a scheme, we see that $\Nh$ is a scheme. By \cite[Proposition 2.5.2, Proposition 2.8.4]{LN}, $\NHit|_{\Ah}$ is smooth over $k$ and proper over $\Ah$. Moreover, since $d=n\deg(D_0-g+1)+g_Y-1\geq2g_Y-1=g_Y'$, $2d\geq2g_Y'$, therefore $\AJ_{2d}$ is smooth by \cite[Theorem 8.4(v)]{AK}. Therefore $\Nh$ is a smooth scheme over $k$.

We have a $\GG_m$-action on $\Nh$ by rescaling $\beta'$. Unlike the case of $\Mh_i$, this action is not free: the subgroup $\mu_2\subset\GG_m$ acts trivially. The quotient of $\Nh$ by this $\GG_m$-action is a Deligne-Mumford stack $\bNh$ proper over $\NHit|_{\Ah}$, hence proper over $\Ah$. Let $(\PP\Bh)'$ be the quotient of $\Bh$ by the square action of the dilation by $\GG_m$. This is a separated Deligne-Mumford stack. Therefore $\bNh$ is proper over $\Ah\times(\PP\Bh)'$. We have a Cartesian diagram
\begin{equation*}
\xymatrix{\Nh\ar[r]\ar[d]^{\gh} & \bNh\ar[d]^{\overline{\gh}}\\
\Ah\times\Bh\ar[r] & \Ah\times(\PP\Bh)'}
\end{equation*}
This implies that $\gh:\Nh\to\Ah\times\Bh$ is also proper.
\end{proof}

\subsection{The product formulae}\label{ss:pd}
We want to express the fibers of $\fh_i$ in terms of local moduli spaces similar to $\Mloc_{i,a,b}$ defined in \S \ref{ss:OIGL}. To simplify the notations, we only consider fibers $\Mh_{i,a,b}$ where $(a,b)\in\Ah(k)\times\Bh(k)$, but the argument is valid for $(a,b)\in\Ah(\Omega)\times\Bh(\Omega)$ for any field $\Omega\supset k$.

Let $|X|$ be the set of closed points of $X$. For any $x\in|X|$, let $\calO_{X,x}$ be the {\em completed} local ring of $X$ at $x$ with fraction field $F_x$ and residue field $k(x)$. Then $\calO_{X,x}$ is naturally a $k(x)$-algebra. Let $\calO_{X',x}$ be the completed semilocal ring of $X'$ along $\pi^{-1}(x)$ (recall Notations \ref{n:semilocal}) and let $E_x$ be its ring of fractions. Then $E_x$ is an unramified (split or nonsplit) quadratic extension of $F_x$. 

Fix an $\sigma$-equivariant trivialization of $\calO_{X'}(D)$ along $\Spec\calO_{X',x}$, which allows us to identify $\calL|_{\Spec\calO_{X,x}}$ with $\calO_{E_x}^{-}$. Fix a trivialization of $\calO_X(D_0)$ over $\Spec\calO_{X,x}$. Using these trivializations, we can identify the restriction of $(a,b)$ on $\Spec\calO_{X,x}$ with a collection of invariants $a_x=(a_{1,x},\cdots,a_{n,x})$ and $b_x=(b_{0,x},\cdots,b_{n-1,x})$ such that $a_{i,x},b_{i,x}\in(\calO_{E_x})^{\sigma=(-1)^i}$. Using $F_x$, $E_x$ and $(a_x, b_x)$ in place of $F$, $E$ and $(a,b)$ in the discussion of \S \ref{ss:inv} and \ref{ss:OIGL}, we can define the $\calO_{F_x}$ algebra $R_{a_x}$, which is isomorphic to $\calO_{Y_a,x}$ (see Notation \ref{n:semilocal}). The trivializations also identify $R_{a_x}^\vee$ with $\omega_{Y_a/X}(D_0)|_{\Spec\calO_{Y_a,x}}$ and $\gamma_{a_x,b_x}:R_{a_x}\to R_{a_x}^\vee$ (defined in (\ref{eq:gamma})) with $\gamma_{a,b}|_{\Spec\calO_{Y,x}}$ (defined in Remark \ref{rm:gamma}). As in (\ref{eq:HermR}), $R_{a_x}(E_x)$ has a natural Hermitian form under which $R_{a_x}(\calO_{E_x})$ and $R_{a_x}^\vee(\calO_{E_x})$ are dual to each other.

With these data, we can define the local moduli spaces of lattice $\Mloc_{i_x,a_x,b_x}$ ($i_x\in\ZZ$) and $\Nloc_{a_x,b_x}$ as in \S \ref{ss:OIGL} and \ref{ss:OIU}, which are projective schemes over $k(x)$. To emphasize its dependence on the point $x$, we denote them by $\calM^x_{i_x,a_x,b_x}$ and $\calN^x_{a_x,b_x}$.

\begin{prop}\label{p:pdGL}
For $(a,b)\in\Ah(k)\times\Bh(k)$, there is an isomorphism of schemes over $k$:
\begin{equation}\label{eq:pdGL}
\calM_{i,a,b}\cong\coprod_{\sum[k(x):k]i_x=d-i}\left(\prod_{x\in|X|}\Res_{k(x)/k}\calM^{x}_{i_x,a_x,b_x}\right).
\end{equation}
Here $|X|$ is the set of closed points of $X$, $\Res_{k(x)/k}$ means restriction of scalars. The disjoint union is taken over the set of all assignments $x\in|X|\mapsto i_x\in\ZZ$ such that $\sum_{x\in|X|}[k(x):k]i_x=d-i$. The product is the fiber product of schemes over $k$.
\end{prop}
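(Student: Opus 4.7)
\emph{Proof proposal.} The plan is to use the moduli description in Lemma \ref{l:modpic} to reinterpret $\calM_{i,a,b}$ as parametrizing certain subsheaves of $\omega_{Y_a/X}(D_0)$, and then to show that such subsheaves decompose uniquely into local data at each closed point of $X$.

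First, for a test scheme $S$ and $(\calF,\alpha,\beta)\in\calM_{i,a,b}(S)$ with $(a,b)\in\Ah(k)\times\Bh(k)$, since $Y_a$ is integral, $\calF$ is torsion-free of generic rank one on $Y_a\times S\to S$, and the composition $\gamma_{a,b}=\beta\circ\alpha$ is generically an isomorphism, both $\alpha$ and $\beta$ must be injective as maps of $\calO_{Y_a\times S}$-modules. Hence the triple $(\calF,\alpha,\beta)$ is recovered from the inclusion $\calF\hookrightarrow\omega_{Y_a\times S/X\times S}(D_0)$ together with the fixed inclusion $\calO_{Y_a\times S}(-D_0)\hookrightarrow\calF$ coming from the pullback of $\gamma_{a,b}$ (Remark \ref{rm:gamma}). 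This rewrites $\calM_{i,a,b}(S)$ as the set of coherent subsheaves $\calO_{Y_a\times S}(-D_0)\subset\calF\subset\omega_{Y_a\times S/X\times S}(D_0)$ with quotient $\omega/\calF$ locally free of rank $d-i$ over $S$.

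Second, the fixed cokernel $\calQ_{a,b}:=\omega_{Y_a/X}(D_0)/\calO_{Y_a}(-D_0)$ is a torsion sheaf supported on the preimage of a finite subset $T\subset|X|$. Such a subsheaf $\calF$ is then determined by its completions at each $x\in|X|$: for $x\notin T$ the completion is forced, while for $x\in T$ it amounts to an $\calO_{Y_a,x}\boxtimes_k\calO_S$-submodule $\Lambda_x$ between $\calO_{Y_a,x}(-D_0)\boxtimes_k\calO_S$ and $\omega_{Y_a/X}(D_0)|_{\Spec\calO_{Y_a,x}}\boxtimes_k\calO_S$, with flat quotient. Under the trivializations of $\calL$ and $\calO_X(D_0)$ near $x$ fixed prior to the proposition, $\Lambda_x$ is precisely an $S$-valued point of $\Res_{k(x)/k}\calM^x_{i_x,a_x,b_x}$, where $i_x$ is the $\calO_{F_x}$-length of the local quotient $R_{a_x}^\vee/\Lambda_x$. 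Restriction of scalars appears because $\calO_{Y_a,x}$ and $R_{a_x}$ are naturally $k(x)$-algebras rather than $k$-algebras.

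Finally, the numerical constraint $\sum_{x\in|X|}[k(x):k]\,i_x=d-i$ is obtained by computing the $\calO_S$-rank of $\omega/\calF$ along any geometric fiber in two ways: globally it equals $\chi(\omega_{Y_a/X}(D_0))-\chi(\calF)=d-i$ (cf.\ Remark \ref{rm:defd}), while locally it decomposes as $\sum_x[k(x):k]\,i_x$ since $\omega/\calF$ is supported on $T$ and has $\calO_{F_x}$-length $i_x$ at each $x$. Assembling the local factors and summing over admissible assignments $(i_x)$ then gives the claimed disjoint union of products. The main technical point to be careful about is promoting this bijection from $k$-points to a scheme-theoretic isomorphism of moduli functors: it rests on the fact that $\omega/\calF$ is finite over $S$ with support lying over $T$, so $\calF$ is reconstructed globally from its formal completions at $T$ by Zariski gluing, but one must check that the resulting morphism of functors is compatible with flatness and base change — essentially a formal-gluing argument for coherent sheaves on a family of curves.
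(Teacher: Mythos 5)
Your proposal follows essentially the same approach as the paper, and the argument is correct in substance. You identify $\calM_{i,a,b}$ (via Lemma \ref{l:modpic} and injectivity of $\alpha,\beta$) with subsheaves $\calO_{Y_a}(-D_0)\subset\calF\subset\omega_{Y_a/X}(D_0)$ and decompose by support over $|X|$, which is precisely what the paper does after passing to the quotient side: it observes that $(\calF,\alpha,\beta)\mapsto\coker\beta$ realizes $\calM_{i,a,b}$ as the Quot scheme $\Quot^{d-i}(\calQ/Y_a/k)$ with $\calQ=\coker(\gamma_{a,b})$.

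The one place where the paper's formulation is cleaner is exactly the step you flag at the end as a "technical point." Working with the Quot scheme of the torsion sheaf $\calQ$ makes the decomposition automatic: since $\calQ$ is supported at finitely many closed points of $Y_a$, it splits canonically as $\calQ=\bigoplus_x\calQ_x$ (a direct sum over the points in the support), and for any $k$-algebra $\calO_S$, a flat quotient of $\calQ\otimes_k\calO_S=\bigoplus_x(\calQ_x\otimes_k\calO_S)$ is a direct sum of flat quotients of the $\calQ_x\otimes_k\calO_S$ — simply because the summands have disjoint support. So the decomposition of the functor into local factors is a direct-sum decomposition, not a formal gluing of completions; there is no flatness/base-change compatibility left to check. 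Equivalently, in your submodule language, $\calF/\calO_{Y_a}(-D_0)\subset\calQ$ splits as a direct sum over the support of $\calQ$, and $\calF$ is recovered as the preimage of this submodule. Your argument is sound; I would just replace the appeal to formal gluing by this direct-sum observation, which is what the paper implicitly uses.

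One more small point worth making explicit: the right-hand side of \eqref{eq:pdGL} is a priori an infinite disjoint union of infinite products, so one should note (as the paper does) that for $x$ outside the support $T$ of $\calQ$, $\calM^x_{i_x,a_x,b_x}$ is either a single point (if $i_x=0$) or empty, and for $x\in T$ only finitely many $i_x$ (those in $[0,\val_x(\Delta_{a_x,b_x})]$) give a nonempty local moduli space. This turns the right-hand side into a genuine finite disjoint union of finite products and is needed for the scheme-theoretic statement to even make sense.
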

\begin{proof}
First we remark that the RHS in the isomorphism (\ref{eq:pdGL}) is in fact finite. Note that for each $x\in|X|$, there are only finitely many $i_x$ such that $\calM^{x}_{i_x,a_x,b_x}$ is non-empty (more precisely, $0\leq i\leq \val_x(\Delta_{a_x,b_x})$). For fixed $(a,b)\in\Ah(k)\times\Bh(k)$, the map $\gamma_{a,b}:\calO_{Y_a}(-D_0)\to\omega_{Y_a/X}(D_0)$ is an isomorphism away from a finite subset $Z\subset|X|$, and when $\gamma_x$ is an isomorphism, $\calM^{x}_{i_x,a_x,b_x}\cong\Spec k(x)$. Therefore we can rewrite the RHS of (\ref{eq:pdGL}) as a finite disjoint union of
\begin{equation*}
\prod_{x\in Z}\Res_{k(x)/k}\calM^{x}_{i_x,a_x,b_x},
\end{equation*}
which makes sense.

By the Cartesian diagram (\ref{d:CartM}), the assignment $(\calF,\alpha,\beta)\mapsto\coker(\beta)$ defines an isomorphism of schemes
\begin{equation*}
\calM_{i,a,b}\cong\Quot^{d-i}(\calQ/Y_a/k).
\end{equation*}
where $\calQ=\coker(\gamma_{a,b})$. Since $\gamma_{a,b}$ is an isomorphism on $X-Z$, $\calQ$ is supported on $Z\times S$. Therefore we get a canonical decomposition
\begin{equation*}
\calQ=\bigoplus_{x\in Z}\calQ_x,
\end{equation*}
with each $\calQ_x$ supported at $x$. We get a corresponding decomposition of the Quot-scheme
\begin{equation*}
\Quot^{d-i}(\calQ/Y_a/k)\cong\coprod_{\sum[k(x):k]i_x=d-i}\left(\prod_{x\in Z}\Res_{k(x)/k}\Quot^{i_x}(\calQ_x/R_{a_x}/k(x))\right).
\end{equation*}
To prove the isomorphism (\ref{eq:pdGL}), it remains to identify $\Quot^{i_x}(\calQ_x/R_{a_x}/k(x))$ with $\calM^{x}_{i_x,a_x,b_x}$, but this is obvious from definition.
\end{proof}

Similarly, we have

\begin{prop}\label{p:pdU}
For $(a,b)\in\Ah(k)\times\Bh(k)$, we have the following isomorphism of schemes over $k$:
\begin{equation}\label{eq:pdU}
\calN_{a,b}\cong\prod_{x\in|X|}\Res_{k(x)/k}\calN^{x}_{a_x,b_x}.
\end{equation}
\end{prop}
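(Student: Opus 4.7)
The plan is to proceed in close parallel with the proof of Proposition \ref{p:pdGL}, the main extra step being the translation of the Hermitian structure into the self-duality condition for lattices.

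First I would use Lemma \ref{l:modpicU} together with the Cartesian diagram (\ref{d:CartN}) to reinterpret an $S$-point of $\calN_{a,b}$ as a torsion-free rank-one sheaf $\calF'$ on $Y_a' \times S$ equipped with a Hermitian isomorphism $h : \calF' \isom \sigma^* \calF'^\vee$ satisfying $\sigma^* h^\vee = h$ and a map $\beta' : \calF' \to \omega_{Y_a'/X \times S}(D_0)$, with $\beta' \circ \alpha' = \gamma'_{a,b}$ for $\alpha' = h^{-1} \circ \sigma^*\beta'^\vee$ and $\gamma'_{a,b}$ the pullback of $\gamma_{a,b}$ along $\pi_a$. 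Since $\gamma'_{a,b}$ is nonzero on every irreducible component of $Y_a'$, $\beta'$ is injective (by the same flatness argument as in Proposition \ref{p:smGL}) and thus realizes $\calF'$ as a subsheaf of $\omega_{Y_a'/X\times S}(D_0)$ containing $\calO_{Y_a'\times S}(-D_0)$. Under this identification, the pair $(h, \beta')$ is equivalent to the single condition that the lattice $\calF'$ be self-dual with respect to the Hermitian pairing on $\omega_{Y_a'/X}(D_0)$ determined by $\gamma'_{a,b}$ and Grothendieck-Serre duality.

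Next I would decompose by support. Because $(a,b) \in \Ah \times \Bh$, the map $\gamma_{a,b}$ is an isomorphism away from a finite subset $Z \subset |X|$, so $\gamma'_{a,b}$ is an isomorphism away from the preimage of $Z$ in $Y_a'$. The cokernel $\calQ' = \coker(\gamma'_{a,b})$ therefore decomposes canonically as $\bigoplus_{x \in Z} \calQ'_x$, where each $\calQ'_x$ is supported over $x \in |X|$. The involution $\sigma$ preserves this decomposition (it permutes the support of each $\calQ'_x$ within itself), so the self-duality condition on $\calF'$ factors through each $x$ independently. Using the $\sigma$-equivariant trivializations of $\calO_{X'}(D)$ and $\calO_X(D_0)$ near $x$ fixed at the start of \S \ref{ss:pd}, the completed semilocal ring $\calO_{Y_a', x}$ identifies with $R_{a_x}(\calO_{E_x})$, the corresponding restriction of $\omega_{Y_a'/X}(D_0)$ identifies with $R_{a_x}^\vee(\calO_{E_x})$, and the Grothendieck-Serre pairing matches the Hermitian pairing (\ref{eq:HermR}) of \S \ref{ss:OIU}. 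This identifies the $x$-local factor with $\Res_{k(x)/k}\calN^{x}_{a_x,b_x}$. For $x \notin Z$ one has $R_{a_x}(\calO_{E_x}) = R_{a_x}^\vee(\calO_{E_x})$, so $\calN^{x}_{a_x,b_x}$ is a point and may be harmlessly included in the product, yielding (\ref{eq:pdU}).

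The main obstacle is the precise matching in the first step: verifying that the intrinsic Hermitian-duality condition on $\calF'$ (encoded by $h$ satisfying $\sigma^* h^\vee = h$) coincides with self-duality under the Hermitian pairing induced by $\gamma'_{a,b}$. This is essentially a compatibility between two avatars of Grothendieck-Serre duality — globally on $Y_a'/X$ and locally on the finite $\calO_{F_x}$-algebra $R_{a_x}(\calO_{E_x})$ — and its verification amounts to tracking how the fixed trivializations carry the relative dualizing sheaf $\omega_{Y_a'/X}(D_0)$ to $R_{a_x}^\vee(\calO_{E_x})$ in a way compatible with the $\sigma$-action.
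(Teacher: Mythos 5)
Your proposal is correct and follows essentially the same route as the paper, which proves this proposition simply by invoking the same argument as Proposition \ref{p:pdGL}. You correctly isolate the one genuinely new step -- converting the Hermitian isomorphism $h$ into a self-duality condition on the lattice $\calF' \subset \omega_{Y_a'/X\times S}(D_0)$ -- and observe that $h$ is in fact uniquely determined by $\beta'$ and the requirement $\beta'\circ\alpha' = \gamma'_{a,b}$ (since $\beta'$ is injective), so that the data reduces to a constrained subsheaf; the subsequent decomposition by support over $Z\subset|X|$ and the identification of the local factors via the fixed trivializations proceed exactly as in the $\GL$ case.
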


\begin{cor}\label{c:globisom}
For any geometric point $(a,b)\in\Ah(\Omega)\times\Bh(\Omega)$, there is an isomorphism of schemes over $\Omega$:
\begin{equation*}
\coprod_{i=-d}^{d}\calM_{i,a,b}\cong\calN_{a,b}.
\end{equation*}
\end{cor}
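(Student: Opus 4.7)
The plan is to combine the product formulae (Propositions \ref{p:pdGL} and \ref{p:pdU}) with Lemma \ref{l:locisom} in its split form, exploiting the fact that over an algebraically closed base every local quadratic extension becomes split.

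The starting observation is that because $\pi:X'\to X$ is \'etale of degree two and $\Omega$ is algebraically closed, for every closed point $x$ of $X_\Omega:=X\otimes_k\Omega$ the preimage $\pi^{-1}(x)$ consists of two $\Omega$-rational points, so each local extension $E_x/F_x$ is split; moreover $k(x)=\Omega$, so the $\Res_{k(x)/k}$ functors in the product formulae become trivial when applied over $\Omega$. Applying Propositions \ref{p:pdGL} and \ref{p:pdU} to the geometric point $(a,b)\in\Ah(\Omega)\times\Bh(\Omega)$ (their validity for geometric bases is the remark at the opening of \S\ref{ss:pd}) yields
\[
\calM_{i,a,b}\;\cong\;\coprod_{\sum_x i_x=d-i}\;\prod_{x\in|X_\Omega|}\calM^x_{i_x,a_x,b_x},\qquad \calN_{a,b}\;\cong\;\prod_{x\in|X_\Omega|}\calN^x_{a_x,b_x}.
\]
Taking the disjoint union of the first expression over $-d\le i\le d$ and using distributivity of the fiber product over disjoint unions rewrites the left-hand side as $\prod_{x}\bigl(\coprod_{i_x=0}^{\val_x(\Delta_{a_x,b_x})}\calM^x_{i_x,a_x,b_x}\bigr)$. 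Lemma \ref{l:locisom} applied locally at each $x$ then identifies the inner coproduct with $\calN^x_{a_x,b_x}$, and the product formula for $\calN_{a,b}$ closes the loop.

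The only point requiring care is the combinatorial matching of index sets: as $i$ ranges over $[-d,d]$, the constraint $\sum_x i_x=d-i$ must sweep out exactly the assignments satisfying $0\le i_x\le\val_x(\Delta_{a_x,b_x})$. This amounts to the identity $\sum_{x\in|X_\Omega|}\val_x(\Delta_{a_x,b_x})=2d$, which is precisely the statement that $\coker(\gamma_{a,b})$ has length $2d$ (Remark \ref{rm:gamma}). Apart from this bookkeeping, no substantive obstacle arises; the corollary is essentially a formal consequence of globalizing Lemma \ref{l:locisom} via the product formulae.
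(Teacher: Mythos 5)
Your proposal is correct and matches the paper's argument exactly: the paper's own proof is the one-line statement that the corollary follows from the two product formulae and Lemma~\ref{l:locisom}, and your write-up is simply a careful expansion of that, correctly noting that every local extension splits over an algebraically closed $\Omega$ and that the index bookkeeping is governed by $\sum_x \val_x(\Delta_{a_x,b_x})=2d$.
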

\begin{proof}
This follows from the two product formulae and Lemma \ref{l:locisom}. 
\end{proof}

\subsection{Smallness}\label{ss:small}

Recall that in \cite{NFL} Ng\^o defines the local and global Serre invariants for points on the Hitchin base. In our case, for a geometric point $a\in\calA(\Omega)$, let $\tilY_a\to Y_a$ be the normalization. Then the local and global Serre invariants are
\begin{eqnarray*}
\delta(a,x)&=&\dim_{\Omega}(\calO_{\tilY_a,x}/\calO_{Y_a,x});\\
\delta(a)&=&\dim_{\Omega}H^0(Y_a,\calO_{\tilY_a}/\calO_{Y_a})=\sum_{x\in X(\Omega)}\delta(a,x).
\end{eqnarray*}

\begin{cor}\label{c:dim}
For any geometric point $(a,b)\in(\Ah\times\Bh)(\Omega)$, we have
\begin{equation}\label{eq:dim}
\dim_{\Omega}\calN_{a,b}=\sup_i\dim_{\Omega}\calM_{i,a,b}\leq\delta(a).
\end{equation} 
\end{cor}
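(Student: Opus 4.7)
The plan is to treat the equality and the inequality separately. The equality $\dim_{\Omega}\calN_{a,b}=\sup_i\dim_{\Omega}\calM_{i,a,b}$ is immediate from Corollary \ref{c:globisom}, since a finite disjoint union has dimension equal to the supremum of the dimensions of its components.

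For the inequality $\dim_{\Omega}\calN_{a,b}\leq\delta(a)$, I base change to an algebraic closure $\overline{\Omega}$ of $\Omega$ (which does not change dimensions). The product formulae of Propositions \ref{p:pdGL} and \ref{p:pdU} then express the fibers as products over the geometric points $\bar{x}\in X(\overline{\Omega})$:
\begin{equation*}
\calN_{a,b}\otimes\overline{\Omega}\cong\prod_{\bar{x}\in X(\overline{\Omega})}\calN^{\bar{x}}_{a_{\bar{x}},b_{\bar{x}}},\qquad \calM_{i,a,b}\otimes\overline{\Omega}\cong\coprod_{(i_{\bar{x}})}\prod_{\bar{x}}\calM^{\bar{x}}_{i_{\bar{x}},a_{\bar{x}},b_{\bar{x}}}.
\end{equation*}
All but finitely many local factors are single reduced points---namely, those $\bar{x}$ at which $\gamma_{a,b}$ is already an isomorphism. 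Summing dimensions, it therefore suffices to establish the local estimate
\begin{equation*}
\dim\calM^{\bar{x}}_{i_{\bar{x}},a_{\bar{x}},b_{\bar{x}}}\leq\delta(a,\bar{x})\qquad\text{and}\qquad\dim\calN^{\bar{x}}_{a_{\bar{x}},b_{\bar{x}}}\leq\delta(a,\bar{x})
\end{equation*}
at each such $\bar{x}$, after which $\sum_{\bar{x}}\delta(a,\bar{x})=\delta(a)$ yields the global bound.

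The local estimates come from interpreting $\calM^{\bar{x}}_{i_{\bar{x}},a_{\bar{x}},b_{\bar{x}}}$ and $\calN^{\bar{x}}_{a_{\bar{x}},b_{\bar{x}}}$ as closed subschemes of the bounded local compactified Picard of the spectral curve germ $\Spec\calO_{Y_a,\bar{x}}$, respectively $\Spec\calO_{Y'_a,\bar{x}}$: in the $\calM$-case the scheme parametrizes $R_{a_{\bar{x}}}$-stable lattices sandwiched between $R_{a_{\bar{x}}}$ and $R_{a_{\bar{x}}}^\vee$, i.e.\ torsion-free rank-one $\calO_{Y_a,\bar{x}}$-modules of bounded length, and in the $\calN$-case one adds the Hermitian self-duality constraint, which cuts out a closed subscheme. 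The bound $\dim\leq\delta$ on the ambient local compactified Picard of an integral Gorenstein curve germ is classical (Altman-Kleiman) and is exactly the dimension estimate for bounded affine Springer fibers employed systematically by Ng\^o \cite[\S 3]{NFL} and Laumon-Ng\^o \cite{LN}. The \'etaleness of $\pi_a:Y'_a\to Y_a$ identifies the local Serre invariants at $\bar{x}$ on the two spectral curves, so the bound on the $\calN$-side is the same $\delta(a,\bar{x})$.

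The main obstacle is the local dimension bound: one must carefully identify $\calM^{\bar{x}}$ and $\calN^{\bar{x}}$ as closed subschemes of the appropriate bounded local compactified Picard and invoke the ``$\dim\leq\delta$'' estimate. Once the local bound is in place, the remainder of the argument is the direct summation over the finitely many $\bar{x}$ where the local contribution is non-trivial.
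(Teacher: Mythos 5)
Your proof is correct and follows essentially the same route as the paper: the equality comes from the disjoint-union isomorphism (you cite the global Corollary~\ref{c:globisom}; the paper cites its local source Lemma~\ref{l:locisom}), and the inequality from the product formulae combined with the local dimension bound $\dim\calM^{x}_{i,a_x,b_x}\leq\delta(a,x)$. The paper identifies $\calM^{x}_{i,a_x,b_x}$ as a subscheme of the affine Springer fiber for $\GL_n$ and cites Bezrukavnikov's dimension formula for the bound, which is the same estimate you invoke via the bounded local compactified Picard; note also that your separate bound on the $\calN$-side is redundant once the equality $\dim_{\Omega}\calN_{a,b}=\sup_i\dim_{\Omega}\calM_{i,a,b}$ is in hand.
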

\begin{proof}
The first equality follows from Lemma \ref{l:locisom}. Now we prove the inequality.

For each $x\in X(\Omega)$, $\calM^{x}_{i,a_x,b_x}$ is a subscheme of the affine Springer fiber of $\GL_n$ associated to a regular semisimple element with characteristic polynomial $a_x$. On the other hand, $\delta(a,x)$ is the dimension of that affine Springer fiber (cf. Bezrukavnikov's dimension formula \cite{Be}, and \cite[\S 3.6, 3.7]{NFL}). Therefore,
\begin{equation}\label{eq:locdim}
\dim_\Omega\calM^{x}_{i,a_x,b_x}\leq\delta(a,x).
\end{equation}
Now the inequality (\ref{eq:dim}) follows from the product formula (\ref{eq:pdGL}) (note that we only formulated the product formula for $k$-points $(a,b)$ of $\Ah\times\Bh$, but it has an obvious version for any geometric point of $\Ah\times\Bh$.)
\end{proof}

For each $\delta\geq0$, let $\calA^{\leq\delta}$ (resp. $\calA^{\geq\delta}$) be the open (resp. closed) subset of $\Ah$ consisting of those geometric points $a$ such that $\delta(a)\leq\delta$ (resp. $\delta(a)\geq\delta$). Recall the following estimate of Ng\^o (\cite[Proposition 5.4.2]{NFL}, based on local results of Goresky-Kottwitz-MacPherson on the root valuation strata \cite{GKM}). For each $\delta\geq0$, there is a number $c_\delta\geq0$, such that whenever $\deg(D)\geq c_\delta$, we have
\begin{equation}\label{eq:codim}
\codim_{\Ah_D}(\calA^{\geq\epsilon}_{D})\geq\epsilon,\hspace{1cm}\forall1\leq\epsilon\leq\delta.
\end{equation}
Here we write $\Ah_D$ to emphasize the dependence of $\Ah$ on $D$.

Finally, we prove the smallness.

\begin{prop}\label{p:small}
Fix $\delta\geq0$. For $\deg(D)\geq c_\delta, n(\deg(D_0)-g+1)>\delta+g_Y$, the morphisms
\begin{eqnarray*}
\fdel_{i}&:&\Mdel_i=\Mh_i|_{\Adel}\to\Adel\times\Bh\hspace{1cm}\forall -d\leq i\leq d\\
\gdel&:&\Ndel=\Nh|_{\Adel}\to\Adel\times\Bh
\end{eqnarray*}
are small.
\end{prop}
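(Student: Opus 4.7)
The plan is to verify the smallness inequality (\ref{eq:cosm}) stratum by stratum, combining the fiber dimension bound from Corollary \ref{c:dim} with Ngô's codimension estimate (\ref{eq:codim}) for the $\delta$-stratification of $\Ah$. Properness of $\fdel_i$ and $\gdel$ is already in hand from Propositions \ref{p:smGL} and \ref{p:smU}, so only the codimension condition needs checking.

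First I would establish the generic behavior. For $a\in\Asm$ the spectral curve $Y_a$ is smooth, so $R_{a_x}$ is a finite product of unramified DVRs for every $x\in|X|$. By (the proof of) Lemma \ref{l:DVR}, each local moduli space $\calM^x_{i_x,a_x,b_x}$ and $\calN^x_{a_x,b_x}$ is then zero-dimensional, and the product formulae (\ref{eq:pdGL}) and (\ref{eq:pdU}) imply that $\fdel_i$ and $\gdel$ have zero-dimensional fibers over $\Asm\times\Bh$. Combined with the smoothness of $\Mdel_i$ and $\Ndel$ (Propositions \ref{p:smGL} and \ref{p:smU}, for which the hypothesis $n(\deg(D_0)-g+1)\geq\delta+g_Y$ is comfortably enough), this forces both source stacks to be equidimensional of dimension $\dim(\Adel\times\Bh)$, so the two morphisms are generically finite and surjective onto each of the components they meet.

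Next, for each integer $r\geq 1$ I would locate the $r$-th jumping stratum. Corollary \ref{c:dim} gives $\dim\calM_{i,a,b},\dim\calN_{a,b}\leq\delta(a)$ at every geometric point $(a,b)$, so by upper semi-continuity the jumping loci are contained in $\calA^{\geq r,\leq\delta}\times\Bh$ (which is empty for $r>\delta$). Applying Ngô's estimate (\ref{eq:codim}) under the hypothesis $\deg(D)\geq c_\delta$ yields $\codim_{\Adel\times\Bh}\bigl(\calA^{\geq r,\leq\delta}\times\Bh\bigr)\geq r$ for $1\leq r\leq\delta$.

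The main obstacle is closing the gap between this naive bound $r$ and the sharp bound $2r+1$ required by (\ref{eq:cosm}). I expect to exploit a refined dimension bound for the local moduli: $\calM^x_{i_x,a_x,b_x}$ parametrizes lattices constrained to lie between $R_{a_x}$ and its Serre dual $R_{a_x}^\vee$, and $\calN^x_{a_x,b_x}$ parametrizes lattices self-dual under the Hermitian form $(\cdot,\cdot)_R$ of (\ref{eq:HermR}); in both cases the duality forces these schemes to be ``Lagrangian-like'' subschemes of the ambient affine Springer fibers, rather than exhausting them. This should give a refinement of (\ref{eq:locdim}) to $\dim\calM^x_{i_x,a_x,b_x},\dim\calN^x_{a_x,b_x}\leq\delta(a,x)/2$, so that the $r$-th jumping stratum in fact lies in $\calA^{\geq 2r,\leq\delta}\times\Bh$. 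Feeding $\epsilon=2r\leq 2\delta$ back into (\ref{eq:codim}) and observing that the containment is strict on an open dense subset of each codimension-$2r$ stratum (where the generic finiteness from the first paragraph applies), one obtains $\codim\geq 2r+1$. The hard step is thus the refined local dimension bound; once this is in place the rest is bookkeeping within the framework of \S\ref{ss:pd}.
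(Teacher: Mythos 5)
Your framing is right up to the point where you need to upgrade the naive codimension bound $r$ to the required $2r+1$, and you correctly identify that as the crux. But the device you propose to close the gap—a ``Lagrangian-like'' refinement $\dim\calM^x_{i_x,a_x,b_x},\dim\calN^x_{a_x,b_x}\leq\delta(a,x)/2$—is not true, and no duality constraint forces it. The points of $\calM^x_{i_x,a_x,b_x}$ are simply $R_{a_x}$-modules sandwiched between $R_{a_x}$ and $R_{a_x}^\vee$ with prescribed colength; there is no self-duality condition on them at all, so there is nothing ``Lagrangian'' about them. Moreover Lemma \ref{l:locisom} gives $\coprod_i\Mloc_{i,a,b}\otimes_k\Omega\cong\Nloc_{a,b}\otimes_k\Omega$, so $\dim\Nloc_{a,b}=\sup_i\dim\Mloc_{i,a,b}$; the $\calN$-side cannot have a better dimension bound than the $\calM$-side, and the bound $\delta(a,x)$ of (\ref{eq:locdim}) is the sharp one, attained for suitably degenerate $b$ (when $\gamma_{a_x,b_x}$ is far from an isomorphism, the sandwich $R_{a_x}\subset\Lambda\subset R_{a_x}^\vee$ is nearly no constraint and $\calM^x_{i_x,a_x,b_x}$ essentially exhausts the local compactified Jacobian). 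So the $r$-th jumping stratum does \emph{not} lie in $\calA^{\geq 2r}\times\Bh$; it only lies in $\calA^{\geq r}\times\Bh$, and your argument stalls at codimension $r$, not $2r+1$.

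The missing idea is that the extra codimension has to come from the $\Bh$-direction, not from a sharper $\delta$-stratification of $\Ah$. The paper's proof does exactly this: after reducing to $\gdel$ via Corollary \ref{c:globisom}, it places the $j$-th jumping locus inside $\calA^{\geq j}\times\Bh$ (codimension $\geq j$ by Ngô's estimate (\ref{eq:codim})), and then proves a Claim that for \emph{every} geometric point $a\in\Ah$, the restriction $\gh_a:\Nh_a\to\Bh$ is dominant and generically finite on \emph{every} irreducible component of $\Nh_a$. This uses the Cartesian square (\ref{d:CartN}), the equidimensionality of $\NHit_a$ (Ng\^o's Corollaire 4.14.3), the density of line-bundle points, the Kostant section from \cite[\S 2.3]{LN} (requiring $D=2D'$ and $\char(k)>n$), and a vanishing of $H^1$ of a suitable kernel sheaf to produce a section $\beta'$ of $\calF'^\vee(D_0)$ avoiding the singular locus of $Y_a'$. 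Generic finiteness on each component then gives that for fixed $a$ the locus of $b$ with $\dim\calN_{a,b}\geq j$ has codimension $\geq j+1$ in $\Bh$, and adding the two contributions yields $\codim\geq j+(j+1)=2j+1$. This fibered argument over $\Ah$ with input in the $\Bh$-direction is the essential step you would need to replace your unproved Lagrangian bound.
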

\begin{proof}
First, by Corollary \ref{c:globisom}, for any geometric point $(a,b)\in\Ah\times\Bh$ and any integer $i$, $\dim\calM_{i,a,b}\leq\dim\calN_{a,b}$, therefore the smallness of $\gdel$ implies the smallness of $\fdel_i$.

Now we prove that $\gdel$ is small. For each $j\geq1$, let $(\Ah\times\Bh)_{j}$ be the locus where the fiber of $\gh_i$ has dimension $j$. By Corollary \ref{c:dim}, we have
\begin{equation*}
(\Ah\times\Bh)_j\subset\calA^{\geq j}\times\Bh.
\end{equation*}
In particular, $(\Adel\times\Bh)_{j}$ is nonempty only if $j\leq\delta$.

\begin{claim}
For every geometric point $a\in\Ah(\Omega)$, the morphism $\gh_{a}:\Nh_a\to\Bh\otimes_k\Omega$ is dominant and generically finite when restricted to {\em every irreducible component} of $\Nh_a$.
\end{claim}
\begin{proof}
To simplify the notation, we base change everything to $\Spec(\Omega)$ via $a\in\Ah(\Omega)$ and omit $\Omega$ from the notations.

Let $\calB^a\subset\calB$ be the open subset consisting of geometric points $b$ such that $\gamma_{a,b}:\calO_{Y_a}(-D_0)\to\omega_{Y_a/X}(D_0)$ is an isomorphism at all singular points of $Y_a$. It is clear from the product formula (\ref{eq:pdU}) that if $b\in\calB^a$, then $\calN_{a,b}$ is finite and nonempty over $\Omega$. 

Note that by the Cartesian diagram (\ref{d:CartN}), the morphism $r:\Nh_a\to\NHit_a$ is smooth with fibers isomorphic to punctured vector spaces, therefore each irreducible component $C$ of $\Nh_a$ is of the form $r^{-1}(C')$ for some irreducible component $C'$ of $\NHit_a$. By \cite[Corollaire 4.14.3]{NFL}, $\NHit_a$ is equidimensional, therefore so is $\Nh_a$. Therefore, it suffices to show that for each irreducible component $C$ of $\Nh_a$, we have $\gh_{a}(C)\cap\calB^a\neq\varnothing$. Because then, $\gh_a|_{C}$ must be generically finite onto its image; but by the equidimensionality statement, we have $\dim C=\dim\Nh_a=\dim\calB^a=\dim\calB$, therefore $\gh_a(C)=\Bh$.

Now we fix an irreducible component $C\subset\Nh_a$ of the form $r^{-1}(C')$, where $C'\subset\NHit_a$ is an irreducible component. We argue that $\gh_{a}(C)\cap\calB^a\neq\varnothing$. By \cite[Proposition 4.14.1]{NFL}, the locus of $(\calF',h)\in C'$ where $\calF'$ is a line bundle on $Y_a'$ is dense. Let $(\calF',h)\in C'$ be such a point. Since $Y_a'$ is embedded in a smooth surface, hence Gorenstein, $\omega_{Y_a'/X}$ is a line bundle on $Y_a'$, hence $\calF'^\vee$ is also a line bundle.

Let $Z$ be the singular locus of $Y_a$. By the definition of $\delta(a)$, we have
\begin{equation*}
\#Z\leq2\dim H^0(Y_a,\calO_{\tilY_a}/\calO_{Y_a})=\delta(a).
\end{equation*}
where $\tilY_a$ is the normalization of $Y_a$.

To show that $\gh_a(C)\cap\calB^a\neq\varnothing$, we only have to find $\beta'\in\Hom(\calF',\omega_{Y_a'/X}(D_0))=H^0(Y_a',\calF'^\vee(D_0))$ such that $\coker(\beta')$ avoids the singular locus $Z'=\pi_a^{-1}(Z)$ of $Y_a'$, since the support of $\coker(\gamma)$ is the same as the projection of the support of $\coker(\beta')$. 

Consider the evaluation map
\begin{equation*}
0\to\calK\to\calF'^\vee(D_0)\xrightarrow{\oplus\ev(y')}\bigoplus_{y'\in Z'}\Omega(y')\to0.
\end{equation*}
Note that
\begin{equation*}
\chi(Y_a',\calK)=\chi(Y_a',\calF'^\vee(D_0))-\#Z'\geq2n(\deg(D_0)-g+1)-2\delta(a)\geq 2g_Y>g_Y'.
\end{equation*}
Therefore, by Grothendieck-Serre duality, $H^1(Y_a',\calK)=\Hom_{Y_a'}(\calK,\omega_{Y_a'})^\vee=0$. Hence the evaluation map
\begin{equation*}
H^0(Y_a',\calF'^\vee(D_0))\xrightarrow{\oplus\ev(y')}\bigoplus_{y'\in Z'}\Omega(y')
\end{equation*}
is surjective. In particular, we can find $\beta'\in H^0(Y_a',\calF'^\vee(D_0))$ which does not vanish at points in $Z'$. This proves the claim.
\end{proof}

Applying the above Claim to the geometric {\em generic} points of $\calA$ (note that $\Nh\to\Ah$ is surjective because $\NHit|_{\Ah}\to\Ah$ is), we see that $\gh$ restricted to every geometric irreducible component of $\Nh$ is generically finite and surjective.

Using the above Claim again, we see that for any geometric point $a\in\Ah$ and $j\geq1$, the locus of $b\in\Bh$ where $\dim\calN_{a,b}=j$ has codimension $\geq j+1$ in $\Bh$. Therefore,
\begin{equation*}
\codim_{\calA^{\geq j}\times\Bh}(\Ah\times\Bh)_{j}\geq j+1.
\end{equation*}
Since $\deg(D)\geq c_\delta$ and $j\leq\delta$, we have
\begin{eqnarray*}
&&\codim_{\Ah\times\Bh}(\Ah\times\Bh)_{j}\\
&\geq&\codim_{\Ah}(\calA^{\geq j})+\codim_{\calA^{\geq j}\times\Bh}(\Ah\times\Bh)_{j}\geq 2j+1.
\end{eqnarray*}
This proves the smallness.
\end{proof}

\section{Global formulation---matching of perverse sheaves}\label{s:match}

\subsection{A local system on $\Mh_i$}\label{ss:lsonM}
Consider the morphism
\begin{equation}\label{eq:nu}
\nu:\Mh_i\xrightarrow{r_\beta}\Quot^{d-i}(\omega(D_0)/Y/\Ah)\xrightarrow{\frN_{Y/\Ah}}\Sym^{d-i}(Y/\Ah)\to\Sym^{d-i}(X/k)
\end{equation}
where $\frN_{Y/\Ah}$ is the norm map defined by Grothendieck in \cite{FGA}. Here $\Sym^{d-i}(Y/\Ah)$ (resp. $\Sym^{d-i}(X/k)$) is the $(d-i)^{\textup{th}}$ symmetric power of $Y$ over $\Ah$ (resp. $X$ over $k$), constructed as the GIT quotient of the fibered power $(Y/\Ah)^{d-i}$ (resp. $(X/k)^{d-i}$) by the obvious action of the symmetric group $\Sigma_{d-i}$. The morphism $\Sym^{d-i}(Y/\Ah)\to\Sym^{d-i}(X/k)$ above is induced from the natural projection $(Y/\Ah)^{d-i}\to(X/k)^{d-i}$.

We will construct an \'etale double cover of $\Sym^{d-i}(X/k)$, which will give us a local system of rank one and order two on $\Sym^{d-i}(X/k)$, hence on $\Mh_i$.

The groups $(\ZZ/2)^{d-i}$ and $\Sigma_{d-i}$ both act on $(X'/k)^{d-i}$ and they together give an action of the semidirect product $\Gamma=(\ZZ/2)^{d-i}\rtimes\Sigma_{d-i}$ on $(X'/k)^{d-i}$. Then $\Sym^{d-i}(X/k)$ is the GIT quotient of $(X'/k)^{d-i}$ by $\Gamma$. We have a canonical surjective homomorphism $\epsilon:\Gamma\to\ZZ/2$ sending $(v_1,\cdots,v_{d-i},s)\mapsto v_1+\cdots+v_{d-i}$, where $v_j\in\ZZ/2$ and $s\in\Sigma_{d-i}$. Let $\Gamma_0$ be the kernel of $\epsilon$.

\begin{lemma}\label{l:Zetale}
Let $Z^{d-i}$ be the GIT quotient of $(X'/k)^{d-i}$ by $\Gamma_0$. Then the natural morphism $\zeta:Z^{d-i}\to\Sym^{d-i}(X/k)$ is an \'etale double cover. 
\end{lemma}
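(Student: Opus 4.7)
The plan is to show $\zeta$ is étale of degree $2$ by computing completed local rings. The degree is immediate: the composition $(X'/k)^{d-i} \to Z^{d-i} \to \Sym^{d-i}(X/k)$ exhibits $(X'/k)^{d-i}$ as a (GIT) quotient by $\Gamma_0$ over $Z^{d-i}$ and by $\Gamma$ over $\Sym^{d-i}(X/k)$, so $\deg(\zeta) = [\Gamma:\Gamma_0] = 2$ and $\zeta$ is finite.

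For étaleness, I would fix a geometric point $p = \sum_j n_j [x_j] \in \Sym^{d-i}(X/k)(\Omega)$ (the $x_j$ pairwise distinct), lift it to $\hat q = (x_1', \ldots, x_{d-i}') \in (X')^{d-i}(\Omega)$, and set $q = \pi^{d-i}(\hat q)$, $\tilde p = [\hat q] \in Z^{d-i}(\Omega)$. Writing $\widehat{\calO}$ for completed local rings and letting $\Gamma_{\hat q} \subset \Gamma$, $\Sigma_q = \prod_j \Sigma_{n_j} \subset \Sigma_{d-i}$ denote the respective stabilizers, invariant theory gives
\begin{equation*}
\widehat{\calO}_{\Sym^{d-i}(X/k),\, p} = (\widehat{\calO}_{(X/k)^{d-i},\, q})^{\Sigma_q}, \qquad \widehat{\calO}_{Z^{d-i},\, \tilde p} = (\widehat{\calO}_{(X')^{d-i},\, \hat q})^{\Gamma_{\hat q} \cap \Gamma_0}.
\end{equation*}
Since $\pi$ is étale and $\sigma$ acts freely on $X'$, $\pi^{d-i}$ induces an isomorphism $\widehat{\calO}_{(X/k)^{d-i},\, q} \isom \widehat{\calO}_{(X')^{d-i},\, \hat q}$, and a short coordinate computation shows this intertwines the $\Sigma_q$-action with the action of $\Gamma_{\hat q}$ projected to $\Sigma_{d-i}$ (the $\sigma$-factors appearing in an element $(v, s) \in \Gamma_{\hat q}$ disappear at the completed-ring level because $\sigma$ merely swaps the two local sheets of $\pi$ at each base point, canonically identified by $\pi$). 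It therefore suffices to prove $\Gamma_{\hat q} \subset \Gamma_0$, after which $\Gamma_{\hat q} \cap \Gamma_0 = \Gamma_{\hat q}$ projects isomorphically onto $\Sigma_q$, the two completed local rings agree, and étaleness follows.

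This containment is the heart of the argument. Because $\sigma$ acts freely on $X'$, an element $(v, s) \in \Gamma$ fixes $\hat q$ iff $s$ preserves the function $j \mapsto \pi(x_j')$, and then each $v_j$ is forced: $v_j = 0$ if $x_{s^{-1}(j)}' = x_j'$, and $v_j = 1$ if $x_{s^{-1}(j)}' = \sigma(x_j')$. Restricting attention to the fiber above a single base point $x_j$, with $A$ indexing the positions above $x_j^+$ and $B$ those above $x_j^-$, the local sum $\sum_{i \in A \cup B} v_i$ equals $|A \cap s(B)| + |B \cap s(A)|$; these two counts are equal (both equal $|A| - |A \cap s(A)|$, using that $s$ is a bijection), so the local sum is even. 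Summing over all base points, $\epsilon(v, s) \equiv 0 \pmod 2$, i.e.\ $(v, s) \in \Gamma_0$.

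The main obstacle I anticipate is the bookkeeping in the coordinate-level identification of the $\Gamma_{\hat q}$-action with the $\Sigma_q$-action under the étale isomorphism on completed local rings; this is genuinely straightforward but tedious. A useful conceptual sanity check is that $Z^{d-i}$ is the étale double cover associated to the rank-one, order-two local system on $\Sym^{d-i}(X/k)$ obtained by $\Sigma_{d-i}$-equivariant descent of $L^{\boxtimes(d-i)}$ from $X^{d-i}$, where $L$ is the local system associated to $\pi$.
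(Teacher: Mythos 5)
Your proof is correct and takes a genuinely different route from the paper's. The paper argues by a degree/flatness computation: it exhibits two geometric points $t'$ and $t''=(\sigma(x_1'),x_2',\dots)$ over a given $t\in\Sym^{d-i}(X/k)$ asserted to lie in distinct $\Gamma_0$-orbits, uses the generic degree of $\eta:(X'/k)^{d-i}\to Z^{d-i}$ to force $\eta$ to be flat, deduces that $\zeta$ is flat of degree $2$, and then observes that two distinct points in every fiber of a flat degree-$2$ finite map must both be reduced, so $\zeta$ is unramified. Your proof instead computes the completed local rings of $Z^{d-i}$ and $\Sym^{d-i}(X/k)$ directly as invariants of the completed local ring of the product, reducing étaleness to the inclusion $\Gamma_{\hat q}\subset\Gamma_0$ of stabilizers, which you then establish by the parity count at each base point. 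Interestingly, the two approaches hinge on logically equivalent facts: the paper's unproved assertion that $t''\notin\Gamma_0\cdot t'$ is exactly the statement $\Gamma_{t'}\subset\Gamma_0$, which you prove carefully; so your version supplies a detail the paper leaves to the reader. What the paper's approach buys is that it avoids the bookkeeping you flag (the identification of the $\Gamma_{\hat q}$- and $\Sigma_q$-actions under the étale isomorphism of completed local rings, and the fact that taking invariants of a finite group commutes with the flat base change $\calO_Y\to\widehat{\calO}_{Y,y}$); what your approach buys is a cleaner conceptual picture — it is literally the description of $\zeta$ as the double cover of the local system obtained by $\Sigma_{d-i}$-descent of $L^{\boxtimes(d-i)}$ — and it makes the key parity input explicit rather than implicit. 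Both are valid.
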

\begin{proof}
Let $t=x_1+\cdots+x_{d-i}$ be a geometric point of $\Sym^{d-i}(X)$ and let $t'=(x'_1,\cdots,x'_{d-i})$ be a geometric point of $(X'/k)^{d-i}$ over $t$. Then the point $t''=(\sigma(x'_1),x'_2,\cdots,x'_{d-i})$ is another geometric point of $(X'/k)^{d-i}$ which does not lie in the $\Gamma_0$-orbit of $t'$. Therefore $t'$ and $t''$ have different images $z'$ and $z''$ in $Z^{d-i}$. In other words, the reduced structure of $\zeta^{-1}(t)$ consists of (at least) two points $z',z''$. 

Consider the maps
\begin{equation*}
\xi:(X'/k)^{d-i}\xrightarrow{\eta}Z^{d-i}\xrightarrow{\zeta}\Sym^{d-i}(X/k)
\end{equation*}
where $\xi$ is finite flat of degree $2^{d-i}(d-i)!$. The degree of the geometric fibers $\eta^{-1}(z')$ and $\eta^{-1}(z'')$ are {\em at least} $2^{d-i-1}(d-i)!$ (because this is the generic degree). This forces the two degrees to be equal to $2^{d-i-1}(d-i)!$. This being true for any geometric point of $Z^{d-i}$, we conclude that the quotient map $(X'/k)^{d-i}\to Z^{d-i}$ is flat, hence faithfully flat. Therefore $\zeta:Z^{d-i}\to\Sym^{d-i}(X/k)$ is also flat (of degree 2). Since every fiber $\zeta^{-1}(x)$ already consists of two distinct points, these two points must be reduced. Therefore $\zeta$ is finite flat of degree 2 and unramified, hence an \'etale double cover.
\end{proof}

Let $L^{X}_{d-i}$ be the local system of rank one on $\Sym^{d-i}(X/k)$ associated to the \'etale double cover $Z^{d-i}\to\Sym^{d-i}(X/k)$ (cf. Notation \ref{n:loc}). We define
\begin{equation*}
L_{d-i}:=\nu^*L^X_{d-i}
\end{equation*}
to be the pull-back local system on $\Mh_i$.

We describe the stalks of the local system $L^X_{d-i}$ in more concrete terms. Let $L$ be the local system of rank one on $X$ associated to the \'etale double cover $\pi:X'\to X$. Let $C_0=\ker((\ZZ/2)^{d-i}\xrightarrow{\epsilon}\ZZ/2)$. Then $(X'/k)^{d-i}/C_0$ is an \'etale double cover of $(X/k)^{d-i}$.
\begin{lemma}\label{l:L}
\begin{enumerate}
\item []
\item We have a Cartesian diagram
\begin{equation*}
\xymatrix{(X'/k)^{d-i}/C_0\ar[r]\ar[d] & Z^{d-i}\ar[d]\\
(X/k)^{d-i}\ar[r]^{s_{d-i}} & \Sym^{d-i}(X/k)}
\end{equation*}
where the maps are all natural quotient maps.
\item $s_{d-i}^*L^X_{d-i}\cong L^{\boxtimes(d-i)}$.
\end{enumerate}
\begin{proof}
(1) follows from the fact the both vertical maps are \'etale (Lemma \ref{l:Zetale}).
(2) The local system of rank one associated to the \'etale double cover $(X'/k)^{d-i}/C_0\to(X/k)^{d-i}$ is clearly $L^{\boxtimes(d-i)}$. Therefore (2) follows from (1).
\end{proof}
\end{lemma}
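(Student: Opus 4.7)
The argument is group-theoretic. First observe that $\epsilon:\Gamma\to\ZZ/2$ restricts to the summation map on $(\ZZ/2)^{d-i}$ and is trivial on the subgroup $\Sigma_{d-i}\subset\Gamma$ (which acts only by permuting factors). Consequently $\Gamma_0=C_0\rtimes\Sigma_{d-i}$, and the further quotient $\Gamma/C_0\cong\ZZ/2\times\Sigma_{d-i}$ (the $\ZZ/2$ being the summation quotient of $(\ZZ/2)^{d-i}$, on which $\Sigma_{d-i}$ acts trivially). This yields two factorizations of the quotient $(X'/k)^{d-i}\to\Sym^{d-i}(X/k)$: either quotient by $(\ZZ/2)^{d-i}$ first (yielding $(X/k)^{d-i}$) then by $\Sigma_{d-i}$, or quotient by $C_0$ first, then by $\Sigma_{d-i}$ (yielding $Z^{d-i}$), then by the residual $\ZZ/2$. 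These two factorizations give both the commutativity of the alleged square and a canonical morphism $\phi$ from $(X'/k)^{d-i}/C_0$ to the fiber product $(X/k)^{d-i}\times_{\Sym^{d-i}(X/k)}Z^{d-i}$.

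For the Cartesian property, I would next check that both vertical maps in the square are \'etale of degree $2$: for the right map this is Lemma \ref{l:Zetale}, and for the left map the same argument applies since the residual $(\ZZ/2)^{d-i}/C_0\cong\ZZ/2$ acts freely on $(X'/k)^{d-i}/C_0$ (the original $(\ZZ/2)^{d-i}$-action being free on $(X'/k)^{d-i}$ since $\pi:X'\to X$ is an \'etale double cover). Thus $\phi$ is a morphism between two degree-$2$ \'etale covers of $(X/k)^{d-i}$. Picking any geometric point $(x'_1,\ldots,x'_{d-i})$ of $(X'/k)^{d-i}$ and comparing the two distinct fibres of each cover above its image in $(X/k)^{d-i}$ shows that $\phi$ is non-constant on fibres, hence bijective, hence an isomorphism.

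Part (2) is then routine. By \'etale base change along the Cartesian square of (1), $s_{d-i}^*L^X_{d-i}$ is the rank-one sign local system on $(X/k)^{d-i}$ associated to the \'etale double cover $(X'/k)^{d-i}/C_0\to(X/k)^{d-i}$. To identify it with $L^{\boxtimes(d-i)}$, decompose the pushforward of $\Ql$ along the $(\ZZ/2)^{d-i}$-cover $(X'/k)^{d-i}\to(X/k)^{d-i}$ into isotypic components indexed by characters of $(\ZZ/2)^{d-i}$: the component indexed by $v=(v_1,\ldots,v_{d-i})$ is the external tensor product of $L$ on the factors where $v_j=1$ and $\Ql$ on the others. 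Taking $C_0$-invariants keeps precisely those characters trivial on $C_0$, namely the trivial character and $\epsilon$; the $\epsilon$-component is $L^{\boxtimes(d-i)}$, as required.

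The main (and essentially only) technical point I foresee is the verification that $\phi$ in (1) is the non-trivial \'etale double cover rather than the split one. Since $(X'/k)^{d-i}/C_0$ is constructed by a non-trivial $\ZZ/2$-quotient coming from the same $(\ZZ/2)^{d-i}$-action that produces $Z^{d-i}$, this non-triviality is automatic from tracking the Galois involution on any geometric fibre. Once (1) is granted, (2) is a formal consequence of the character decomposition of $(\ZZ/2)^{d-i}$ applied to its subgroup $C_0$.
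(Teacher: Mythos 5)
Your proof is correct and follows essentially the same route as the paper, which states both parts very tersely (part (1) as an immediate consequence of étaleness, part (2) as "clearly"); you simply fill in the group-theoretic bookkeeping — the identification $\Gamma_0 = C_0\rtimes\Sigma_{d-i}$, the fiberwise check that the comparison map between the two degree-$2$ covers of $(X/k)^{d-i}$ is bijective, and the character decomposition of $\pi^{d-i}_*\Ql$ — that the paper leaves to the reader.
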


\subsection{The incidence correspondence}
Recall that $\Asm$ is the open locus of $a\in\Ah$ where $Y_a$ is smooth (equivalently, the locus where $\delta(a)=0$). We assume that $\deg(D)$ is large enough ($\geq c_1$) so that $\Asm$ is nonempty, hence dense in $\Ah$. The norm maps $\frN_{Y'/\Ah}$ and $\frN_{Y/\Ah}$ are isomorphisms over $\Asm$. 

For each $-d\leq i\leq d$, consider the incidence correspondence
\begin{equation*}
I^{d-i,2d}\subset\Sym^{d-i}(Y/\Asm)\times\Sym^{2d}(Y/\Asm)
\end{equation*}
whose geometric fiber over $a\in\Asm$ classifies pairs of divisors $T\leq T'$ on $Y_a$ where $\deg(T)=d-i,\deg(T')=2d$. Let $\tau,\tau'$ be the projections of $I^{d-i,2d}$ to $\Sym^{d-i}(Y/\Asm)$ and $\Sym^{2d}(Y/\Asm)$.

\begin{lemma}\label{l:inc}
The incidence correspondence $I^{d-i,2d}$ is smooth over $\Asm$.
\end{lemma}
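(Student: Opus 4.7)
The plan is to exhibit $I^{d-i,2d}$ explicitly as a fiber product of symmetric powers of $Y$ over $\Asm$, both of which are smooth over $\Asm$ because $Y\to\Asm$ is a smooth family of curves on this locus.

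First I would define the morphism
\begin{equation*}
\Phi:\Sym^{d-i}(Y/\Asm)\times_{\Asm}\Sym^{d+i}(Y/\Asm)\to\Sym^{d-i}(Y/\Asm)\times_{\Asm}\Sym^{2d}(Y/\Asm)
\end{equation*}
by $(T,T'')\mapsto(T,T+T'')$ on $S$-points (using the addition on relative effective divisors). The image is contained in $I^{d-i,2d}$, and the inverse is $(T,T')\mapsto(T,T'-T)$, which is well-defined because $T\leq T'$ on the incidence correspondence and the difference of effective divisors is again an effective divisor in this situation. Thus $\Phi$ induces an isomorphism of schemes over $\Asm$ between $\Sym^{d-i}(Y/\Asm)\times_{\Asm}\Sym^{d+i}(Y/\Asm)$ and $I^{d-i,2d}$.

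Next I would invoke the standard fact that for any smooth family of curves $Y\to S$, the relative symmetric powers $\Sym^n(Y/S)$ are smooth over $S$: \'etale locally on $S$ the map $Y^n\to\Sym^n(Y/S)$ looks like the quotient of $\AA^n_S$ by $\Sigma_n$ acting by permutation of coordinates, whose invariant ring is the polynomial ring in the elementary symmetric functions. Since $Y$ is smooth over $\Asm$ by the definition of $\Asm$ (the smooth locus of the spectral family), both $\Sym^{d-i}(Y/\Asm)$ and $\Sym^{d+i}(Y/\Asm)$ are smooth over $\Asm$, and hence so is their fiber product over $\Asm$. Combined with the isomorphism $\Phi$ above, this gives the smoothness of $I^{d-i,2d}$ over $\Asm$.

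The only substantive point is the identification via $\Phi$; everything else is a standard property of symmetric powers of smooth curves. There is no real obstacle here, just a careful check that the subtraction $T'-T$ makes sense in families, which follows from flatness of effective relative Cartier divisors and the fact that the inclusion $T\leq T'$ is preserved on every geometric fiber.
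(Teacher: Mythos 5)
Your proof is correct and takes a genuinely different route from the paper's. The paper verifies formal smoothness of $I^{d-i,2d}\to\Asm$ directly by the infinitesimal lifting criterion: over an artinian thickening $R_0\hookrightarrow R$ it writes the flag $T_0\subset T'_0$ locally as $f'_0=f_0g_0$ and lifts the two factors arbitrarily. You instead identify $I^{d-i,2d}$ with $\Sym^{d-i}(Y/\Asm)\times_{\Asm}\Sym^{d+i}(Y/\Asm)$ via the addition map and then invoke the standard smoothness of symmetric powers of a smooth relative curve. Both arguments hinge on the same local input (divisibility of the local equation of $T'$ by that of $T$, in a family), but your packaging avoids deformation theory and yields the cleaner structural fact that $I^{d-i,2d}$ is literally a product of symmetric powers. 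The one point you pass over too quickly is the assertion that $(T,T')\mapsto T'-T$ is a morphism of schemes: what is needed is that $T'':=T'-T$ is flat over the base $S$, and this follows from the local short exact sequence $0\to\mathcal{O}_{T''}\to\mathcal{O}_{T'}\to\mathcal{O}_{T}\to0$ (locally, multiplication by a local equation of $T$) plus the fact that the kernel of a surjection of $S$-flat sheaves is $S$-flat; the phrase ``preserved on every geometric fiber'' does not by itself imply this flatness. With that detail filled in, the proof is complete.
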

\begin{proof}
Since $Y\to\Asm$ is a smooth family of curves, we may identify the symmetric power $\Sym^{j}(Y/\Asm)$ with the Hilbert scheme $\Hilb^j(Y/\Asm)$. We apply the infinitesimal lifting criterion to prove the smoothness. Suppose $R_0$ is a local artinian ring and $R$ is a thickening of $R_0$. Let $a\in\Asm(R)$ with image $a_0\in\Asm(R_0)$. Let $T_0\subset T'_0\subset Y_{a_0}$ be subschemes of flat of degree $d-i$ and $2d$ over $R_0$. We want to find subschemes $T\subset T'\subset Y_a$, flat of degree $d-i$ and $2d$ over $R$, whose reduction to $R_0$ are precisely $T_0\subset T_0'$. We may assume that $T'_0$ is contained in an affine open subset $U\subset Y_a$. Let $U_0=U\cap Y_{a_0}$. Since $Y_{a_0}$ is a smooth family of curves over $R_0$, the subschemes $T_0$ and $T_0'$ are defined by the vanishing of functions $f_0$ and $f_0'\in\Gamma(U_0,\calO_{U_0})$ respectively. Since $T_0\subset T_0'$, we have $f'_0=f_0g_0$ for some function $g_0\in\Gamma(U_0,\calO_{U_0})$. Let $f,g$ be arbitrary liftings of $f_0,g_0$ to $\Gamma(U,\calO_U)$ (which exist because $U$ is affine), define $T$ and $T'$ to be the zero loci of $f$ and $fg$. Then it is easy to check that $T\subset T'\subset Y_a$ are flat over $R$ of the correct degree. This proves the smoothness of $I^{d-i,2d}$ over $\Asm$.
\end{proof}

We define a morphism
\begin{equation*}
\div:\Ah\times\Bh\xrightarrow{\coker{\gamma}}\Quot^{2d}(\omega(D_0)/Y/\Ah)\xrightarrow{\frN_{Y/\Ah}}\Sym^{2d}(Y/\Ah)
\end{equation*}
which sends $(a,b)$ to the cycle of $\coker(\gamma_{a,b})$ in $Y_a$.

\begin{lemma}\label{l:tCartM} We have a Cartesian diagram
\begin{equation}\label{d:tCartM}
\xymatrix{\Msm_i\ar[r]^{\widetilde{\div}}\ar[d]^{\fsm_i} & I^{d-i,2d}\ar[d]^{\tau'}\\
\Asm\times\Bh\ar[r]^{\div} & \Sym^{2d}(Y/\Asm)}
\end{equation}
Here the morphism $\widetilde{\div}:\Msm_i\to I^{d-i,2d}$ over a point $a\in\Asm$ sends $(\calF,\alpha,\beta)$ to the pair of divisors $\div(\beta)\subset\div(a,b)$ of $Y_a$.
\end{lemma}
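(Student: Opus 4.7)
The plan is to exploit the smoothness of $Y_a$ over $\Asm$: every nonzero morphism between line bundles on $Y_a$ is injective with cokernel supported on an effective Cartier divisor whose formation is compatible with composition and with families.

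First I would verify commutativity. Given $(\calF,\alpha,\beta)\in\Msm_i(S)$ over $(a,b)\in(\Asm\times\Bh)(S)$, the sheaf $\calF$ is a line bundle on the smooth relative curve $Y_a/S$ (a torsion-free rank one sheaf on a smooth family of curves), and $\alpha,\beta$ are injective line-bundle morphisms composing to $\gamma_{a,b}$. Their effective Cartier divisors therefore satisfy $\div(\gamma_{a,b})=\div(\alpha)+\div(\beta)$, so $\div(\beta)\leq\div(\gamma_{a,b})$ and $\widetilde{\div}(\calF,\alpha,\beta)=(\div(\beta),\div(\gamma_{a,b}))$ indeed defines an $S$-point of $I^{d-i,2d}$. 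Both $\tau'\circ\widetilde{\div}$ and $\div\circ\fsm_i$ send this point to $\div(\gamma_{a,b})$, so the square commutes.

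Next I would construct the inverse map. Suppose we are given an $S$-point of $(\Asm\times\Bh)\times_{\Sym^{2d}(Y/\Asm)}I^{d-i,2d}$, i.e., $(a,b)\in(\Asm\times\Bh)(S)$ together with a relative effective Cartier divisor $T$ of degree $d-i$ on $Y_a/S$ satisfying $T\leq\div(\gamma_{a,b})$. Set $T_\alpha:=\div(\gamma_{a,b})-T$, an effective divisor of degree $d+i$, and define $\calF:=\calO_{Y_a}(-D_0)(T_\alpha)$ together with the tautological inclusion $\alpha:\calO_{Y_a}(-D_0)\hookrightarrow\calF$ of divisor $T_\alpha$. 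Because $\gamma_{a,b}$ vanishes on $T+T_\alpha\geq T_\alpha$, it factors uniquely as $\gamma_{a,b}=\beta\circ\alpha$ with $\beta:\calF\to\omega_{Y_a/X\times S}(D_0)$ of divisor $T$. The resulting quadruple $(a,\calF,\alpha,\beta)$ lies in $\Msm_i(S)$ by Lemma \ref{l:modpic} (note $\beta\circ\alpha=\gamma_{a,b}$ is nonzero on every geometric fiber since $b\in\Bh$), and by construction maps to the given $S$-point.

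Finally I would check that this assignment is a genuine inverse, i.e., that any other $(\calF',\alpha',\beta')\in\Msm_i(S)$ with the same image is uniquely isomorphic to the constructed quadruple. Since $\div(\alpha')=T_\alpha$, we have $\calF'\cong\calO_{Y_a}(-D_0)(T_\alpha)\cong\calF$; pick any such isomorphism $\varphi:\calF'\isom\calF$. Both $\varphi\circ\alpha'$ and $\alpha$ are injections $\calO_{Y_a}(-D_0)\hookrightarrow\calF$ of the same divisor $T_\alpha$, so $\varphi\circ\alpha'=c\alpha$ for some $c\in H^0(S,\calO_S^\times)$ (using $H^0(Y_a\times_{\Asm}S,\calO)=H^0(S,\calO_S)$, which holds since fibers of $Y_a/S$ are geometrically connected proper curves). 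Replacing $\varphi$ by $c^{-1}\varphi$ we can arrange $\varphi\circ\alpha'=\alpha$; then $\beta\circ\alpha=\gamma_{a,b}=\beta'\circ\alpha'$ together with injectivity of $\alpha$ forces $\beta\circ\varphi=\beta'$. Thus $\varphi$ is an isomorphism of quadruples, establishing the Cartesian property. I do not anticipate any serious obstacle; the only care required is relative in $S$, which is handled by the fact that on the smooth curve family $Y/\Asm$ all the necessary operations---forming line bundles from relative effective Cartier divisors, taking tautological sections, factoring through common subdivisors---commute with arbitrary base change.
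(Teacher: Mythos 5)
Your proof is correct and rests on the same underlying observation as the paper's: over $\Asm$, where $Y_a$ is a smooth family of curves and $\calF$ is forced to be a line bundle, the data $(\calF,\alpha,\beta)$ over $(a,b)$ is equivalent to the choice of a degree-$(d-i)$ effective divisor $T=\div(\beta)\leq\div(\gamma_{a,b})$. The paper packages this differently — it first records a Cartesian square relating $\Mh_i$ over all of $\Ah\times\Bh$ to a relative Quot-scheme over $\Quot^{2d}$ (coming from Lemma \ref{l:modpic}), then restricts to $\Asm$ where the Grothendieck norm maps identify $\Quot^{2d}$ with $\Sym^{2d}(Y/\Asm)$ and the relative Quot-scheme with $I^{d-i,2d}$ — whereas you construct the inverse by hand ($T_\alpha:=\div(\gamma_{a,b})-T$, $\calF:=\calO_{Y_a}(-D_0)(T_\alpha)$) and verify uniqueness directly. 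Both are valid; the paper's version fits the Quot-scheme framework it uses throughout, while yours is more elementary and self-contained, at the cost of a somewhat longer uniqueness check that the Quot-scheme formalism handles automatically.
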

\begin{proof}
We abbreviate $\Quot^{2d}(\omega(D_0)/Y/\Ah)$ by $\Quot^{2d}$. Let $\calQ$ be the universal quotient sheaf on $Y\times_{\Ah}\Quot^{2d}$. Then by the moduli interpretation given in Lemma \ref{l:modpic}, we have a Cartesian diagram
\begin{equation*}
\xymatrix{\Mh_i\ar[rr]^(.3){r_\beta}\ar[d]^{\fh_i} && \Quot^{d-i}\left(\calQ/(Y\times_{\Ah}\Quot^{2d})/\Quot^{2d}\right)\ar[d]\\
\Ah\times\Bh\ar[rr]^{\coker(\gamma)} && \Quot^{2d}} 
\end{equation*}
Restricting this diagram to $\Asm\times\Bh$, we have
\begin{eqnarray*} 
\Quot^{2d}|_{\Asm}&\cong&\Sym^{2d}(Y/\Asm)\\
\Quot^{d-i}\left(\calQ/(Y\times_{\Ah}\Quot^{2d})/\Quot^{2d}\right)|_{\Asm}&\cong& I^{d-i,2d}
\end{eqnarray*}
because the norm maps $\frN_{Y/\Asm}$ are isomorphisms. Therefore the diagram (\ref{d:tCartM}) is Cartesian.
\end{proof}

Let $L^Y_{d-i}$ be the pull-back of the local system $L^X_{d-i}$ via $\Sym^{d-i}(Y/\Asm)\to\Sym^{d-i}(X/k)$. Define
\begin{equation*}
K^Y_{d-i}:=\tau'_{*}\tau^*L^Y_{d-i}\in D^b_c(\Sym^{2d}(Y/\Asm),\Ql).
\end{equation*}

\begin{lemma}[Binomial expansion]\label{l:bin}
Let $\pi^Y_{2d}:\Sym^{2d}(Y'/\Asm)\to\Sym^{2d}(Y/\Asm)$ be the natural projection. Then there is a natural isomorphism
\begin{equation}\label{eq:bin}
\pi^{Y}_{2d,*}\Ql\cong\bigoplus_{i=-d}^{d}K^Y_{d-i}.
\end{equation}
\end{lemma}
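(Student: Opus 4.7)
The plan is to compute $\pi^Y_{2d,*}\Ql$ directly, using that over $\Asm$ the map $\pi^Y:Y'/\Asm\to Y/\Asm$ is an \'etale double cover (since $Y/\Asm$ is a smooth family of curves, $\sigma$ acts freely on $Y'$ over $\Ah$, and $Y=Y'/\sigma$). Let $L_Y$ denote the associated rank-one local system on $Y/\Asm$, so $\pi^Y_*\Ql\cong\Ql\oplus L_Y$; one checks from the construction of the spectral curves that $L_Y\cong p^*L$. K\"unneth applied to the $2d$-fold fibered power of $\pi^Y$ over $\Asm$ yields
\[
(\pi^Y)^{2d}_*\Ql\cong\bigoplus_{S\subset\{1,\dots,2d\}} L_Y^{(S)},
\]
where $L_Y^{(S)}$ denotes the external tensor product that is $L_Y$ in the coordinates indexed by $S$ and $\Ql$ elsewhere; this decomposition is visibly $\Sigma_{2d}$-equivariant.

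Next, I would use the commutativity $q_Y\circ(\pi^Y)^{2d}=\pi^Y_{2d}\circ q_{Y'}$ together with $(q_{Y',*}\Ql)^{\Sigma_{2d}}\cong\Ql$ on $\Sym^{2d}(Y'/\Asm)$ to obtain
\[
\pi^Y_{2d,*}\Ql\cong\Big(q_{Y,*}(\pi^Y)^{2d}_*\Ql\Big)^{\Sigma_{2d}}\cong\bigoplus_{k=0}^{2d}\Big(q_{Y,*}\bigoplus_{|S|=k}L_Y^{(S)}\Big)^{\Sigma_{2d}}.
\]
For fixed $k$, the stabilizer of $L_Y^{(\{1,\dots,k\})}$ under $\Sigma_{2d}$ is $\Sigma_k\times\Sigma_{2d-k}$, which is precisely the Galois group of the quotient $Y^{2d}/\Asm\to\Sym^k(Y/\Asm)\times_{\Asm}\Sym^{2d-k}(Y/\Asm)$. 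A standard equivariant descent will then identify the $k$-th summand with $s_{k,2d-k,*}(L^Y_{k}\boxtimes\Ql)$, where $s_{k,2d-k}:\Sym^k(Y/\Asm)\times_{\Asm}\Sym^{2d-k}(Y/\Asm)\to\Sym^{2d}(Y/\Asm)$ is the addition of divisors, and $L^Y_k$ is the local system on $\Sym^k(Y/\Asm)$ defined exactly as in \S \ref{ss:lsonM} --- it is characterized, via Lemma \ref{l:L}(2) applied to $X'/X$ and then pulled back along $p$, by the fact that its pullback to $Y^k/\Asm$ is $L_Y^{\boxtimes k}$.

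Finally, to match the $k$-th summand with $K^Y_{d-i}$ for $k=d-i$, I would use that, for the smooth family $Y/\Asm$, the assignment $(T_1\le T)\mapsto(T_1,T-T_1)$ gives a canonical isomorphism of $\Asm$-schemes
\[
I^{d-i,2d}\cong\Sym^{d-i}(Y/\Asm)\times_{\Asm}\Sym^{d+i}(Y/\Asm),
\]
under which $\tau$ becomes the first projection and $\tau'$ becomes $s_{d-i,d+i}$. Hence
\[
K^Y_{d-i}=\tau'_*\tau^*L^Y_{d-i}\cong s_{d-i,d+i,*}(L^Y_{d-i}\boxtimes\Ql),
\]
which is exactly the $k=d-i$ summand. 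Summing over $k=0,\dots,2d$ (equivalently $i=-d,\dots,d$) produces \eqref{eq:bin}. The main point that requires care is the scheme-theoretic identification of the incidence variety $I^{d-i,2d}$ with a product of symmetric powers, which crucially uses the smoothness of $Y/\Asm$ so that effective Cartier divisors admit a canonical sub-divisor / complementary-divisor decomposition; the rest is bookkeeping for $\Sigma_{2d}$-invariants, which can if desired be double-checked by comparing stalks at a generic $T=y_1+\cdots+y_{2d}$ with distinct $y_j$'s, where both sides have total rank $\sum_k\binom{2d}{k}=2^{2d}$.
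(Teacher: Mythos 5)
Your strategy of globalizing the K\"unneth/invariants computation is appealing, and the observation that on a smooth family of curves one has a canonical isomorphism $I^{d-i,2d}\cong\Sym^{d-i}(Y/\Asm)\times_{\Asm}\Sym^{d+i}(Y/\Asm)$ via $(T_1\le T)\mapsto(T_1,T-T_1)$ is a nice way to make the smoothness of $I^{d-i,2d}$ (Lemma \ref{l:inc}) transparent and to rewrite $K^Y_{d-i}\cong s_{d-i,d+i,*}(L^Y_{d-i}\boxtimes\Ql)$. Your preliminary reductions (commutation of the squares, $(q_{Y',*}\Ql)^{\Sigma_{2d}}\cong\Ql$, Frobenius reciprocity reducing to $\Sigma_k\times\Sigma_{2d-k}$-invariants) are all fine.

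The weak point is the phrase ``standard equivariant descent.'' The quotient maps $q^{(k)}:(Y/\Asm)^k\to\Sym^k(Y/\Asm)$ and $s_{k,2d-k}$ are finite but \emph{not \'etale}; they ramify along diagonals. For a $G$-equivariant local system $\calF$ on $W$ with a finite but ramified quotient $q:W\to W/G$, the complex $(q_*\calF)^G$ is in general \emph{not} a local system on $W/G$, so one cannot simply ``descend'' and then recognize the answer as $L^Y_k\boxtimes\Ql$. To close the gap you have two options. (a) Note that $L_Y$ has rank one, so at a point $(y_1,\dots,y_k)$ with repetitions, the stabilizer in $\Sigma_k$ acts on the stalk $\bigotimes_j L_{Y,y_j}$ by permuting equal one-dimensional tensor factors, hence trivially; then $(q^{(k)}_*L_Y^{\boxtimes k})^{\Sigma_k}$ is a local system and is identified with $L^Y_k$ by comparing pullbacks via Lemma \ref{l:L}(2). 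This is a genuine alternative to the paper's route. (b) Observe that both $(q^{(k)}_*L_Y^{\boxtimes k})^{\Sigma_k}$ and $L^Y_k$ are middle extensions (the former being a direct summand of a finite pushforward from a smooth source) and agree over the multiplicity-free locus where $q^{(k)}$ is a $\Sigma_k$-torsor --- but this is precisely the paper's smallness/middle-extension argument in disguise: the paper first notes both sides of \eqref{eq:bin} are middle extensions (using Lemma \ref{l:inc}) and then runs essentially your K\"unneth computation only over the multiplicity-free open $U$, where descent is genuinely Galois. In short, your proposal is correct in outline and, if you supply the rank-one stabilizer observation, it is a clean variant; without it, the missing justification would silently reproduce the paper's middle-extension reduction.
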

\begin{proof}
Since the morphism $\tau':I^{d-i,2d}\to\Sym^{2d}(Y/\Asm)$ is finite and $I^{d-i,2d}$ is smooth over $k$ by Lemma \ref{l:inc}, we conclude that $K^Y_{d-i}$, being the direct image of a local system under $\tau'$, is a middle extension on $\Sym^{2d}(Y/\Asm)$. Similarly, $\pi^{Y}_{2d,*}\Ql$ is also a middle extension on $\Sym^{2d}(Y/\Asm)$. Therefore, to establish the isomorphism (\ref{eq:bin}), it suffices to establish such a natural isomorphism over a dense open subset of $\Sym^{2d}(Y/\Asm)$.

Now we consider the dense open subset $U\subset\Sym^{2d}(Y/\Asm)$ consisting of those divisors which are multiplicity-free. Let $U'$ (resp. $\tilU$, resp. $\tilU'$) be the preimage of $U$ in $\Sym^{2d}(Y'/\Asm)$ (resp. $(Y/\Asm)^{2d}$, resp. $(Y'/\Asm)^{2d}$). Then $s_{2d}:\tilU\to U$ is an \'etale Galois cover with Galois group $\Sigma_{2d}$, and we have a Cartesian diagram
\begin{equation*}
\xymatrix{\tilU'\ar[r]^{s'_{2d}}\ar[d]^{(\pi^Y)^{2d}} & U'\ar[d]^{\pi^Y_{2d}}\\
\tilU\ar[r]^{s_{2d}} & U}
\end{equation*}
Therefore we have
\begin{equation*}
(s_{2d}^*\pi^Y_{2d,*}\Ql)|_{\tilU}\cong((\pi^Y)^{2d}_*\Ql)|_{\tilU}=(\Ql\oplus L^Y)^{\boxtimes2d}|_{\tilU}.
\end{equation*}
where $L^Y$ is the pull-back of $L$ to $Y$.

On the other hand, we have a Cartesian diagram
\begin{equation*}
\xymatrix{\coprod_{J\subset\{1,2,\cdots,2d\},\#J=d-i}{\tilU_J}\ar[d]\ar[r] & I^{d-i,2d}|_{U}\ar[d]^{\tau'}\\
\tilU\ar[r]^{s_{2d}} & U}
\end{equation*}
where for each $J\subset\{1,2,\cdots,2d\}$, $\tilU_J\subset\Sym^{d-i}(Y/\Asm)\times\tilU$ is the graph of the morphism $\tau_J:(y_1,\cdots,y_{2d})\mapsto\sum_{j\in J}y_j$. Therefore we have
\begin{eqnarray*}
(s_{2d}^*K^Y_{d-i})|_{\tilU}\cong (s_{2d}^*\tau'_*\tau^*L^X_{d-i})|_{\tilU}\cong\bigoplus_{J\subset\{1,2,\cdots,2d\},\#J=d-i}\tau^*_JL^Y_{d-i}\\
\end{eqnarray*}
For each $J\subset\{1,2,\cdots,2d\}$ of cardinality $d-i$, let $p_J:\tilU\to(Y/\Asm)^J$ be the projection to those coordinates indexed by $J$. Then we have a factorization
\begin{equation*}
\tau_J:\tilU\xrightarrow{p_J}(Y/\Asm)^{J}\xrightarrow{s_J}\Sym^{d-i}(Y/\Asm).
\end{equation*}
By Lemma \ref{l:L}, we have $s_J^*L^Y_{d-i}\cong L^{Y,\boxtimes(d-i)}$. Therefore 
\begin{equation*}
\tau^*_JL^Y_{d-i}\cong p_J^*s_J^*L^Y_{d-i}\cong p_J^*((L^Y)^{\boxtimes(d-i)}).
\end{equation*}
Finally, we have an $\Sigma_{2d}$-equivariant isomorphism of local systems on $\tilU$:
\begin{eqnarray*}
\bigoplus_{i=-d}^{d}(s_{2d}^*K^Y_{d-i})|_{\tilU}\cong\bigoplus_{J\subset\{1,2,\cdots,2d\}}p_J^*((L^Y)^{\boxtimes\#J})\cong(\Ql\oplus L^Y)^{\boxtimes2d}|_{\tilU}.
\end{eqnarray*}
The last isomorphism justifies the nickname ``binomial expansion'' of this lemma. This $\Sigma_{2d}$-equivariant isomorphism descends to an isomorphism
\begin{equation*}
\bigoplus_{i=-d}^{d}K^Y_{d-i}|_{U}\cong(\pi^Y_{2d,*}\Ql)|_U
\end{equation*}
which proves the lemma.
\end{proof}

\subsection{A decomposition of $\gh_*\Ql$}
To state the next result, we need to define a technical notion.
\begin{defn}\label{def:pt} A commutative diagram of schemes
\begin{equation}\label{d:geomCart}
\xymatrix{X'\ar[r]^{\alpha}\ar[d]^{f'}& X\ar[d]^{f}\\
Y'\ar[r]^{\beta} & Y} 
\end{equation}
is said to be {\em pointwise Cartesian}, if for any algebraically closed field $\Omega$, the corresponding diagram of $\Omega$-points is Cartesian.
\end{defn}

For pointwise Cartesian diagrams with reasonable finiteness conditions, proper base change theorem also holds:

\begin{lemma}\label{l:bc}
Suppose we have a pointwise Cartesian diagram (\ref{d:geomCart}) where all maps are of finite type and $f,f'$ are proper. Let $F\in D^b_c(X,\Ql)$ be a constructible $\Ql$-complex on $X$, then we have a quasi-isomorphism
\begin{equation*}
\beta^*f_*F\cong f'_*\alpha^*F.
\end{equation*}
\end{lemma}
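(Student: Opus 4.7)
The plan is to factor the base change map through the honest fiber product $Z:=X\times_Y Y'$ and to observe that the discrepancy is a universal homeomorphism, hence invisible to étale cohomology. Let $p_1:Z\to X$ and $p_2:Z\to Y'$ be the projections; the universal property of $Z$ yields a canonical $Y'$-morphism $\iota:X'\to Z$ with $p_1\circ\iota=\alpha$ and $p_2\circ\iota=f'$. Since $f$ is proper, so is its base change $p_2$; since $f'$ is proper and $p_2$ is separated, $\iota$ itself is proper.

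Next I would show that $\iota$ is a universal homeomorphism. The pointwise Cartesian hypothesis on the original diagram says exactly that $\iota(\Omega):X'(\Omega)\to Z(\Omega)$ is a bijection for every algebraically closed field $\Omega$. Fix $z\in Z$ and set $\Omega=\overline{k(z)}$: the geometric points of $X'$ lying over the distinguished geometric point $\bar z$ of $Z$ are in bijection with pairs $(x',\phi)$ with $x'\in\iota^{-1}(z)$ and $\phi\in\Hom_{k(z)}(k(x'),\Omega)$, so the assumed bijectivity forces $\iota^{-1}(z)$ to consist of a single point $x'$ and the residue field extension $k(z)\hookrightarrow k(x')$ to be purely inseparable. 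Thus $\iota$ is radical, surjective, quasi-finite, and proper; being proper and quasi-finite it is finite, and hence a universal homeomorphism.

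Now I apply the standard proper base change theorem to the genuine Cartesian square
\[
\xymatrix{Z\ar[r]^{p_1}\ar[d]_{p_2}&X\ar[d]^{f}\\ Y'\ar[r]^{\beta}&Y}
\]
(legitimate because $f$, hence $p_2$, is proper) to obtain a quasi-isomorphism $\beta^*f_*F\cong p_{2,*}p_1^*F$. Because $\iota$ is a universal homeomorphism, the adjunction $(\iota^*,\iota_*)$ is an equivalence of étale topoi; in particular $\iota_*\iota^*G\cong G$ for every $G\in D^b_c(Z,\Ql)$. Taking $G=p_1^*F$ yields
\[
p_{2,*}p_1^*F\cong p_{2,*}\iota_*\iota^*p_1^*F=(p_2\circ\iota)_*(p_1\circ\iota)^*F=f'_*\alpha^*F,
\]
and composing with the proper base change isomorphism above proves the lemma; a routine check identifies the composite with the canonical adjunction-theoretic base change map.

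The only step requiring thought is the verification that $\iota$ is a universal homeomorphism; the remainder is routine invocation of standard theorems. The conceptual point is that ``pointwise Cartesian'' differs from ``Cartesian'' at worst by a purely inseparable thickening of a geometric fiber product, and such thickenings are invisible to the étale topology.
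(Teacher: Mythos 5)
Your proposal is correct and follows essentially the same route as the paper: both factor through the honest fiber product $Z = X\times_Y Y'$, apply ordinary proper base change to the Cartesian square, and then show the canonical comparison map $\iota: X'\to Z$ (the paper's $\xi$) is cohomologically invisible. The only divergence is in that last step: you establish the stronger structural fact that $\iota$ is a universal homeomorphism and invoke topological invariance of the \'etale topos, whereas the paper makes the more minimal check that $\Ql\to\iota_*\Ql$ is a quasi-isomorphism on stalks (the reduced geometric fibers of $\iota$ are single points, by pointwise Cartesianness plus properness) and concludes via the projection formula.
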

\begin{proof}
Let $X''=Y'\times_YX$ and $f'':X''\to Y'$ be the projection. Let $\xi:X'\to X''$ be the natural map over $Y'$, which is also proper. By the usual proper base change for Cartesian diagrams, we reduce to showing that for any $G\in D^b_c(X'',\Ql)$,
\begin{equation}\label{eq:G}
f''_!G\cong f'_!\xi^*G.
\end{equation}
But we have
\begin{equation*}
f'_*\xi^*G=f''_!\xi_*\xi^*G\cong f''_*(G\otimes\xi_*\Ql).
\end{equation*}
Therefore to show (\ref{eq:G}), it suffices to show that the natural map $\iota:\Ql\to\xi_*\Ql$ is a quasi-isomorphism. Since both $\xi_*\Ql$ is constructible, it suffices to show that $\iota$ is an isomorphism on the stalks of every geometry point $x''\in X''(\Omega)$, i.e.,
\begin{equation}\label{eq:pt}
\iota_{x''}:\Ql\to H^*(\xi^{-1}(x''),\Ql)
\end{equation}
is an isomorphism. By Definition \ref{def:pt}, $\xi^{-1}(x'')(\Omega)$ is a singleton. Therefore, the reduced structure of $\xi^{-1}(x'')$ is $\Spec\Omega$, and (\ref{eq:pt}) obviously holds.
\end{proof}

Consider the norm map
\begin{equation*}
\frN_{Y'/\Ah}:\Quot^{2d}(\omega(D_0)/Y'/\Ah)\to\Sym^{2d}(Y'/\Ah).
\end{equation*}

\begin{lemma}\label{l:tCartN}
The diagram
\begin{equation*}
\xymatrix{\Nsm\ar[d]^{\gsm}\ar[rr]^(.4){\frN_{Y'/\Asm}\circ r_\beta'} & & \Sym^{2d}(Y'/\Asm)\ar[d]^{\pi^Y_{2d}}\\
\Asm\times\Bh\ar[rr]^{\div} & & \Sym^{2d}(Y/\Asm)}
\end{equation*}
is pointwise Cartesian.
\end{lemma}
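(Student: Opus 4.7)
The plan is to check pointwise Cartesianness by exhibiting a bijection between $\Omega$-points for any algebraically closed field $\Omega$, working geometric-fibre-wise over $a\in\Asm(\Omega)$. Since $Y'_a$ is smooth and integral over $\Omega$, every $\calF'$ entering an $\Omega$-point of $\Nsm$ is a line bundle and $\beta':\calF'\hookrightarrow \omega_{Y'_a/X}(D_0)$ has a well-defined effective divisor $D':=\div(\beta')$ of degree $2d$. The $\Omega$-points of the fibre product are pairs $((a,b),D')$ with $D'\in\Sym^{2d}(Y'_a)(\Omega)$ and $\pi_{a,*}D'=\div(\gamma_{a,b})$.

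I first verify commutativity. With $\alpha':=h^{-1}\circ\sigma^*\beta'^\vee$ one has $\div(\alpha')=\sigma^*D'$, so $\gamma':=\beta'\circ\alpha'$ has divisor $D'+\sigma^*D'$. By the discussion after Lemma \ref{l:modpicU}, $\gamma'=\pi_a^*\gamma_{a,b}$, hence $D'+\sigma^*D'=\pi_a^*\div(\gamma_{a,b})$. Applying $\pi_{a,*}$, using that $\pi_a$ is \'etale of degree two, gives $2\pi_{a,*}D'=2\div(\gamma_{a,b})$, i.e.\ $\pi_{a,*}D'=\div(\gamma_{a,b})$, which is precisely the required compatibility.

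For bijectivity I reconstruct the triple from the pair. Given $((a,b),D')$ as above, set $\calF':=\omega_{Y'_a/X}(D_0)(-D')$ with $\beta'$ the tautological inclusion, so automatically $\div(\beta')=D'$. The isomorphism $\omega_{Y_a/X}(2D_0)\cong\calO_{Y_a}(\div\gamma_{a,b})$ pulls back under $\pi_a$ to $\omega_{Y'_a/X}(2D_0)\cong\calO_{Y'_a}(D'+\sigma^*D')$, which rearranges to $\calF'\otimes\sigma^*\calF'\cong\omega_{Y'_a/X}$, furnishing a one-dimensional $\Omega$-space of candidate isomorphisms $h:\calF'\isom\sigma^*\calF'^\vee$. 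The scalar is uniquely pinned down by the equation $\beta'\circ h^{-1}\circ\sigma^*\beta'^\vee=\pi_a^*\gamma_{a,b}$, which is an equality of nonzero rational sections of the same line bundle with identical divisor $D'+\sigma^*D'$. Injectivity on $\Omega$-points is immediate from these explicit formulas, since $(\calF',\beta')$ is rigidified once $D'$ is fixed.

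The step I expect to be the main obstacle is verifying the Hermitian compatibility $\sigma^*h^\vee=h$ for the reconstructed $h$. I plan to reduce it to the self-duality $\sigma^*\gamma'^\vee=\gamma'$ of the pulled-back map, which holds because $\gamma'=\pi_a^*\gamma_{a,b}$ is $\sigma$-invariant by construction and $\gamma_{a,b}$ is self-dual under Grothendieck--Serre duality (its cokernel is a length-$2d$ torsion sheaf whose divisor is symmetric under the $\Hom(\calO(-D_0),\omega(D_0))\cong\Hom(\omega(D_0)^\vee,\calO(-D_0)^\vee)$ identification). Taking duals in the characterizing equation for $h$ and applying this symmetry shows that $\sigma^*h^\vee$ satisfies exactly the same uniqueness equation as $h$, so by one-dimensionality they coincide, completing the proof.
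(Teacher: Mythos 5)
Your proof is correct and follows essentially the same approach as the paper: both arguments fix a geometric point $(a,b)\in\Asm(\Omega)\times\Bh(\Omega)$ and establish a bijection between triples $(\calF',h,\beta')$ and divisors $D'\in\Sym^{2d}(Y'_a)(\Omega)$ with $\pi_{a,*}D'=\div(\gamma_{a,b})$, the paper by tracking multiplicities $m'_t+m''_t=m_t$ at preimages of singular points of $\gamma_{a,b}$ and you by reconstructing $(\calF',h,\beta')$ abstractly from $D'$. One small note: the justification of $\gamma_{a,b}^\vee=\gamma_{a,b}$ via ``symmetry of the divisor of $\coker\gamma_{a,b}$'' only shows they agree up to scalar; the clean reason is that $\gamma_{a,b}$ is \'etale-locally the map $R_a\to R_a^\vee$, $x\mapsto(y\mapsto b'(xy))$, which is its own transpose because the pairing $b'(xy)$ is symmetric.
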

\begin{proof}
Since $\Nsm$ is reduced, to check the commutativity of the diagram, it suffices to check on geometric points. Therefore, we fix a geometric point $(a,b)\in\Asm(\Omega)\times\Bh(\Omega)$, and prove that the diagram is commutative and pointwise Cartesian at the same time. Again we omit $\Omega$ in the sequel. 

Let $\div(a,b)=\sum_{t=1}^rm_ty_t$ with $\{y_t\}$ distinct points on $Y_a$. Recall $\pi_a:Y_a'\to Y_a$ is the \'etale double cover induced from $\pi:X'\to X$. Let $\pi_a^{-1}(y_t)=\{y_t',y_t''\}$. Then a point $\calO_{Y_a'}(-D_0)\subset\calF'\subset\omega_{Y_a'/X}(D_0)$ is determined by the torsion sheaf $\calQ'=\omega_{Y_a'/X}(D_0)/\calF'$, which is a quotient of $\omega_{Y_a'/X}(D_0)/\calO_{Y_a'}(-D_0)$. Since $Y_a'$ is smooth, $\calQ'$ is in turn determined by its divisor $\sum_{t=1}^rm'_ty'_t+m_t''y_t''\leq\pi_a^{-1}(\div(a,b))$. The line bundle $\calF'$ is self-dual if and only if $m_t'+m_t''=m_t$ (cf. the proof of Lemma \ref{l:DVR}). Therefore, the image of the divisor of $\calQ$ is $\div(a,b)$. This proves the commutativity of the diagram. But this also shows that the map $\calN_{a,b}\to\pi^{Y,-1}_{2d}(\div(a,b))$ is a bijection on $\Omega$-points. This completes the proof.
\end{proof}

\begin{lemma} Fix $\delta\geq1,\deg(D)\geq c_\delta$ and $n(\deg(D_0)-g+1)\geq\delta+g_Y$. Then
\begin{enumerate}
\item For each $-d\leq i\leq d$, $\div^*K^Y_{d-i}[\dim\calA+\dim\calB]$ is a perverse sheaf.
\item We define
\begin{equation*}
K_{d-i}:=\jsm_{!*}(\div^*K^Y_{d-i}[\dim\calA+\dim\calB])[-\dim\calA-\dim\calB]
\end{equation*}
where $\jsm:\Asm\times\Bh\hookrightarrow\Adel\times\Bh$ is the open inclusion. Then we have a natural decomposition
\begin{equation*}
\gdel_*\Ql\cong\bigoplus_{i=-d}^{d}K_{d-i}.
\end{equation*}
\end{enumerate}
\end{lemma}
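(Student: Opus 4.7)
The plan is to establish the decomposition first over the dense open locus $\Asm\times\Bh\subset\Adel\times\Bh$ where the spectral curves are smooth, and then extend to $\Adel\times\Bh$ via middle extension, using the smallness results already proved.

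Over $\Asm\times\Bh$, the pointwise Cartesian diagram of Lemma \ref{l:tCartN} combined with Lemma \ref{l:bc} (proper base change for pointwise Cartesian diagrams) gives a canonical isomorphism
$$\gsm_*\Ql\cong\div^*\pi^Y_{2d,*}\Ql.$$
Substituting the binomial expansion of Lemma \ref{l:bin} yields the decomposition
$$\gsm_*\Ql\cong\bigoplus_{i=-d}^{d}\div^*K^Y_{d-i}\qquad\text{on }\Asm\times\Bh.$$

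For part (1), I would deduce perversity from smallness. By Proposition \ref{p:smU} the source $\Nh$ is smooth; by Proposition \ref{p:small} the restriction $\gsm$ of $\gdel$ to $\Asm\times\Bh$ is small; and by the internal claim inside the proof of Proposition \ref{p:small}, every irreducible component of a fiber $\Nh_a$ maps dominantly and generically finitely onto $\Bh$. Combining these, $\Nsm$ is smooth and equidimensional of dimension $\dim\calA+\dim\calB$. Therefore $\gsm_*\Ql[\dim\calA+\dim\calB]$ is a perverse middle extension on $\Asm\times\Bh$. Since direct summands of perverse sheaves are perverse, each summand $\div^*K^Y_{d-i}[\dim\calA+\dim\calB]$ is itself perverse, proving (1).

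For part (2), the hypothesis $\deg(D)\geq c_\delta$ combined with the codimension estimate \eqref{eq:codim} guarantees that $\Asm\times\Bh$ is a smooth open dense subset of $\Adel\times\Bh$. Smallness of $\gdel$ together with smoothness and equidimensionality of $\Ndel$ (extending the argument above to all of $\Adel$) implies that $\gdel_*\Ql[\dim\calA+\dim\calB]$ is a middle extension on $\Adel\times\Bh$, so
$$\gdel_*\Ql\cong\jsm_{!*}\bigl(\gsm_*\Ql[\dim\calA+\dim\calB]\bigr)[-\dim\calA-\dim\calB].$$
Substituting the decomposition from the first paragraph and commuting $\jsm_{!*}$ with the finite direct sum of perverse sheaves yields
$$\gdel_*\Ql\cong\bigoplus_{i=-d}^{d}\jsm_{!*}\bigl(\div^*K^Y_{d-i}[\dim\calA+\dim\calB]\bigr)[-\dim\calA-\dim\calB]=\bigoplus_{i=-d}^{d}K_{d-i}.$$

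The main obstacle is the equidimensionality of $\Nsm$ (and $\Ndel$), since Proposition \ref{p:smU} records only smoothness and properness. I would extract equidimensionality from the dominance-and-generic-finiteness claim established within the proof of Proposition \ref{p:small}, which forces every irreducible component of each fiber of $\gh$ to have the same dimension $\dim\Bh$; combined with smoothness of the total space this yields global equidimensionality. A second, more routine, subtlety is verifying that $\jsm_{!*}$ commutes with the finite direct sum decomposition on $\Asm\times\Bh$, which follows formally from the uniqueness of middle extensions of perverse sheaves.
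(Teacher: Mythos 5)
Your proof is correct and follows the same backbone as the paper: the combination of Lemma \ref{l:tCartN}, Lemma \ref{l:bc}, and the binomial expansion Lemma \ref{l:bin} to produce the decomposition over $\Asm\times\Bh$, followed by a middle-extension argument via smallness to extend to $\Adel\times\Bh$. The one place where you take a genuinely different route is part (1). You invoke the smallness of $\gsm$ and then worry, rightly, about equidimensionality. The paper argues more directly: over $\Asm$ the spectral curves $Y_a$ are \emph{smooth}, so the local rings $R_{a_x}$ are products of DVRs, and the product formula (Proposition \ref{p:pdU}) shows the fibers $\calN_{a,b}$ are zero-dimensional. Combined with properness this means $\gsm$ is \emph{finite}, not merely small; finiteness plus smoothness of $\Nsm$ immediately gives perversity of $\gsm_*\Ql[\dim\calA+\dim\calB]$ without detouring through middle-extension considerations on $\Asm\times\Bh$. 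Your route works, but the paper's observation that $\gsm$ is finite is cleaner and sidesteps the equidimensionality discussion for this step.

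On the equidimensionality point itself: your extraction of it from the dominance-and-generic-finiteness claim inside Proposition \ref{p:small} is on the right track but slightly misstated. The claim as written concerns irreducible components of the \emph{fibers} $\Nh_a$; what you actually need is that every irreducible component of the \emph{total space} $\Nh$ dominates $\Ah\times\Bh$ generically finitely, which the paper derives immediately after the claim by applying it at geometric generic points of $\calA$. That corrected statement, together with smoothness from Proposition \ref{p:smU}, gives the global equidimensionality of $\Ndel$ that your part (2) needs, and your commutation of $\jsm_{!*}$ with the finite direct sum of perverse sheaves then finishes the argument as intended.
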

\begin{proof}
(1) By Lemma \ref{l:tCartN} and Lemma \ref{l:bc}, we have
\begin{equation*}
\div^*\pi^Y_{2d,*}\Ql\cong\gsm_*\Ql.
\end{equation*}
Since $\gsm$ is finite and $\Nsm$ is smooth, $\gsm_*\Ql[\dim\calA+\dim\calB]$ is a perverse sheaf. But by Lemma \ref{l:bin}, $K^Y_{d-i}$ is a direct summand of $\pi^Y_{2d,*}\Ql$, hence $\div^*K^Y_{d-i}$ is a direct summand of $\div^*\pi^Y_{2d,*}\Ql\cong\gsm_*\Ql$. Therefore $\div^*K^Y_{d-i}[\dim\calA+\dim\calB]$ is also a perverse sheaf.

(2) follows from the smallness of $\gdel$ proved in Proposition \ref{p:small}.
\end{proof}

\subsection{The global matching theorem}\label{ss:match} 
The global part of the main theorem of the paper is:
\begin{theorem}\label{th:match} Fix $\delta\geq1$,$\deg(D)\geq c_\delta$ and $n(\deg(D_0)-g+1)\geq\delta+g_Y$. Then for $-d\leq i\leq d$, there is a natural isomorphism in $D^b_c(\Adel\times\Bh)$:
\begin{equation}\label{eq:KL}
\fdel_{i,*}L_{d-i}\cong K_{d-i},
\end{equation}
hence an isomorphism
\begin{equation*}
\bigoplus_{i=-d}^{d}\fdel_{i,*}L_{d-i}\cong\gdel_*\Ql.
\end{equation*}
\end{theorem}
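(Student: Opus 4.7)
The plan is to establish \eqref{eq:KL} for each $i$ individually; summing over $i$ and combining with the decomposition $\gdel_*\Ql \cong \bigoplus_{i=-d}^d K_{d-i}$ proved in the preceding lemma then yields the second assertion. The underlying principle is that both sides of \eqref{eq:KL} should be middle extensions from $\Asm\times\Bh$, so one reduces to identifying them over this dense open subset and verifying that each side is a middle extension on $\Adel\times\Bh$.

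First I would carry out the identification over $\Asm\times\Bh$. Restricted to $\Msm_i$, the map $\nu$ from \eqref{eq:nu} factors through $\Sym^{d-i}(Y/\Asm)$ via the projection $r_\beta$, which by Lemma \ref{l:tCartM} coincides with $\tau\circ\widetilde{\div}$. Hence
\[
L_{d-i}|_{\Msm_i} \cong \widetilde{\div}^*\tau^*L^Y_{d-i}.
\]
Since the square \eqref{d:tCartM} is Cartesian and $\tau'$ is finite (hence proper), proper base change gives
\[
\fsm_{i,*}L_{d-i} \cong \div^*\tau'_*\tau^*L^Y_{d-i} = \div^*K^Y_{d-i},
\]
which is exactly $K_{d-i}|_{\Asm\times\Bh}$ by the very definition of $K_{d-i}$ as the intermediate extension of $\div^*K^Y_{d-i}[\dim\calA+\dim\calB]$.

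The second step is to argue that $\fdel_{i,*}L_{d-i}$ is itself a middle extension on $\Adel\times\Bh$, whence an isomorphism over $\Asm\times\Bh$ extends uniquely to the claimed isomorphism on $\Adel\times\Bh$. The smallness of $\fdel_i$ (Proposition \ref{p:small}) combined with the smoothness and equidimensionality of $\Mh_i$ supplies this through Notation \ref{n:small}, since $L_{d-i}$ is a rank-one local system on a smooth equidimensional scheme.

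The main obstacle is that Proposition \ref{p:smGL} guarantees smoothness of $\Mh_i$ only in the non-extremal range $-d+2g_Y-1\le i\le d-2g_Y+1$, so the direct middle-extension argument fails for the extremal values of $i$. To handle these indices, I would enlarge the parameter $D_0$ to a sufficiently large effective divisor $D_0'\ge D_0$ so that the enlarged degree bound $d' = n(n-1)\deg(D)/2+n\deg(D_0')$ puts every $-d\le i\le d$ (for the original $D_0$) into the non-extremal range for the problem defined using $D_0'$. The natural inclusions $\calO_X(-D_0')\hookrightarrow\calO_X(-D_0)$ and $\calO_X(D_0)\hookrightarrow\calO_X(D_0')$ induce a comparison between the moduli spaces built from $D_0$ and $D_0'$, compatible with the Hitchin maps, with the local systems $L_{d-i}$, and with the definition of $K_{d-i}$ over the respective Hitchin bases. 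Transferring the isomorphism established at the level of $D_0'$ back down to $D_0$ then completes the proof; the careful bookkeeping of how the structures on $\Asm\times\Bh$, the Serre stratification, and the middle-extension property behave under this change of parameters is the technical crux of the argument and is what the paper announces in \S\ref{ss:match}.
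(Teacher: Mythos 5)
Your first two steps match the paper's Lemma~\ref{l:easymatch} precisely: the identification $\fsm_{i,*}L_{d-i}\cong\div^*K^Y_{d-i}$ over $\Asm\times\Bh$ via the Cartesian diagram~\eqref{d:tCartM} and proper base change, and the middle-extension conclusion from smallness plus smoothness of $\Mh_i$ for $i$ in the non-extremal range. Your overall strategy for the extremal indices (enlarge $D_0$ so every $i$ becomes non-extremal) is also the strategy the paper announces. But the way you propose to execute it---a ``natural comparison'' of moduli spaces induced by the sheaf inclusions $\calO_X(-D_0')\hookrightarrow\calO_X(-D_0)$ and $\calO_X(D_0)\hookrightarrow\calO_X(D_0')$---does not supply the content needed, and glossing it as ``careful bookkeeping'' hides where the argument actually lives.

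The inclusion-induced morphism $\Mh_{D_0,i}\to\Mh_{D_0',i}$ (postcompose $\alpha,\beta$) exists, but it is far from an isomorphism, and there is no way to globally ``transfer'' the isomorphism from $D_0'$ back to $D_0$ along it: the fiber $\calM_{D_0',i,a,b}$ over $(a,b)\in\Ah\times\calB_{D_0}$ allows strictly more lattices than $\calM_{D_0,i,a,b}$. The paper's actual mechanism is different and local in nature. One chooses $D_0'=D_N=D_0+N\infty$ where $\infty$ is a closed point \emph{split} in $X'$, and restricts to the open locus $(\Ah\times\Bh)^\infty$ where $\gamma_{a,b}$ is an isomorphism over $\infty$. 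Over this open set, $\Mh_{D_N,j}$ decomposes (Cartesian diagram~\eqref{d:DNM}) as a disjoint union of fiber products of $\Minf_{D_0,i}$ with Hilbert schemes $\Hilb^{i_\infty}(2Np^{-1}(\infty)/\Ainf)$---a relative version of the product formula~\eqref{eq:pdGL}. The same is done on the $\Nh$ side using the split point $\infty'$ over $\infty$ (diagram~\eqref{d:DNN}), which is the step requiring splitness (via Lemma~\ref{l:locisom}). Splitness is also needed so that $(+i_\infty\infty)^*L^X_{d_N-j}\cong L^X_{d-i}$, i.e.\ the rank-one local system does not change. The decisive technical input is then the Claim that $h_{i_\infty,*}\Ql$ and $h'_{i_\infty,*}\Ql$ are \emph{local systems} on $\Ainf$: this is what lets the middle-extension property of $\iota_N^*\fdel_{D_N,i,*}L_{d_N-i}\cong\iota_N^*K_{D_N,d_N-i}$ (already known from the non-extremal case at level $D_N$) descend to the direct summand $\finfdel_{D_0,i,*}L_{d-i}\boxtimes h_{Nn,*}\Ql$ and hence to $\finfdel_{D_0,i,*}L_{d-i}$ itself. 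Finally, letting $\infty$ range over split points produces an open cover $\{(\Adel\times\Bh)^\infty\}$ of $\Adel\times\Bh$, so the middle-extension property holds globally, at which point your step~1 argument applies. None of these ingredients---the choice of split $\infty$, the restriction to the open cover, the relative product formula, and the local-system claim for the Hilbert schemes---is recoverable from the inclusion-induced map you describe, and without them your step~3 does not close.
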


The proof of the theorem will occupy the rest of the section. We first prove a relatively easy case of the theorem.

\begin{lemma}\label{l:easymatch}
Theorem \ref{th:match} holds if $-d+2g_Y-1\leq i\leq d-2g_Y+1$.
\end{lemma}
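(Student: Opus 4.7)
The plan is to show that both sides of \eqref{eq:KL} are middle extensions of the same object from the smooth locus $\Asm\times\Bh$, and then identify their restrictions there using the Cartesian square of Lemma \ref{l:tCartM}.

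First I would verify that $\fdel_{i,*}L_{d-i}$ is a middle extension on $\Adel\times\Bh$. For $i$ in the given range, Proposition \ref{p:smGL} guarantees that $\Mh_i$ is smooth and equidimensional over $k$, and Proposition \ref{p:small} guarantees that $\fdel_i$ is small and proper. Since $L_{d-i}$ is a rank-one local system on $\Mh_i$, the complex $L_{d-i}[\dim\Mh_i]$ is perverse on $\Mh_i$; by the smallness property recalled in Notation \ref{n:small} (applied to the local system $L_{d-i}$, not merely to $\Ql$), the pushforward $\fdel_{i,*}L_{d-i}$ is a middle extension from any sufficiently small open of $\Adel\times\Bh$ over which $\fdel_i$ is finite étale. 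On the other hand, $K_{d-i}$ is, by its very definition, the middle extension $\jsm_{!*}(\div^*K^Y_{d-i}[\dim\calA+\dim\calB])[-\dim\calA-\dim\calB]$ from $\Asm\times\Bh$.

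Since two middle extensions agree globally as soon as their restrictions to a common dense open subset agree, it suffices to produce a natural isomorphism $\fsm_{i,*}L_{d-i}\cong\div^*K^Y_{d-i}$ on $\Asm\times\Bh$. Here I would invoke Lemma \ref{l:tCartM}: the square
\[
\xymatrix{\Msm_i\ar[r]^{\widetilde{\div}}\ar[d]^{\fsm_i} & I^{d-i,2d}\ar[d]^{\tau'}\\
\Asm\times\Bh\ar[r]^{\div} & \Sym^{2d}(Y/\Asm)}
\]
is Cartesian, so by proper base change (noting $\tau'$ is finite by the description of $I^{d-i,2d}$, or proper in any case, and $\fsm_i$ is proper) we obtain
\[
\div^*K^Y_{d-i}=\div^*\tau'_*\tau^*L^Y_{d-i}\cong\fsm_{i,*}\,\widetilde{\div}^*\tau^*L^Y_{d-i}.
\]

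It remains to identify $\widetilde{\div}^*\tau^*L^Y_{d-i}$ with $L_{d-i}|_{\Msm_i}$. Over $\Asm$ the norm map $\frN_{Y/\Asm}$ is an isomorphism, so the composition $\tau\circ\widetilde{\div}:\Msm_i\to\Sym^{d-i}(Y/\Asm)$ sends $(\calF,\alpha,\beta)$ to the effective divisor $\div(\beta)$ on $Y_a$; this is precisely the factorization of $\nu|_{\Msm_i}$ through $\Sym^{d-i}(Y/\Asm)$ in the definition \eqref{eq:nu} of $\nu$. Since $L^Y_{d-i}$ is by construction the pullback of $L^X_{d-i}$ along $\Sym^{d-i}(Y/\Asm)\to\Sym^{d-i}(X/k)$, we conclude
\[
\widetilde{\div}^*\tau^*L^Y_{d-i}\cong\nu^*L^X_{d-i}|_{\Msm_i}=L_{d-i}|_{\Msm_i},
\]
which combined with the previous display gives the desired isomorphism on $\Asm\times\Bh$ and hence on all of $\Adel\times\Bh$ by the middle extension argument.

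The only potentially delicate point is the verification that $\fdel_{i,*}L_{d-i}$ is genuinely a middle extension and not merely a perverse sheaf with some extension components; this rests on the fact that smallness of a morphism from a smooth, equidimensional source preserves the middle extension property even for nontrivial coefficient local systems, since the stalk-dimension estimate \eqref{eq:cosm} is a statement about fibers and is independent of the coefficients pushed forward. All other steps are formal manipulations with proper base change and the uniqueness of middle extensions.
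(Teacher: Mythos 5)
Your argument is correct and follows exactly the same route as the paper's own proof: for $i$ in the given range, Propositions \ref{p:smGL} and \ref{p:small} make $\fdel_{i,*}L_{d-i}$ a middle extension (as is $K_{d-i}$ by definition), so it suffices to compare on $\Asm\times\Bh$, where proper base change along the Cartesian square of Lemma \ref{l:tCartM} together with the factorization of $\nu$ gives $\fsm_{i,*}L_{d-i}\cong\div^*K^Y_{d-i}$. Your side remark about the smallness criterion applying to pushforwards of nontrivial local systems is a correct (and standard) point that the paper leaves implicit.
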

\begin{proof}
For $\delta\geq1$,$\deg(D)\geq c_\delta$, $n(\deg(D_0)-g+1)>\delta+g_Y$ and $i$ in the above range, by Proposition \ref{p:smGL} and \ref{p:smU}, $\Mh_i$ and $\Nh$ are smooth. Moreover, by Proposition \ref{p:small}, the morphisms $\fdel_i$ are small. Therefore both $\fdel_{i,*}L_{d-i}$ and $K_{d-i}$ are middle extensions on $\Adel\times\Bh$. Hence it suffices to establish the isomorphism (\ref{eq:KL}) on the dense open subset $\Asm\times\Bh\subset\Adel\times\Bh$.

The morphism $\nu$ (see (\ref{eq:nu})) restricted on $\Msm_i$ factors as:
\begin{equation*}
\nu^{\textup{sm}}:\Msm_i\xrightarrow{\widetilde{\div}}I^{d-i,2d}\xrightarrow{\tau}\Sym^{d-i}(Y/\Asm)\to\Sym^{d-i}(X/k).
\end{equation*}
Therefore, we have
\begin{equation*}
L_{d-i}|_{\Msm_i}=\nu^{\textup{sm},*}L^X_{d-i}\cong\widetilde{\div}^*\tau^*L^Y_{d-i}. 
\end{equation*}
Applying proper base change to the Cartesian diagram (\ref{d:tCartM}), we get
\begin{eqnarray*}
\fsm_{i,*}L_{d-i}\cong\fsm_{i,*}\widetilde{\div}^*\tau^*L^Y_{d-i}&\cong&\div^*\tau'_*\tau^*L^Y_{d-i}\\
&\cong&\div^*K^Y_{d-i}=K_{d-i}|_{\Asm\times\Bh}.
\end{eqnarray*}
\end{proof}

Now we deal with the general case. Choose a closed point $\infty\in|X|$, let $\Ainf$ be the open subset of $\Ah$ where $p_a:Y_a\to X$ is \'etale over $\infty$; let $(\Ah\times\Bh)^{\infty}$ be the open subset of $\Ainf\times\Bh$ consisting of points $(a,b)$ such that $\gamma_{a,b}$ is an isomorphism over $\infty$ (or equivalently, $\div(a,b)$ avoids $\infty$). If we vary $\infty\in|X|$ (even if we are restricted to those $\infty$ which are split in $X'$), the open sets $(\Ah\times\Bh)^{\infty}$ obviously cover $\Ah\times\Bh$. Now the theorem reduces to the following Proposition. In fact, by this Proposition, for all $-d\leq i\leq d$, $\fdel_{i,*}L_{d-i}|_{(\Adel\times\Bh)^\infty}$ is then a middle extension on $(\Adel\times\Bh)^\infty$. Since this is true for any split $\infty$, we conclude that $\fdel_{i,*}L_{d-i}$ is a middle extension on $\Adel\times\Bh$. Then we can apply the argument of Lemma \ref{l:easymatch} to finish the proof of Theorem \ref{th:match}.

\begin{prop}
Suppose $\infty$ is split in $X'$. Then for each $-d\leq i\leq d$, the complex $\fdel_{i,*}L_{d-i}|_{(\Adel\times\Bh)^{\infty}}$ is a middle extension on $(\Adel\times\Bh)^{\infty}$.
\end{prop}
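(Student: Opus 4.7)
The plan is to enlarge $D_0$ so that the extremal index $i$ becomes non-extremal for the new divisor, as hinted at in the introduction. Set $D_0^{\textup{new}} := D_0 + m\infty$ and $d^{\textup{new}} := d + mn\deg(\infty)$, with $m$ large enough that $-d^{\textup{new}} + 2g_Y - 1 \leq i \leq d^{\textup{new}} - 2g_Y + 1$, $\deg(D) \geq c_\delta$, and $n(\deg(D_0^{\textup{new}}) - g + 1) \geq \delta + g_Y$. Let $\calM^{\textup{new}}_i$, $f^{\textup{new}}_i$, $\calB^{\textup{new}}$, $L^{\textup{new}}_{d^{\textup{new}}-i}$ denote the corresponding objects built as in \S\ref{ss:mods}--\ref{ss:lsonM} with $D_0^{\textup{new}}$ in place of $D_0$. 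By Propositions \ref{p:smGL} and \ref{p:small}, $\calM^{\textup{new}}_i$ is smooth over $k$ and the restricted Hitchin map is small over $\Adel \times (\calB^{\textup{new}} \setminus 0)$; the argument of Lemma \ref{l:easymatch} then shows that $(f^{\textup{new}}_i)_* L^{\textup{new}}_{d^{\textup{new}}-i}$ is a middle extension on $\Adel \times (\calB^{\textup{new}} \setminus 0)$.

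The next task is to compare the new moduli over an open subscheme of $\Adel \times (\calB^{\textup{new}} \setminus 0)$ to the original $\Mh_i$ over $(\Adel \times \Bh)^\infty$. Fix $h_0 \in H^0(X, \calO(2m\infty))$ with exact pole of order $2m$ at $\infty$, which exists by Riemann--Roch for $m$ large. Multiplication by $h_0$ gives an inclusion $\Bh \hookrightarrow \calB^{\textup{new}} \setminus 0$. Let $\mathcal{V} \subset \Adel \times (\calB^{\textup{new}} \setminus 0)$ be the open subscheme of $(a, b^{\textup{new}})$ for which the new pairing $\gamma^{\textup{new}}_{a, b^{\textup{new}}}$ has maximal pole at every geometric point of $Y_a$ over $\infty$. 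The forgetful projection $q : \mathcal{V} \to (\Adel \times \Bh)^\infty$ extracting the regular-at-$\infty$ part of $b^{\textup{new}}$ is smooth (essentially a linear projection). Applying the product formula (Proposition \ref{p:pdGL}) fiberwise --- the local factor at $\infty$ is trivial over $(\Adel \times \Bh)^\infty$ in the original setup but a nontrivial lattice-moduli scheme over $\mathcal{V}$ in the new setup, while at points away from $\infty$ the local factors coincide --- one obtains an isomorphism of schemes over $\mathcal{V}$:
\[
\calM^{\textup{new}}_i\big|_{\mathcal{V}} \;\cong\; \coprod_{j_\infty} \calM^{\infty,\textup{new}}_{j_\infty}(\mathcal{V}) \times_\mathcal{V} q^*\bigl(\Mh_{i(j_\infty)}\big|_{(\Adel \times \Bh)^\infty}\bigr),
\]
where $i(j_\infty) := i + \deg(\infty)(j_\infty - mn) \in [-d, d]$ and $\calM^{\infty,\textup{new}}_{j_\infty}(\mathcal{V})$ is the family of local lattice-moduli at $\infty$.

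The splitting of $\infty$ in $X'$ implies that the rank one local system $L$ is trivial in a neighbourhood of $\infty$; hence in the description of $L^{\textup{new}}_{d^{\textup{new}}-i}$ via the \'etale double cover of $\Sym^{d^{\textup{new}}-i}(X)$, the contribution from divisors concentrated near $\infty$ is trivial, so the pullback of $L^{\textup{new}}_{d^{\textup{new}}-i}$ to each component of the decomposition matches $\Ql \boxtimes L_{d - i(j_\infty)}$. Pushing forward gives
\[
\bigl((f^{\textup{new}}_i)_* L^{\textup{new}}_{d^{\textup{new}}-i}\bigr)\big|_\mathcal{V} \;\cong\; \bigoplus_{j_\infty} H^*\bigl(\calM^{\infty,\textup{new}}_{j_\infty}, \Ql\bigr) \otimes q^*\bigl((\fdel_{i(j_\infty),*} L_{d - i(j_\infty)})\big|_{(\Adel \times \Bh)^\infty}\bigr).
\]
The left side is a middle extension on $\mathcal{V}$ (restriction of a middle extension to an open subset), and hence so is each direct summand. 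Taking the summand $j_\infty = mn$ (so $i(j_\infty) = i$), for which $\calM^{\infty,\textup{new}}_{mn}$ is a nonempty smooth scheme and hence has nonvanishing cohomology, and using that $q$ is smooth so that $q^*$ reflects the middle-extension property up to a shift, we conclude that $\fdel_{i,*} L_{d-i}|_{(\Adel \times \Bh)^\infty}$ is a middle extension. The main obstacle will be promoting the product formula from a fiberwise statement to a scheme-theoretic decomposition uniformly over $\mathcal{V}$ and tracking the local systems consistently through this decomposition; the splitting hypothesis on $\infty$ is essential, without which $L^X_{d^{\textup{new}}-i}$ would acquire monodromy from the $\infty$ contribution and the clean factorization would fail.
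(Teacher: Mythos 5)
Your high-level strategy matches the paper exactly: enlarge $D_0$ along $\infty$ so that the fixed index $i$ becomes non-extremal for the new $d$, apply Lemma \ref{l:easymatch} over the enlarged base, and then transfer the middle-extension property back to the original base by some comparison. The paper carries this out via the natural closed immersion $\iota_N\colon (\Ah\times\Bh_{D_0})^\infty\hookrightarrow \Ah\times\Bh_{D_N}$ (induced by $\calO_X(2D_0)\hookrightarrow\calO_X(2D_N)$), a Cartesian diagram expressing $\iota_N^{-1}(\Mh_{D_N,j})$ as a disjoint union of products $\Minf_{D_0,i}\times_{\Ainf}\Hilb^{i_\infty}(2Np^{-1}(\infty)/\Ainf)$, and proper base change; it then proves as a separate Claim that each $h_{i_\infty,*}\Ql$ is a rank-one local system on $\Ainf$, which is exactly what is needed to strip off the tensor factor.

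Your comparison mechanism is different and, as written, has genuine gaps. First, the map $q\colon\mathcal{V}\to(\Adel\times\Bh)^\infty$ that ``extracts the regular-at-$\infty$ part of $b^{\textup{new}}$'' is not a well-defined morphism: multiplication by $h_0\in H^0(X,\calO(2m\infty))$ gives a \emph{closed linear} embedding of $\Bh$ into $\calB^{\textup{new}}$, and there is no canonical retraction back to $\Bh$ (dividing by $h_0$ is not a sheaf map since $h_0$ has zeros elsewhere). Working with an open $\mathcal{V}$ and a projection $q$ also does not obviously produce the scheme-theoretic product decomposition of $\calM^{\textup{new}}_i$ you claim; the paper obtains it precisely because it base changes along a \emph{closed immersion} over which the cokernel of $\gamma^{\textup{new}}$ at $\infty$ is pinned down to be (a quotient of) the $2N$-th infinitesimal neighborhood of $p^{-1}(\infty)$. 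Second, to conclude that $\fdel_{i,*}L_{d-i}$ is a middle extension from the asserted decomposition you need the local factor at $\infty$ to contribute a \emph{local system}, not merely a complex with nonvanishing cohomology; the paper spends the bulk of its proof on an \'etale descent argument showing $h_{Nn,*}\Ql$ is a local system on $\Ainf$. Since the family $\calM^{\infty,\textup{new}}_{mn}$ genuinely varies with $a$, nonvanishing of $H^*$ alone is insufficient. You do flag the first issue (``promoting the product formula''), but the second is equally essential and is not addressed. The paper's route via a closed immersion, pointwise Cartesian diagrams, and an explicit verification of the local-system property is what makes the argument close.
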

\begin{proof}
For each $N\geq0$, let $D_N=D_0+N\infty$ be a new divisor on $X$. We can use $D_N$ instead of $D_0$ to define the various moduli spaces and they enjoy all the properties we have proved so far. For those moduli spaces that depends on the choice of $D_N$, we add subscripts ``$D_0$'' or ``$D_N$'' to emphasize the dependence. Consider the embedding
\begin{equation*}
\iota_N:(\Ah\times\Bh_{D_0})^{\infty}\hookrightarrow\Ah\times\Bh_{D_0}\hookrightarrow\Ah\times\Bh_{D_N} 
\end{equation*}
given simply by the embeddings $H^0(X,\calO_X(2D_0)\otimes\calL(D)^{\otimes i})\hookrightarrow H^0(X,\calO_X(2D_N)\otimes\calL(D)^{\otimes i})$ induced by the natural embedding $\calO_X(2D_0)\hookrightarrow\calO_X(2D_N)$. Recall from Remark \ref{rm:defd} that we use $D$ and $D_0$ to define an integer $d=n(n-1)\deg(D)/2+n\deg(D_0)$. Let $d_N:=n(n-1)\deg(D)/2+n\deg(D_N)=d+Nn[k(\infty):k]$. 

Let $p^{-1}(\infty)$ is the preimage of $\infty$ in the universal spectral curve $Y^{\infty}\to\Ainf$, and $2Np^{-1}(\infty)$ is viewed as a flat family of subschemes of $Y^{\infty}\to\Ainf$, the $2N^{\textup{th}}$ infinitesimal neighborhood of $p^{-1}(\infty)$ in $Y$. For each $0\leq i_\infty\leq 2Nn[k(\infty):k]$, consider the Hilbert scheme of the scheme $2Np^{-1}(\infty)$ over $\Ainf$:
\begin{equation*}
h_{i_{\infty}}:\Hilb^{i_{\infty}}(2Np^{-1}(\infty))/\Ainf)\to\Ainf.
\end{equation*}
We claim that for any $-d_N\leq j\leq d_N$ we have a Cartesian diagram
\begin{equation}\label{d:DNM}
\xymatrix{\coprod_{}\Minf_{D_0,i}\times_{\Ainf}\Hilb^{i_{\infty}}(2Np^{-1}(\infty))/\Ainf)\ar[d]^{\finf_{D_0,i}\times h_{i_\infty}}\ar[r]^(.7){\tili_N} & \Mh_{D_N,j}\ar[d]^{\fh_{D_N,j}}\\
(\Ah\times\Bh_{D_0})^{\infty}\ar[r]^{\iota_N} & \Ah\times\Bh_{D_N}}
\end{equation}
where the disjoint union is over the set $i+(Nn-i_{\infty})[k(\infty):k]=j$ and $0\leq i_{\infty}\leq 2Nn$; the morphism $\finf_{D_0,i}:\Minf_{D_0,i}\to(\Ah\times\Bh_{D_0})^{\infty}$ is the restriction of $\finf_{D_0,i}$. In fact, the verification of this Cartesian diagram is same as proving the product formula \ref{p:pdGL}, except here we are working over the base $(\Ah\times\Bh_{D_0})^{\infty}$ rather than over a point. 

We also have a commutative diagram relating the maps $\nu$ (see the beginning of \S \ref{ss:lsonM}) in the situation of $D_0$ and $D_N$:
\begin{equation*}
\xymatrix{\Minf_{D_0,i}\times_{\Ainf}\Hilb^{i_{\infty}}(2Np^{-1}(\infty))/\Ainf)\ar[d]^{\nu_{D_0}}\ar[r]^(.7){\tili_N(i,i_\infty)} & \Mh_{D_N,j}\ar[d]^{\nu_{D_N}}\\
\Sym^{d-i}((X-\infty)/k)\ar[r]^{+i_{\infty}\infty} & \Sym^{d_N-j}(X/k)}
\end{equation*}
Since $\infty$ is split in $X'$, the pull-back of $L$ to $\Spec k(x)$ (via $x$) is a trivial local system. Hence by Lemma \ref{l:L}, we have $(+i_\infty\infty)^*L^X_{d_N-j}\cong L^X_{d-i}$. Therefore
\begin{equation*}
\tili_N(i,i_\infty)^*L_{d_N-j}=\tili_N(i,i_\infty)^*\nu_{D_N}^*L^X_{d_N-j}\cong \nu_{D_0}^*(+i_\infty\infty)^*L^X_{d-i}=L_{d-i}\boxtimes_{\Ainf}\Ql.
\end{equation*}
Applying proper base change to the diagram \ref{d:DNM}, we have
\begin{equation}\label{eq:bcM}
\iota_N^*\fh_{D_N,j,*}L_{d_N-j}\cong\bigoplus_{i+(Nn-i_{\infty})[k(\infty):k]=j}\finf_{D_0,i,*}L_{d-i}\boxtimes_{\Ainf} h_{i_\infty,*}\Ql.
\end{equation}

Since $\infty$ is split in $X'$, we fix a point $\infty'\in|X'|$ over $\infty$ and let $p'^{-1}(\infty')$ be its preimage in the family of curves $Y'^{\infty}\to\Ainf$. Similarly, we have a Hilbert scheme for $0\leq i_\infty\leq 2Nn[k(\infty):k]$:
\begin{equation*}
h'_{i_\infty}:\Hilb^{i_{\infty}}(2Np'^{-1}(\infty'))/\Ainf)\to\Ainf.
\end{equation*}
We also have a Cartesian diagram
\begin{equation}\label{d:DNN}
\xymatrix{\coprod_{i_\infty=0}^{2Nn}\Ninf_{D_0}\times_{\Ainf}\Hilb^{i_{\infty}}(2Np'^{-1}(\infty'))/\Ainf)\ar[d]^{\ginf_{D_0}\times h'_{i_\infty}}\ar[r]^(.7){\tili'_N} & \Nh_{D_N}\ar[d]^{\gh_{D_N}}\\
(\Ah\times\Bh_{D_0})^{\infty}\ar[r]^{\iota_N} & \Ah\times\Bh_{D_N}}
\end{equation}
where $\ginf_{D_0}:\Ninf_{D_0}\to(\Ah\times\Bh_{D_0})^{\infty}$ is the restriction of $\gh_{D_0}$. This diagram can be verified similarly using the argument of the product formula \ref{p:pdU}, and notice that since $\infty$ is split in $X'$, we have the isomorphism (\ref{eq:NM}) by Lemma \ref{l:locisom}, which also works over $(\Ah\times\Bh)^{\infty}$ to justify the appearance of $\Hilb^{i_{\infty}}(2Np'^{-1}(\infty'))/\Ainf)$ in the above diagram. Applying proper base change to the diagram \ref{d:DNN}, we get
\begin{equation}\label{eq:bcN}
\iota_N^*\gh_{D_N,*}\Ql\cong\bigoplus_{i_\infty=0}^{2Nn}\ginf_{D_0,*}\Ql\boxtimes_{\Ainf}h'_{i_\infty,*}\Ql.
\end{equation}

\begin{claim}
For each $0\leq i_\infty\leq 2Nn[k(\infty):k]$, $h_{i_\infty,*}\Ql$ and $h'_{i_\infty,*}\Ql$ are local systems on $\Ainf$.
\end{claim}
\begin{proof}
We prove the statement for $h_{i_\infty,*}\Ql$, and argument for the other one is the same. Let $S=2Np^{-1}(\infty)\to\Ainf$ be the universal family of subschemes of $Y|_{\Ainf}$. Then by the definition of $\Ainf$, $S^{\red}=p^{-1}(\infty)$ is finite \'etale over $\Ainf$. Let $T\to\Ainf$ be a Galois \'etale cover which splits $S^{\red}/\Ainf$. We base change the situation from $\Ainf$ to $T$:
\begin{equation*}
\xymatrix{S_T=\coprod_{r=1}^{n[k(\infty):k]} S_r\ar[r]\ar[d] & S\ar[d]\\T\ar[r]^{t} & \Ainf}
\end{equation*}
where $S_T$ splits into pieces $S_r$. Each $S_r^{\red}\cong T$ and $S_r$ is the $2N^{\textup{th}}$ infinitesimal neighborhood of $S^{\red}_r\subset Y\times_{\Ainf}T$. Therefore we have a Cartesian diagram
\begin{equation*}
\xymatrix{\prod_{r}\coprod_{i_r}\Hilb^{i_r}(S_r/T)\ar[r]\ar[d]_{\prod\coprod h^T_{i_r}} & \coprod_{i_\infty}\Hilb^{i_\infty}(S/\Ainf)\ar[d]^{h_{i_\infty}}\\T\ar[r]^{t} & \Ainf}
\end{equation*}
where the product is the fibered product over $T$. By proper base change, we have
\begin{equation}\label{eq:bcT}
\bigoplus_{i_\infty=0}^{2Nn[k(\infty):k]}t^*h_{i_\infty,*}\Ql\cong\bigotimes_{r=1}^{n[k(\infty):k]}\left(\bigoplus_{i_r=0}^{2N}h^T_{i_r,*}\Ql\right).
\end{equation}
Now each finite map $S_r\to T$ has geometric fibers of the form $\Spec\Omega[x]/x^{2N}$, therefore the reduced geometric fibers of each $\Hilb^{i_r}(S_r/T)$ is a single point if $0\leq i_r\leq 2N$, hence $h^T_{i_r,*}\Ql=\Ql$. By (\ref{eq:bcT}), we conclude that $t^*h_{i_\infty,*}\Ql$ is a local system on $T$, hence $h_{i_\infty,*}\Ql$ is a local system on $\Ainf$ because $t:T\to\Ainf$ is \'etale.
\end{proof}

Since each $\ginfdel_{D_0,*}\Ql$ is a middle extension (Proposition \ref{p:smU} and \ref{p:small}) and each $h'_{i_\infty,*}\Ql$ is a local system, $\iota_N^*\gdel_{D_N,*}\Ql$ is also a middle extension by the isomorphism (\ref{eq:bcN}). Recall that we have a decomposition $\gdel_{D_N,*}\Ql\cong\oplus_{j}K_{D_N,d_N-j}$, therefore each $\iota_N^*K_{D_N,d_N-j}$ is also a middle extension on $(\Adel\times\Bh_{D_0})^{\infty}$.

For $-d_N+2g_Y-1\leq j\leq d_N-2g_Y+1$, we know from Lemma \ref{l:easymatch} that
\begin{equation*}
\fdel_{D_N,j,*}L_{d_N-j}\cong K_{D_N,d_N-j}.
\end{equation*}
Hence
\begin{equation}\label{eq:bothmid}
\iota_N^*\fdel_{D_N,j,*}L_{d_N-j}\cong\iota_N^*K_{D_N,d_N-j}
\end{equation}
are both middle extensions on $(\Adel\times\Bh)^{\infty}$. Now we choose $N$ such that $Nn[k(\infty):k]\geq2g_Y-1$ (note that $g_Y$ only depends on $\deg(D)$, hence independent of $N$). Then for any $-d\leq i\leq d$, we have
\begin{equation*}
-d_N+2g_Y-1\leq i\leq d_N-2g_Y+1,
\end{equation*}
which means (according to the isomorphism (\ref{eq:bcM})) that $\finfdel_{D_0,i,*}L_{d-i}\boxtimes_{\Ainf}h_{Nn,*}\Ql$ appears as a direct summand of $\iota_N^*\fdel_{D_N,i,*}L_{d_N-i}$, which is a middle extension by (\ref{eq:bothmid}). Since $h_{Nn,*}\Ql$ is a local system by the above Claim, we conclude that $\finfdel_{D_0,i,*}L_{d-i}$ is also a middle extension on $(\Adel\times\Bh)^\infty$, for any $-d\leq i\leq d$, as desired.
\end{proof}

\section{Proof of the local main theorem}\label{s:pf}
We prove Theorem \ref{th:main} in this section. Suppose $\char(F)=\char(k)=p>\max\{n,2\}$. We are left with the case $k'/k$ nonsplit, and we assume this from here on. We fix a collection of invariants $(a^0,b^0)$ with $a^0_i,b^0_i\in\calO_E^{\sigma=(-1)^i}$ which is strongly regular semisimple. Suppose $\val_F(\Delta_{a_0,b_0})$ is even. Let $\delta(a^0)$ be the local Serre invariant associated to the algebra $R_{a^0}$. i.e., $\delta(a^0)=\dim_k(\tilR_{a_0}/R_{a^0})$ where $\tilR_{a^0}$ is the normalization of $R_{a^0}$. 

\subsection{Local constancy of the local moduli spaces}
In this subsection, we prove an analogous statement to \cite[Proposition 3.5.1]{NFL} in our situation. This is a geometric interpretation of the Harish-Chandra's theorem on local constancy of orbital integrals.

\begin{prop}\label{p:locconst}
There is an integer $N\geq1$ (depending on $(a^0,b^0)$) such that for any field $\Omega\supset k$ and any collection of invariants $(a,b)$ with $a_i,b_i\in(\calO_E\otimes_k\Omega)^{\sigma=(-1)^i}$, if $(a,b)\equiv(a^0,b^0)\mod \varpi^{N}$, then
\begin{enumerate}
\item $(a,b)$ is strongly regular semisimple;
\item $\delta(a)\leq\delta(a^0)+n/2$;
\item There are canonical isomorphisms of schemes over $\Omega$:
\begin{eqnarray*}
\Mloc_{i,a,b}\otimes_k\Omega&\cong&\Mloc_{i,a^0,b^0}\otimes_k\Omega\\
\Nloc_{a,b}\otimes_k\Omega&\cong&\Nloc_{a^0,b^0}\otimes_k\Omega
\end{eqnarray*}
\end{enumerate}
\end{prop}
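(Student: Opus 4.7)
The idea is to choose $N$ so large that the truncation of $(R_a,\gamma_{a,b})$ modulo a fixed power of $\varpi$ agrees canonically with the truncation of $(R_{a^0},\gamma_{a^0,b^0})$, and that the relevant moduli spaces $\Mloc_{i,a,b}$ and $\Nloc_{a,b}$ depend only on this finite truncation.

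\textbf{Assertions (1) and (2).} By Lemma \ref{l:eqrs}, strong regular semisimplicity is equivalent to the nonvanishing of two discriminants: $\Delta_{a,b}$ itself, and the discriminant of the characteristic polynomial determined by $a$. Both are polynomials in the coordinates of $(a,b)$, so taking $N$ larger than the $F$-valuations of these discriminants at $(a^0,b^0)$ ensures the corresponding valuations at $(a,b)$ coincide with those at $(a^0,b^0)$; in particular they are nonzero, giving (1). For (2), the Serre invariant $\delta(-)$ is upper semi-continuous on the affine space of characteristic polynomials (see e.g.\ \cite{GKM} and the use in \cite[\S 3.7]{NFL}), so after enlarging $N$ we have $\delta(a)\leq \delta(a^0)$, which is stronger than the stated bound.

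\textbf{Assertion (3).} Let $M=\val_F(\Delta_{a^0,b^0})$ and further enlarge $N$ to exceed $M+1$ by a uniform constant depending only on $n$. Because $R_{a^0}^\vee/R_{a^0}$ has length $M$ and is therefore annihilated by $\varpi^M$, we have the sandwich $\varpi^M R_{a^0}^\vee\subset R_{a^0}\subset R_{a^0}^\vee$, so that every lattice $\Lambda$ appearing in the moduli problem for $(a^0,b^0)$ is determined by its image in the finite $\calO_F$-module $R_{a^0}^\vee/\varpi^{M+1}R_{a^0}^\vee$, which is naturally a module over $R_{a^0}/\varpi^{M+1}R_{a^0}$; the same holds on the $(a,b)$-side once we know $\val_F(\Delta_{a,b})=M$, which was established in (1). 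Under the congruence $(a,b)\equiv(a^0,b^0)\pmod{\varpi^N}$, the polynomials defining $R_a$ and $R_{a^0}$ agree modulo $\varpi^{M+1}$, yielding a canonical isomorphism of finite $(\calO_F/\varpi^{M+1})\otimes_k\Omega$-algebras
\[
R_a/\varpi^{M+1}R_a\otimes_k\Omega\isom R_{a^0}/\varpi^{M+1}R_{a^0}\otimes_k\Omega,
\]
compatible with Grothendieck--Serre duality; since the entries of $\gamma_{a,b}$ are polynomial in $b$, they agree modulo $\varpi^{M+1}$ with those of $\gamma_{a^0,b^0}$ under this identification. The moduli condition $R_a\subset\Lambda\subset R_a^\vee$ with $\leng_{\calO_F}(R_a^\vee/\Lambda)=i$ is therefore expressible purely in terms of this finite truncated data, producing a canonical isomorphism $\Mloc_{i,a,b}\otimes_k\Omega\isom \Mloc_{i,a^0,b^0}\otimes_k\Omega$. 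For $\Nloc_{a,b}$, observe that the Hermitian form \eqref{eq:HermR} is also given by polynomial data in $b$, hence agrees modulo $\varpi^{M+1}$, so the self-duality condition descends to the truncation and gives the analogous isomorphism.

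\textbf{Main obstacle.} The delicate point is making the identification of the truncations genuinely canonical and uniform in $\Omega$, simultaneously respecting the $\calO_F$-algebra structure, the duality pairing, the involution $\sigma_R$, and (in the unitary case) the induced Hermitian form. This amounts to a careful choice of $N$ (something like $N\geq M+1+n$ should suffice) propagating the congruence through each of these structures. Once the canonical identification is in place, the isomorphism of moduli schemes is formal, since the moduli problem has been reduced to parametrizing sub-$R_a$-modules of a finite object that is canonically identified with its $(a^0,b^0)$-counterpart.
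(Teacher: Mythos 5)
Your treatment of assertions (1) and (3) follows essentially the paper's route: for (1), nonvanishing of $\Disc(P_a)$ and $\Delta_{a,b}$ is detected modulo a sufficiently high power of $\varpi$; and for (3), once $N\geq\val_F(\Delta_{a^0,b^0})$ the $R_a$-module $R_a^\vee/\gamma_{a,b}(R_a)$ and, in the unitary case, the induced duality structure depend only on $(a,b)\bmod\varpi^N$, and the moduli problems parametrize submodules of this finite truncated object.

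The genuine gap is in assertion (2). You invoke upper semicontinuity of $\delta$, citing \cite{GKM} and \cite[\S 3.7]{NFL}, but the semicontinuity established there is the Zariski-closedness of $\calA^{\geq\epsilon}$ in the Hitchin base over the constant field $k$, which governs specialization along $\calA$; it says nothing about the $\varpi$-adic perturbation $(a,b)\equiv(a^0,b^0)\pmod{\varpi^N}$. In this setting $a^0$ is not a Zariski specialization of $a$, so semicontinuity does not give $\delta(a)\leq\delta(a^0)$. (A correct proof of the even stronger statement $\delta(a)=\delta(a^0)$ for $N\gg0$ would go through a Krasner-type stability of the factorization type of $P_a$ under $\varpi$-adic perturbation, which is not what your citation provides.) The paper avoids this issue entirely with the elementary chain $\delta(a)\leq\val_F(\Disc(P_a))/2=\val_F(\Disc(P_{a^0}))/2\leq\delta(a^0)+n/2$, using the dimension formula of \cite{Be}; this weaker bound is exactly what is needed in \S\ref{ss:pre}. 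You should either substitute the discriminant bound or supply the $\varpi$-adic stability argument. A smaller point: your proposed threshold $N\geq M+1+n$ for part (3), with $M=\val_F(\Delta_{a^0,b^0})$, does not by itself dominate $\val_F(\Disc(P_{a^0}))$, which you need for part (1); the clean choice is $N$ strictly larger than both valuations, as you already use implicitly in (1).
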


\begin{proof}
We stick to the case $\Omega=k$, the general case is argued in the same way. First of all, by Lemma \ref{l:eqrs}, the strong regular semisimplicity of $(a,b)$ is checked by the nonvanishing of polynomials equations with $\calO_F$-coefficients in $a_i,b_i$: the discriminant $\Disc(P_{a})$ of the polynomial $P_{a}(t)=t^n-a_1t{n-1}\cdots+(-1)^na_n$ and the $\Delta$-invariant $\Delta_{a,b}$. Whenever $(a,b)\equiv(a^0,b^0) \mod\varpi^{N}$, we have
\begin{eqnarray*}
\Disc(P_{a})&\equiv&\Disc(P_{a^0}) \mod\varpi^{N}\\
\Delta_{a,b}&\equiv&\Delta_{a^0,b^0} \mod\varpi^{N}
\end{eqnarray*}
If we choose $N>\max\{\val_F(\Disc(P_{a^0})),\val_F(\Delta_{a^0,b^0})\}$, then whenever $(a,b)\equiv(a^0,b^0)\mod\varpi^{N}$, $\Disc(P_{a})$ and $\Delta_{a,b}$ are nonzero, hence $(a,b)$ are strongly regular semisimple.

Now fix this choice of $N$ and any $(a,b)$ such that $a_i,b_i\in(\calO_E\otimes_k\Omega)^{\sigma=(-1)^i}$ and $(a,b)\equiv(a^0,b^0)\mod\varpi^{N}$. Let $\gamma^0=\gamma_{a^0,b^0}$ and $\gamma=\gamma_{a,b}$

By the formula for $\delta(a)$ (\cite{Be} and \cite[\S 3.7]{NFL}), we have
\begin{equation*}
\delta(a)\leq\val_F(\Disc(P_a))/2=\val_F(\Disc(P_{a^0}))/2\leq\delta(a^0)+n/2.
\end{equation*}
 
Since $N\geq\val_F(\Delta_{a^0,b^0})=\val_F(\Delta_{a,b})$, we have
\begin{eqnarray}\label{eq:RmodN}
R_{a}^\vee/\gamma(R_{a})=(R_{a}^\vee/\varpi^NR_{a}^\vee)/\gamma(R_{a}/\varpi^NR_{a})\\
\label{eq:RpmodN}
R_{a^0}^\vee/\gamma^0(R_{a^0})=(R_{a^0}^\vee/\varpi^NR_{a^0}^\vee)/\gamma^0(R_{a^0}/\varpi^NR_{a^0}).
\end{eqnarray}

We prove that $\Mloc_{i,a,b}$ and $\Mloc_{i,a^0,b^0}$ are canonically isomorphic. Firstly we have a canonical isomorphism of $\calO_F/\varpi^N\calO_F$-algebras
\begin{equation*}
\iota:R_{a}/\varpi^NR_{a}\cong R_{a^0}/\varpi^NR_{a^0}. 
\end{equation*}
We also have a commutative diagram
\begin{equation*}
\xymatrix{R_{a}/\varpi^NR_{a}\ar[r]^{\gamma}\ar[d]^{\iota} & R_{a}^\vee/\varpi^NR_{a}^\vee\\
R_{a^0}/\varpi^NR_{a^0}\ar[r]^{\gamma^0} & R_{a^0}^\vee/\varpi^NR_{a^0}^\vee\ar[u]^{\iota^\vee}}
\end{equation*}
because $\gamma\mod\varpi^N$ only depends on $(a,b)\mod\varpi^N$. Therefore, from (\ref{eq:RmodN}) and (\ref{eq:RpmodN}) we conclude that $R_{a}^\vee/\gamma(R_{a})$ as an $R_{a}/\varpi^NR_{a}$-module is canonically isomorphic to $R_{a^0}^\vee/\gamma^0(R_{a^0})$ as an $R_{a^0}/\varpi^NR_{a^0}$-module. Looking back into the definition of $\Mloc_{i,a,b}$, we observe that this scheme canonically only depends on $R_{a}^\vee/\gamma(R_{a})$ as an $R_{a}$-module. Therefore we get a canonical isomorphism $\Mloc_{i,a,b}\cong\Mloc_{i,a^0,b^0}$.

The argument for the other isomorphism $\Nloc_{a,b}\cong\Nloc_{a^0,b^0}$ is the same.
\end{proof}

\subsection{Preparations}\label{ss:pre}
We fix a smooth, projective and geometrically connected curve $X$ over $k$ of genus $g$ with a $k$-point $x_0$. Also fix an \'etale double cover $\pi:X'\to X$, also geometrically connected, with only one closed point $x'_0$ above $x_0$. We choose identifications $\calO_F\isom\calO_{X,x_0}$ and $\calO_E\cong\calO_{X',x'_0}$.

Fix an integer $\delta\geq\delta(a^0)+n/2$. Fix effect divisors $D=2D'$ and $D_0$ on $X$, disjoint from $x_0$, such that
\begin{eqnarray*}
\deg(D)&\geq&\max\{c_\delta,2g+2Nn+1\}\\
\deg(D_0)&\geq&\frac{n-1}{2}\deg(D)+2g+\max\left\{\frac{\delta}{n},(1+\frac{1}{n})N\right\}\\
\end{eqnarray*}
These numerical assumptions will make sure that all the numerical conditions in the propositions or lemmas of the paper (including those which we are about to prove) are satisfied.

For each closed point $x:\Spec k(x)\to X$, let $\varpi_x$ be a uniformizing parameter of $\calO_{X,x}$ and $F_x$ the field of fractions of $\calO_{X,x}$; let $\Frob_x$ be the geometric Frobenius element in $\Gal(\overline{k(x)}/k(x))$. Let $E_x$ be the ring of total fractions of $\calO_{X',x}$. Let $\eta_x$ be the quadratic character of $F_x^\times$ associated to the quadratic extension $E_x/F_x$.

Recall that the double cover $\pi:X'\to X$ gives a local system $L$ according to Notation \ref{n:loc}.
Let $L_x=x^*L$ be the rank one local system on $\Spec k(x)$ given by the pull-back of $L$ via $x:\Spec k(x)\to X$. Then we have
\begin{equation}\label{eq:eta}
\eta_x(\varpi_x)=\Tr(\Frob_x,L_x)
\end{equation}
 
Let $\calA_0\times\calB_0\subset\calA\times\calB$ be the affine subspace consisting of $(a,b)$ such that
\begin{equation*}
(a,b)\equiv(a^0,b^0)\mod\varpi^N
\end{equation*}
 
\begin{lemma} Let $\calA'_0\subset\calA_0\cap\Adel$ the open locus of $a$ such that $Y_a$ is smooth away from $p_a^{-1}(x)$. If $\deg(D)\geq2g+2Nn+1$ and $2\deg(D_0)\geq2g+N-1$, then $\calA'_0$ and $\calB_0$ are nonempty.
\end{lemma}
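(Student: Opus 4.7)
The lemma splits into two independent nonemptiness assertions.

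For $\calB_0$: the constraint $b \equiv b^0 \pmod{\varpi^N}$ on the $i$th summand $H^0(X,\calO_X(2D_0)\otimes\calL(D)^{\otimes i})$ of $\calB$ prescribes the $N$-jet at $x_0$, so $\calB_0$ is a translate of $\bigoplus_{i=0}^{n-1}H^0(X,\calO_X(2D_0-Nx_0)\otimes\calL(D)^{\otimes i})$. Nonemptiness is equivalent to surjectivity of the jet map, which by the long exact sequence reduces to $H^1(X,\calO_X(2D_0-Nx_0)\otimes\calL(D)^{\otimes i})=0$ for every $0\le i\le n-1$. Using $\calL^{\otimes 2}=\calO_X$, Serre duality translates this into the degree bound $2\deg(D_0)+i\deg(D)>2g-2+N$, and the worst case $i=0$ is exactly the hypothesis $2\deg(D_0)\ge 2g+N-1$.

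For $\calA'_0$: the analogous Serre-vanishing argument applied to $H^1(X,\calL(D)^{\otimes i}(-Nx_0))$ (worst case $i=1$) shows $\calA_0$ itself is nonempty, under the weaker condition $\deg(D)\ge 2g+N-1$ which is implied by $\deg(D)\ge 2g+2Nn+1$. I then verify, for a generic $a\in\calA_0$, the remaining conditions cutting out $\calA'_0$ inside $\calA_0$. The bound $\delta(a)\le\delta(a^0)+n/2\le\delta$ is automatic for every $a\in\calA_0$ by Proposition~\ref{p:locconst}(2). Furthermore $\Disc(P_a)\equiv\Disc(P_{a^0})\pmod{\varpi^N}$ together with $N>\val_F\Disc(P_{a^0})$ forces $\val_{x_0}\Disc(P_a)=\val_F\Disc(P_{a^0})$, so $\Disc(P_a)\not\equiv 0$ and $\calA_0\subset\Ared$. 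The two remaining open conditions are: integrality of $Y_a'$ (that $a\in\Ah$) and smoothness of $Y_a$ on $X\setminus p_a^{-1}(x_0)$.

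Both follow from a Bertini-type mobility argument. Writing $\calA_0=a^0+V$ with $V=\bigoplus_iH^0(X,\calL(D)^{\otimes i}(-Nx_0))$, each summand bundle has degree $\ge\deg(D)-N\ge 2g+1$, hence is very ample on $X$; in particular $V$ surjects onto the $1$-jet space of each $\calL(D)^{\otimes i}$ at every point of $X\setminus\{x_0\}$. For smoothness, consider the fiberwise-singular incidence
\[
Z=\{(a,x,t)\in\calA_0\times\Tot(\calL(D))|_{X\setminus x_0}:P_a(x,t)=\partial_tP_a(x,t)=\partial_xP_a(x,t)=0\}.
\]
At each geometric $(x,t_0)$, a direct linear-algebra check shows the three Jacobian functionals are independent on the $1$-jet image of $V$, hence cut $V$ in codimension $3$. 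Since $(X\setminus x_0)\times\Tot(\calL(D))|_{X\setminus x_0}$ has dimension $2$, one gets $\dim Z\le\dim\calA_0-1$, and the projection $Z\to\calA_0$ is not dominant; the generic $a\in\calA_0$ makes $Y_a$ smooth over $X\setminus p_a^{-1}(x_0)$. For integrality, Lemma~\ref{l:intcomp} gives $\codim_\calA(\calA\setminus\Ah)\ge\deg(D)>nN\ge\codim_\calA(\calA_0)$, and an entirely parallel mobility argument—using the freedom of $V$ away from $x_0$ to break any potential global factorization $P_a=P_1\cdot P_2$ in $k(X)[t]$—shows $\calA_0\cap\Ah$ is a nonempty open dense subset of $\calA_0$.

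The main technical point is the integrality step: reducibility of $P_a$ is a global Galois-theoretic condition that cannot be detected from the restriction to any formal neighborhood of $x_0$, so pure codimension comparison does not immediately preclude $\calA_0$ from being entirely contained in the reducible locus. Ruling this out requires exploiting the very ampleness of the bundles $\calL(D)^{\otimes i}(-Nx_0)$ at several points of $X\setminus\{x_0\}$ simultaneously, and this is where the full strength of the numerical hypothesis $\deg(D)\ge 2g+2Nn+1$ is used.
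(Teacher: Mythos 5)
Your argument for $\calB_0$ and for the nonemptiness of $\calA_0$ matches the paper exactly (jet surjectivity via Serre vanishing / Riemann--Roch). The Bertini incidence-variety argument for smoothness away from $x_0$ is also the same argument the paper invokes via \cite[Prop. 4.6.1]{NFL}. However, there are two problems in the rest.

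First, your claim that \emph{``the bound $\delta(a)\le\delta(a^0)+n/2\le\delta$ is automatic for every $a\in\calA_0$''} is false. Proposition~\ref{p:locconst}(2) bounds only the \emph{local} Serre invariant at $x_0$, whereas $\Adel$ is cut out by the \emph{global} invariant $\delta(a)=\sum_x\delta(a,x)$. A point $a\in\calA_0$ agrees with $a^0$ only in the formal neighborhood of $x_0$ and is unconstrained elsewhere, so a general $a\in\calA_0$ can have arbitrarily bad singularities at other points and hence arbitrarily large global $\delta(a)$. The inequality becomes valid only once you have \emph{already} arranged smoothness on $X\setminus p_a^{-1}(x_0)$, which forces $\delta(a)=\delta(a,x_0)$; that is the order of argument the paper uses, and your write-up should not present this bound as a free consequence of the congruence condition alone.

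Second, the worry you raise about integrality --- that ``pure codimension comparison does not immediately preclude $\calA_0$ from being entirely contained in the reducible locus'' --- identifies a gap that is not there, and the replacement machinery you propose (mobility of $V$ at several points to break a factorization) is unnecessary. The paper's dimension count is already conclusive: $\calA_0$ is an affine-linear subspace of $\calA$ of codimension $nN\le 2Nn$, contained in $\Ared$ by the regular-semisimplicity argument, while Lemma~\ref{l:intcomp} gives $\codim_{\Ared}(\Ared-\Ah)\ge\deg(D)>2Nn$. Since $\calA_0$ is irreducible of dimension $\dim\calA-nN$, which strictly exceeds $\dim(\Ared-\Ah)\le\dim\calA-\deg(D)$, it simply cannot lie inside $\Ared-\Ah$; hence $\calA_0\cap\Ah$ is nonempty (and open dense in $\calA_0$). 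No local detection of reducibility is needed --- the comparison is purely a matter of dimensions of the ambient closed subscheme and the constraint subspace. Your alternative route may well succeed, but the hypothesis $\deg(D)\ge 2g+2Nn+1$ is not being used to compensate for a deficiency in the codimension estimate; it is used exactly as in the paper, to make that estimate numerically strong enough.
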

\begin{proof}
First we have to make sure that $\calA_0$ and $\calB_0$ are nonempty. For this it suffices to show that the following evaluation maps at a $N^{\textup{th}}$ infinitesimal neighborhood of $x$ are surjective:
\begin{eqnarray*}
H^0(X,\calL(D)^{\otimes i})&\to&\calO_{F}/\varpi^N, 1\leq i\leq n\\
H^0(X,\calO_X(2D_0)\otimes\calL(D)^{\otimes i})&\to&\calO_F/\varpi^N, 0\leq i\leq n-1
\end{eqnarray*}
which is guaranteed as long as $\deg(D)\geq2g+N-1$ and $2\deg(D_0)\geq2g+N-1$.

Next we make sure that $\calA_0\cap\Ah$ is nonempty. By Proposition \ref{p:locconst}, any $a\in\calA_0$ is strongly regular semisimple at $x_0$, hence in particular $R_a$ is reduced. This implies $\calA_0\subset\Ared$. By Lemma \ref{l:intcomp},
\begin{equation*}
\codim_{\Ared}(\Ared-\Ah)\geq\deg(D)>2Nn\geq\codim_{\Ared}(\calA_0)
\end{equation*}
Therefore $\calA_0\cap\Ah\neq\varnothing$. 

Finally we prove that $\calA'_0$ is nonempty. We base change the whole situation to $\overline{k}$. We use the argument for the Bertini's theorem. For details, we refer to \cite[Proposition 4.6.1]{NFL}. We only point out the for $\deg(D)\geq2g+N+1$, the evaluation maps at both $x_0$ and any other $x\in X(\overline{k})$:
\begin{equation*}
\ev(x_0)\oplus\ev(x):H^0(\geom{X}{k},\calL(D)^{\otimes i})\to\calO_{F}/\varpi^N\bigoplus\calO_{\geom{X}{k},x}/\varpi_x^2
\end{equation*}
is also surjective. This is all we need to apply the Bertini argument.

We still have to check that for each $a\in\calA_0'$, $\delta(a)\leq\delta$. But since $Y_a$ is smooth away from $p_a^{-1}(x_0)$, we have $\delta(a)=\delta(a,x)\leq\delta(a^0)+n/2\leq\delta$ by Proposition \ref{p:locconst}(2).
\end{proof}

Since $\calA'_0$ is geometrically irreducible (because it is an open subset of an affine space) and nonempty, it contains a $k_m$-point for every $m\geq m_0$. Now we fix $m\geq m_0$ and fix a point $a\in\calA'_0(k_m)$. We base change the whole situation from $k$ to $k_m$; in particular, let $X_{m}=X\otimes_kk_m$. Since $D$ and $D_0$ are disjoint from $x_0$, the trivializations that we fixed allow us to get $\sigma$-equivariant isomorphisms:
\begin{eqnarray*}
R_{a}(\calO_E\otimes_kk_m)&\cong&\calO_{Y'_a}(-D_0)|_{\Spec\calO_{Y_a',x_0}};\\
R_{a}^\vee(\calO_E\otimes_kk_m)&\cong&\omega_{Y'_a/X}(D_0)|_{\Spec\calO_{Y_a',x_0}}.
\end{eqnarray*}

\begin{lemma}\label{l:nonvan} If $n(\deg(D_0)-g+1)\geq(n+1)N+g_Y$, then there exists $b\in\calB_0(k_m)$ such that for each closed point $x\neq x_0$ of $X_m$, $\calN^{x}_{a_x,b_x}(k_m(x))\neq\varnothing$.
\end{lemma}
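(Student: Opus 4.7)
My plan is to reduce the non-emptiness of each $\calN^x_{a_x,b_x}(k_m(x))$ to a divisor condition on the smooth curve $Y_a^{\circ}:=Y_a\setminus p_a^{-1}(x_0)$, and then produce a suitable $b$ by a Bertini-type argument inside the linear system cut out by $\calB_0$.

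First I would use the product formula of Proposition~\ref{p:pdU} to write $\calN^x_{a_x,b_x}=\prod_{y\in p_a^{-1}(x)}\calN^{x,y}$. Since $a\in\calA'_0$, the curve $Y_a$ is smooth at every $y$ with $p_a(y)\neq x_0$, so $\calO_{Y_a,y}^{\wedge}$ is a DVR and the analysis in the proofs of Lemmas~\ref{l:split} and \ref{l:DVR} applies factorwise. In particular, $\calN^{x,y}(k_m(x))$ is non-empty whenever either $d_y:=\mathrm{mult}_y\coker(\gamma_{a,b})=0$, or the \'etale double cover $\pi_a:Y'_a\to Y_a$ is split at $y$ (equivalently $k(y)\supset k(\pi_a^{-1}(y))$), or $d_y$ is even. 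Thus it suffices to find $b\in\calB_0(k_m)$ such that the effective divisor $\div^{x_0}(\gamma_{a,b})$ on $Y_a^{\circ}$ is reduced and supported at $k_m$-rational points of $Y_a$ lying above $k_m$-rational points of $X_m$ that split in $\pi:X'\to X$.

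Second, the map $b\mapsto\gamma_{a,b}$ identifies $\calB_0$ with the affine coset $\gamma_{a,b^0}+V^N$ of $H^0(Y_a,\omega_{Y_a/X}(2D_0))$, where $V^N:=H^0(Y_a,\omega_{Y_a/X}(2D_0)\otimes\calI^N_{p_a^{-1}(x_0)})$. A direct degree computation using Lemma~\ref{l:arithg} and the hypothesis $n(\deg(D_0)-g+1)\geq(n+1)N+g_Y$ shows that the line bundle $\omega_{Y_a/X}(2D_0)(-N\cdot p_a^{-1}(x_0))$ has degree exceeding $2g_Y-1$ by enough that, for any finite $S\subset Y_a^{\circ}$ of bounded cardinality, the evaluation map $V^N\to\bigoplus_{y\in S}\omega_{Y_a/X}(2D_0)|_y$ is surjective. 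This is the direct analogue of the evaluation-surjectivity argument used in the Claim inside the proof of Proposition~\ref{p:small}; in particular $V^N$ is basepoint-free on $Y_a^{\circ}$ and separates both points and $1$-jets there.

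Third, by enlarging $m\geq m_0$ if necessary (which is permitted), the Chebotarev-type count of $k_m$-rational split points of $\pi:X'\to X$, together with the smoothness of $Y_a^{\circ}$, produces an abundant supply of $k_m$-rational split-at-$y$ points on $Y_a^{\circ}$. Applying Bertini to the base-point-free linear system $\gamma_{a,b^0}+V^N$ on the smooth quasi-projective curve $Y_a^{\circ}$, the generic section has reduced divisor, and by imposing vanishing at a preselected configuration of split-at-$y$ rational points (using the evaluation surjectivity above), we cut out a geometrically irreducible non-empty Zariski-open subvariety of $\calB_0$. A Lang--Weil count then yields a $k_m$-point $b$ with the required property.

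The principal obstacle is the mismatch between Bertini transversality (a Zariski-open condition) and the arithmetic requirement that the support of $\div^{x_0}(\gamma_{a,b})$ lie in the split locus (not itself Zariski-open, since over $\overline{k_m}$ every closed point is split). The way around this is to commit to a specific configuration of split-at-$y$ $k_m$-rational points in advance, use the evaluation surjectivity of $V^N$ to force vanishing there, and then check that enough freedom remains inside $V^N$ so that the generic resulting $\gamma_{a,b}$ has its divisor on $Y_a^{\circ}$ reduced and equal to exactly that prescribed support.
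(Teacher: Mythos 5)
Your reduction in the first step is sound and matches the paper's implicit use of Lemma~\ref{l:DVR}: for $y\in Y_a^{\circ}$ lying over $x\neq x_0$, the local space $\calN^{x,y}$ is non-empty iff $y$ is split in $Y_a'\to Y_a$ or $d_y$ is even. The difficulties begin in Steps~2--3, where the Bertini construction of $b$ has a genuine gap that I do not see how to repair along the lines you indicate.

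The divisor $\div(\gamma_{a,b})|_{Y_a^{\circ}}$ has degree exactly $2d-\val_F(\Delta_{a^0,b^0})$, where $2d=\deg\omega_{Y_a/X}(2D_0)$. To make this divisor equal a \emph{preselected} reduced split configuration $S$ you must impose $|S|=2d-\val_F(\Delta_{a^0,b^0})$ vanishing conditions on $V^N$, on top of the $\approx Nn$ conditions already cutting out $V^N\subset H^0(Y_a,\omega_{Y_a/X}(2D_0))$. Evaluation surjectivity at $S$ plus the $N$-jet at $x_0$ requires $H^1$ to vanish for a line bundle of degree $2d-|S|-Nn=\val_F(\Delta_{a^0,b^0})-Nn$, which is \emph{negative}: recall that $N$ was chosen in Proposition~\ref{p:locconst} to satisfy $N>\val_F(\Delta_{a^0,b^0})$. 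So there is no freedom left at all, let alone enough to pin down the divisor; the evaluation map certainly fails to be surjective for $|S|$ of this size. Your proposal correctly identifies evaluation surjectivity as the right tool for \emph{bounded} $|S|$, but then quietly applies it to an unbounded set, and the numerics break.

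There is a second, more structural point. You never invoke the hypothesis that $\val_F(\Delta_{a^0,b^0})$ is even, which is imposed at the start of \S4 and is essential: without it the desired $b$ does \emph{not} exist (this is the content of Lemma~\ref{l:cancel}). Any correct proof must locate where this parity enters. In the paper's argument it enters as the vanishing of the id\`ele-class obstruction $\partial(\gamma)\in\ZZ/2$ to solving $\Nm(\theta)=\gamma$; this is what lets one build a global $\sigma$-conjugate-self-dual line bundle $(\calF',h)$ on $Y_a'$ with prescribed behavior at $x_0$ and $h$ an isomorphism off $x_0$. Once $(\calF',h)$ exists, the evaluation-surjectivity step is applied only to the $N$-jet at $x_0$ --- about $2nN$ conditions, which is exactly what the hypothesis $n(\deg D_0-g+1)\geq (n+1)N+g_Y$ is calibrated for --- to produce $\alpha'$ with $\iota\circ\alpha'\equiv 1\pmod{\varpi^N}$, and then $b'$ is read off from $\gamma'=\sigma^*\alpha'^{\vee}\circ h\circ\alpha'$. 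Non-emptiness of $\calN^x$ for $x\neq x_0$ is then immediate because $\calF'|_{\Spec\calO_{Y_a',x}}$ is itself a self-dual lattice. In short: the paper does not try to control the divisor of $\gamma_{a,b}$ at all; it constructs the Hermitian datum directly and only then extracts $b$. Your strategy inverts this order and, in doing so, loses both the numerical room and the place where the parity hypothesis does its work.

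A smaller remark: the phrase ``by enlarging $m\geq m_0$ if necessary'' should be handled with care, since \S\ref{s:pf} eventually runs the argument for every $m\geq m_0$; but since only the tail matters for comparing semi-simplified Frobenius modules, this is fixable and not the main issue.
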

\begin{proof}
We first choose any $b\in\calB_0(k_m)$ (which exists because $\calB_0\neq\varnothing$ is an affine space over $k$). The pair $(a,b)$ determines $\gamma=\gamma_{a,b}:R_a\otimes_kk_m\hookrightarrow R_a^\vee\otimes_kk_m$.

\begin{claim}
There exists $\calF'\in\cPic(Y_a')$ and a homomorphism $h:\calF'\hookrightarrow\sigma^*\calF'^\vee$ such that
\begin{enumerate}
\item $\sigma^*h^\vee=h$;
\item $h$ is an isomorphism away from $p'^{-1}_a(x_0')$;
\item There is an isomorphism $\iota:\calF'|_{\Spec\calO_{Y'_{a},x_0}}\cong R_a(\calO_E\otimes_kk_m)$ such that the following diagram is commutative
\begin{equation*}
\xymatrix{\calF'|_{\Spec\calO_{Y'_{a},x_0}}\ar[d]^{\iota}\ar[r]^{h} & \sigma^*\calF'^\vee|_{\Spec\calO_{Y'_{a},x_0}}\\
R_a(\calO_E\otimes_kk_m)\ar[r]^{\sigma_R\circ\gamma} & R_a^\vee(\calO_E\otimes_kk_m)\ar[u]^{\sigma^*\iota^\vee}}
\end{equation*}
\end{enumerate}
\end{claim}
\begin{proof}
Let $\tilR$ be the normalization of $R_a\otimes_kk_m$. We can write $\tilR=\prod_{t}^r\tilR_t$ where each $\tilR_t$ is a DVR with residue field $k_m(R_t)$. After choosing an isomorphism $R_a^\vee\otimes_kk_m\cong R_a\otimes_kk_m$ of $R_a\otimes_kk_m$-modules (which exists because $R_a\otimes_kk_m$ is Gorenstein), we can view $\gamma$ as an element of $R_a\otimes_kk_m$. In terms of the above decomposition, $\gamma=(\gamma_1,\cdots,\gamma_r)$ where $\gamma_t\in\tilR_t$, and we have
\begin{equation}\label{eq:valgamma}
\val_F(\Delta_{a^0,b^0})=\val_{F\otimes_kk_m}(\Delta_{a,b})=\sum_{t=1}^r\val_{R_t}(\gamma_t)[k_m(R_t):k_m].
\end{equation}
Since $\val_F(\Delta_{a^0,b^0})$ is even, the RHS is also even.

Then condition (1) is automatically satisfied once we have such an $h$ since $Y_a'$ is geometrically irreducible. Let $U'=Y'_a-p'^{-1}(x_0')$. Suppose $\calF_0'$ is a line bundle on $U'$ with an isomorphism $h_0:\calF_0'\isom\sigma^*\calF_0'^\vee$. Such a line bundle exists because we have a Kostant section $\Ah\to\NHit|_{\Ah}$ by \cite[\S 2.3]{LN} when $D=2D'$. The existence of the Kostant section requires that $\char(k)>n$. Any other $\sigma$-conjugate self-dual line bundle on $U'$ has the form $\calF'_0\otimes\calG$ where $\calG$ is a line bundle on $U'$ with an isomorphism $s:\calG\isom\sigma^*\calG^{-1}$. We want to glue $(\calF_0'\otimes\calG,h_0\otimes s)$ with $(R_a(\calO_E\otimes_kk_m),\sigma_R\circ\gamma)$ over the punctured formal neighborhood $U'_{x_0}$ of $p_a'^{-1}(x_0')$ to get the desired pair $(\calF',h)$. For this we have to choose an isomorphism
\begin{equation*}
\rho:\calF_0'\otimes\calG|_{U'_{x_0}}\cong R_a(E\otimes_kk_m)
\end{equation*}
such that the following diagram is commutative:
\begin{equation*}
\xymatrix{\calF_0'\otimes\calG|_{U'_{x_0}}\ar[r]^{h_0\otimes s}\ar[d]^{\rho} & \sigma^*(\calF_0'\otimes\calG)^\vee|_{U'_{x_0}}\ar@{=}[r] & \sigma^*\calF_0'^\vee\otimes\sigma^*\calG^{-1}|_{U'_{x_0}}\\
R_a(E\otimes_kk_m)\ar[r]^{\sigma\circ\gamma} & R_a(E\otimes_kk_m)\ar[u]^{\sigma^*\rho^\vee}}
\end{equation*}

We can translate this problem into the language of id\`eles. Let $K$ (resp. $K'$) be the function field of the geometrically connected curve $Y_a$ (resp. $Y_a'$) over $k_m$. Let $\AA_K$ (resp. $\AA_{K'}$) be the set of ad\`eles of $K$ (resp. $K'$) with $\OO_K$ (resp. $\OO_{K'}$) the product of local rings integers. Let $\Nm:\AA^\times_{K'}\to\AA^\times_K$ be the norm map. Then $\tilR$ is the product of local fields corresponding to places of $K$ over $x_0$. Thus we get a canonical embedding $\tilR\subset\AA_{K}$. In particular, we can identify $\gamma$ with an id\`ele in $\AA^\times_{K}$ which is nontrivial only at places over $x_0$.

A choice of the above pair $(\calG,s,\rho)$ (up to isomorphism) is the same as the choice of an id\`ele class $\theta\in K'^\times\backslash\AA^\times_{K'}/\OO^\times_{K'}$ such that $\Nm(\theta)=\theta\cdot\sigma\theta=\gamma$ as an id\`ele class in $K^\times\backslash\AA^\times_{K}/\OO^\times_{K}$. Let $W_{K'}$ and $W_K$ be the Weil groups of $K'$ and $K$. By class field theory, we have the following commutative diagram
\begin{equation*}
\xymatrix{K'^\times\backslash\AA^\times_{K'}\ar[d]^{\Art_{K'}}_{\wr}\ar[r]^{\Nm} & K^\times\backslash\AA^\times_{K}\ar[d]^{\Art_{K}}_{\wr}\ar[r]^{\partial} &  \ZZ/2\ar[d]^{\wr}\\
W^{\abel}_{K'}\ar[r] & W^{\abel}_K\ar[r] & \Gal(K'/K)}
\end{equation*}
where the map $\partial$ is defined by
\begin{equation*}
\partial(\xi)=\sum_{v \textup{ nonsplit}}\val_{K_v}(\xi_v)[k_v:k_m]\mod2
\end{equation*}
where $v$ runs over all places of $K$ which is nonsplit in $K'$ and $k_v$ is the residue field of $\calO_{K_v}$. Now to solve our problem, i.e., to solve the equation $\Nm(\theta)=\gamma$, the only obstruction is $\partial(\gamma)$. Since $x_0$ is nonsplit in $X'_m$, a place $v$ over $x_0$ is nonsplit in $K'$ if and only if $[k_v:k_m]$ is odd. Therefore
\begin{eqnarray*}
\partial(\gamma)&\equiv&\sum_{v|x_0, [k_v:k_m]\textup{ odd}}\val_{K_v}(\gamma_v)[k_v:k_m]\\
&\equiv&\sum_{v|x_0}\val_{K_v}(\gamma_v)[k_v:k_m]\\
&=&\sum_{t=1}^r\val_{R_t}(\gamma_t)[k(R_t):k_m]\mod 2.
\end{eqnarray*}
But the RHS is even by (\ref{eq:valgamma}), hence $\partial(\gamma)=0$. Therefore we can always find $\theta\in K'^\times\backslash\AA^\times_{K'}$ such that $\Nm(\theta)=\gamma$. Translating back into geometry, we have found the desired $(\calG,s,\rho)$ and hence we can glue $(\calF_0'\otimes\calG,h_0\otimes s)$ with $(R_a(\calO_E\otimes_kk_m),\sigma_R\circ\gamma)$ over $U'_{x_0}$ to get the desired $(\calF',h)$. The isomorphism $\iota:\calF'|_{\Spec\calO_{Y'_{a},x_0}}\cong R_a(\calO_E\otimes_kk_m)$ is tautologically given by the gluing.
\end{proof}

Now we pick such a triple $(\calF',h,\iota)$ from the above Claim. By construction, we have
\begin{equation*}
\chi(Y_a',\calF')=-2n(g-1)-\val_{F\otimes_kk_m}(\Delta_{a,b})/2
\end{equation*}

\begin{claim}
There is a homomorphism $\alpha':\calO_{Y_a'}(-D_0)\to\calF'$ such that the composition
\begin{equation*}
\iota\circ\alpha':R_a(\calO_E\otimes_kk_m)\cong\calO_{Y_a',x_0}\xrightarrow{\alpha'}\calF'|_{\Spec\calO_{Y'_{a},x_0}}\xrightarrow{\iota}R_a(\calO_E\otimes_kk_m)
\end{equation*}
is the identity modulo $\varpi^N$.
\end{claim}
\begin{proof}
Consider the following evaluation map at the $N^{\textup{th}}$ infinitesimal neighborhood of $p_a'^{-1}(x'_0)\subset Y_a'$:
\begin{equation*}
\ev:\calF'(D_0)\to\calF'(D_0)|_{\Spec\calO_{Y'_a,x_0}}\xrightarrow{\iota}R_a(\calO_E\otimes_kk_m)\otimes_{\calO_F}(\calO_F/\varpi^N).
\end{equation*}
Let $\calK$ be the kernel of $\ev$, which is a coherent sheaf on $Y_a'$. By Grothendieck-Serre duality, we have
\begin{equation*}
H^1(Y_a',\calK)\cong\Hom_{Y_a'}(\calK,\omega_{Y_a'})^\vee.
\end{equation*}
But since
\begin{eqnarray*}
\chi(Y_a',\calK)&=&\chi(Y_a',\calF'(D_0))-2nN\\
&=&-\val_{F\otimes_kk_m}(\Delta_{a,b})/2+2n(\deg(D_0)-g+1)-2nN\\
&\geq&2n(\deg(D_0)-g+1)-(2n+1)N\\
&\geq&2g_Y>g_Y'-1=\chi(Y_a',\omega_{Y_a'}),
\end{eqnarray*}
we must have $\Hom(\calK,\omega_{Y_a'})=0$, therefore $H^1(Y_a',\calK)=0$. This implies that
\begin{equation*}
\ev:\Hom_{Y_a'}(\calO_{Y_a'}(-D_0),\calF')=H^0(Y_a',\calF'(D_0))\to R_a(\calO_E\otimes_kk_m)\otimes_{\calO_F}(\calO_F/\varpi^N)
\end{equation*}
is surjective. Hence there exists $\alpha'\in\Hom_{Y_a'}(\calO_{Y_a'}(-D_0),\calF')$ such that $\iota\circ\ev(\alpha')\equiv1\mod\varpi^N$.
\end{proof}

Now let $\gamma'$ be the composition
\begin{equation*}
\calO_{Y_a'}(-D_0)\xrightarrow{\alpha'}\calF'\xrightarrow{h}\sigma^*\calF'^\vee\xrightarrow{\sigma^*\alpha'^\vee}\omega_{Y_a'}(D_0).
\end{equation*}
Then $\gamma'$ gives back another $b'\in\calB(k_m)$. From the construction it is clear that $b'\equiv b\mod\varpi^N$, therefore $b'\in\calB_0(k_m)$. Now for each closed point $x\neq x_0$ of $X_m$, the local moduli space $\calN^x_{a_x,b'_x}(k_m(x))$ is nonempty because it contains a point given by $\calF'|_{\Spec\calO_{Y_a',x}}$ (this is self-dual because $h$ is an isomorphism over $x\neq x_0$ by construction). Therefore, the pair $(a,b')$ satisfies the requirement of the Lemma.
\end{proof}

\subsection{The proof}
Now for each $m\geq m_0$, we have a pair $(a,b)\in\calA'_0(k_m)\times\calB_0(k_m)$ such that the condition in Lemma \ref{l:nonvan} holds. Using Theorem \ref{th:match}, taking the stalks of the two complexes in (\ref{eq:KL}) at the point $(a,b)$, we get an isomorphism of graded $\Frob^m_k$ modules:
\begin{equation}\label{eq:globalH}
\bigoplus_{i=-d}^{d}H^\bullet(\geom{\calM_{i,a,b}}{k_m},L_{d-i})\cong H^\bullet(\geom{\calN_{a,b}}{k_m},\Ql).
\end{equation}
By the product formulae (\ref{eq:pdGL}) and (\ref{eq:pdU}), we can rewrite (\ref{eq:globalH}) as
\begin{eqnarray*}
&&\bigotimes_{x\in|X_{m}|}\left(\bigoplus_{i_x}H^\bullet\left(\left(\Res_{k_m(x)/k_m}\calM^x_{i_x,a_x,b_x}\right)\otimes_{k_m}\overline{k},\Res_{k_m(x)/k_m}L_x^{\otimes i_x}\right)\right)\\
\label{eq:tensorR}
&\cong &\bigotimes_{x\in|X_{m}|}H^\bullet\left(\left(\Res_{k_m(x)/k_m}\calN^x_{a_x,b_x}\right)\otimes_{k_m}{\overline{k}},\Ql\right).
\end{eqnarray*}
Here $\Res_{k_m(x)/k_m}L_x$ is the local system of rank one on $\Res_{k_m(x)/k_m}\Spec k_m(x)$ induced from $L_x$. We have such a tensor product decomposition on the LHS because the local system $L_{d-i}$ on $\calM_{i,a,b}$, when pulled-back via the isomorphism (\ref{eq:pdGL}), becomes $\boxtimes_{x\in|X_{m}|}\Res_{k_m(x)/k_m}L_x^{\otimes i_x}$ on $\prod_{x\in|X_m|}\Res_{k_m(x)/k_m}\calM^{x}_{i_x,a_x,b_x}$ (cf. Lemma \ref{l:L}).

We use the following abbreviations
\begin{eqnarray*}
M^j_x&:=&\bigoplus_{i_x}H^j\left(\left(\Res_{k_m(x)/k_m}\calM^x_{i_x,a_x,b_x}\right)\otimes_{k_m}\overline{k},\Res_{k_m(x)/k_m}L_x^{\otimes i_x}\right)\\
N^j_x&:=&H^j(\left(\Res_{k_m(x)/k_m}\calN^x_{a_x,b_x}\right)\otimes_{k_m}\overline{k},\Ql).\\
M^j_0&:=&\bigoplus_{i=0}^{\val_F(\Delta_{a^0,b^0})}H^j\left(\geom{\Mloc_{i,a^0,b^0}}{k},\Ql(\eta_{k'/k})^{\otimes i}\right);\\
N^j_0&:=&H^j(\geom{\Nloc_{a^0,b^0}}{k},\Ql).
\end{eqnarray*}
where the first two are $\Frob_k^m$-modules and the last two are $\Frob_k$-modules.

By Proposition \ref{p:locconst}, we have
\begin{equation*}
M^j_{x_0}\cong M^j_0;\hspace{1cm}N^j_{x_0}\cong N^j_0
\end{equation*}
as $\Frob^m_k$-modules.

On the other hand, since we have assumed that $Y_a$ is smooth away from $p_a^{-1}(x_0)$, for any $x\neq x_0$, the local moduli spaces $\calM^x_{i_x,a_x,b_x}$ and $\calN^x_{a_x,b_x}$ are zero-dimensional, hence no higher cohomology. For $x\neq x_0$, we write $M_x$ (resp. $N_x$) for $M^0_x$ (resp. $N^0_x$). Therefore for each $j$, we get an isomorphism of $\Frob^m_k$-modules:
\begin{equation*}
M^j_0\otimes\left(\bigotimes_{x_0\neq x\in|X_m|}M_x\right)\cong N^j_0\otimes\left(\bigotimes_{x_0\neq x\in|X_m|} N_x\right).
\end{equation*}

Taking the traces of $\Frob^m_k$, and using the Lefschetz trace formula for $\calM^x_{i_x,a_x,b_x}$ and $\calN^{x}_{a_x,b_x}$, we get for any $j\geq0$,
\begin{eqnarray}\label{eq:jcount}
&&\Tr(\Frob^m_k,M^j_0)\prod_{x_0\neq x\in|X_m|}\left(\sum_{i_x}\eta_x(\varpi_x)^{i_x}\#\calM^{x}_{i_x,a_x,b_x}(k_m(x))\right)\\
&=&\Tr(\Frob^m_k,N^j_0)\prod_{x_0\neq x\in|X_m|}\#\calN^{x}_{a_x,b_x}(k_m(x)).
\end{eqnarray}
Here we used (\ref{eq:eta}). Since $Y_a$ is smooth away from $p_a^{-1}(x_0)$, for $x\neq x_0$, $R_{a_x}\cong\calO_{Y_a,x}$ is a product of DVRs, and we can apply Lemma \ref{l:DVR} to conclude that
\begin{equation*}
\sum_{i_x}\eta_x(\varpi_x)^{i_x}\#\calM^{x}_{i_x,a_x,b_x}(k_m(x))=\#\calN^{x}_{a_x,b_x}(k_m(x)),\forall x\neq x_0.
\end{equation*}
Moreover, the RHS for each $x\neq x_0$ is nonzero, because $(a,b)$ satisfies the condition in Lemma \ref{l:nonvan}. Therefore (\ref{eq:jcount}) implies
\begin{equation*}
\Tr(\Frob^m_k,M^j_0)=\Tr(\Frob^m_k,N^j_0),\forall j\geq0.
\end{equation*}

Since this is true for any $m\geq m_0$, we get an isomorphism of semi-simplified $\Frob_k$-modules
\begin{equation*}
M^{j,\textup{ss}}_0\cong N^{j,\textup{ss}}_0.
\end{equation*}
But by Lemma \ref{l:locisom}, $M^j_0$ and $N^j_0$ are isomorphic as $\Frob^2_k$-modules. Therefore we can conclude that $M^j_0\cong N^j_0$ as $\Frob_k$-modules since the unipotent part of the $\Frob_k$ action is uniquely determined by that of $\Frob_k^2$ by taking the square root. This proves the main theorem.


\begin{thebibliography}{99}

\bibitem{AK}
Altman, B.; Kleiman, S. Compactifying the Picard scheme.  Adv. in Math.  35  (1980), no. 1, 50--112.


\bibitem{BNR}
Beauville, A.; Narasimhan, M.; Ramanan, S. Spectral curve and the generalized theta divisor, J. Reine Angew. Math. 398, (1989), 169-179.

\bibitem{BBD}
Beilinson, A.; Bernstein, J.; Deligne, P. Faisceaux pervers. {\em Analysis and topology on singular spaces, I} (Luminy, 1981),  5--171, Ast\'erisque, 100, Soc. Math. France, Paris, 1982.

\bibitem{Be}
Bezrukavnikov, R. The dimension of the fixed point set on affine flag manifolds, Math. Res. Lett. 3(1996), no.2, 185--189.

\bibitem{GKM0}
Goresky, M.; Kottwitz, R.; MacPherson, R. Homology of affine Springer fibers in the unramified case.  Duke Math. J.  121  (2004),  no. 3, 509--561.

\bibitem{GKM}
Goresky, M.; Kottwitz, R.; MacPherson, R. Codimensions of root valuation strata. arXiv:math/0601197..

\bibitem{GM}
Goresky, M.; MacPherson, R. Intersection homology. II.  Invent. Math. 72  (1983), no. 1, 77--129.

\bibitem{FGA}
Grothendieck, A. Techniques de construction et th\'eor\`emes d'existence en g\'eom\'etrie alg\'ebrique. IV. Les sch\'emas de Hilbert. S\'eminaire Bourbaki, Vol. 6,  Exp. No. 221, 249--276, Soc. Math. France, Paris, 1995.

\bibitem{JR}
Jacquet, H.; Rallis, S. On the Gross-Prasad conjecture for the unitary group in 3 variables. available at http://www.math.columbia.edu/~hj.


\bibitem{LN}
Laumon, G.; Ng\^o, B-C. Le lemme fondamental pour les groupes unitaires. Ann. of Math. (2) 168 (2008), no. 2, 477--573.

\bibitem{NgoHit}
Ng\^{o}, B-C. Fibration de Hitchin et endoscopie. Invent. Math. 164 (2006), no.2, 399--453.

\bibitem{NFL}
Ng\^o, B-C. Le lemme fondamental pour les algebres de Lie. arXiv:0801.0446.

\bibitem{Z}
Zhang, W. Private communication.

\end{thebibliography}
\end{document}